
\documentclass[reqno,12pt]{amsart}
\usepackage{amssymb,amsmath,amsthm,amstext,amsfonts,color}
\usepackage[dvips]{graphicx}
\usepackage[ansinew]{inputenc}
\usepackage{a4wide}
\usepackage{psfrag}

\newcommand{\qand}{\quad\text{and}\quad}

\newcommand{\const}{{\rm const\,}}

\theoremstyle{plain}
\newtheorem{maintheorem}{Theorem}

\newtheorem{maincorollary}{Corollary}

\newtheorem{theorem}{Theorem }[section]
\newtheorem{proposition}[theorem]{Proposition}
\newtheorem{lemma}[theorem]{Lemma}

\newtheorem{question}{Question}
\newtheorem{conjecture}{Conjecture}

\theoremstyle{definition} \theoremstyle{remark}
\newtheorem{remark}[theorem]{Remark}

\newtheorem{definition}[theorem]{Definition}

\newcommand{\field}[1]{\mathbb{#1}}

\newcommand{\RR}{\field{R}}

\newcommand{\ZZ}{\field{Z}}

\newcommand{\diam}{\operatorname{diam}}
\newcommand{\dist}{\operatorname{dist}}
\newcommand{\supp}{\operatorname{supp}}

\newcommand{\sgn}{\operatorname{sgn}}

       \newcommand{\De}{\Delta}

\renewcommand{\epsilon}{\varepsilon}

\newcommand{\la} {\lambda}

\newcommand{\si} {\sigma}

\newcommand{\vfi}{\varphi}

\newcommand{\cH}{{\mathcal H}}

\newcommand{\cP}{\mathcal{P}}
\newcommand{\cL}{\mathcal{L}}

\newcommand{\cQ}{\mathcal{Q}}
\newcommand{\cF}{\mathcal{F}}

\newcommand{\cU}{\mathcal{U}}

\def \fX {{\mathfrak X}}

\newcommand{\Leb}{\mbox{Leb}}

\newcommand{\ov}{\overline}
\newcommand{\R}{{\mathbb R}}

\begin{document}

\title[Robust exponential decay of correlations]
{Robust exponential decay of correlations for singular-flows}

\author{V\'itor Ara\'ujo and Paulo Varandas} 

\address{V\'itor Ara\'ujo, Departamento de Matem\'atica,
  Universidade Federal da Bahia\\
  Av. Ademar de Barros s/n, 40170-110 Salvador, Brazil.}

\email{vitor.araujo@pq.cnpq.br}

\address{Paulo Varandas, Departamento de Matem\'atica, 
  Universidade Federal da Bahia\\
  Av. Ademar de Barros s/n, 40170-110 Salvador, Brazil.}

\email{paulo.varandas@ufba.br}

\thanks{Part of this work was done during P.V. postdoctoral
  period at UFRJ-Rio de Janeiro with the finantial support
  of FAPERJ (Brazil-Rio de Janeiro).  P.V was partially
  supported by FAPESB. V.A. was partially supported by CNPq,
  PRONEX-Dyn.Syst.  and FAPERJ}

\date{\today}

\begin{abstract}
  We construct open sets of $C^k$ ($k\ge 2$) vector fields
  with singularities that have \emph{robust exponential
    decay of correlations} with respect to the unique
  physical measure.  In particular we prove that the
  geometric Lorenz attractor has exponential decay of
  correlations with respect to the unique physical measure.
\end{abstract}

\keywords{robust exponential decay of correlations,
  geometric Lorenz flow, uniform non integrability, induced
  uniformly expanding Markov map, hyperbolic skew-product
  semiflow, smooth stable foliation}

\subjclass{
37D30, 37A25,
37C10
}

\maketitle

\tableofcontents

\section{Introduction}

The ongoing interest in the mixing properties of deterministic
dynamical systems was strongly inspired by the relevance of the
subject in statistical mechanics. Moreover, the mixing properties
of any given equilibrium state usually require deep knowledge
of the system's chaotic features given for instance by its Lyapunov exponents. 

Thermodynamical formalism was brought into the realm of dynamical 
systems by works of Sinai, Ruelle and  Bowen~\cite{Si72,Bo75,Ru76}. In fact,
since uniformly hyperbolic maps are semi-conjugated to subshifts of finite type
(via Markov partitions), any transitive uniformly hyperbolic map has a unique
equilibrium state $\mu_\phi$ for every H\"older continuous potential
$\phi$ with good mixing conditions:  the correlation function
\begin{align}\label{eq:defcorrfunction}
  C_n(\vfi,\psi) = \Big|\int\vfi\cdot\big(\psi\circ f^n\big)
  \, d\mu_\phi -\int\vfi\,d\mu_\phi\int \psi\, d\mu_\phi\Big|
\end{align}
decays exponentially fast among H\"older continuous observables. 
Roughly, the robust chaotic features of uniformly hyperbolic dynamics  are
responsible by the very strong mixing properties.
An extension of the thermodynamical formalism for uniformly
hyperbolic (Axiom A) flows was also possible by the construction of finite 
Markov partitions obtained by Bowen and Ruelle in~\cite{BR75}. 


Even though uniformly hyperbolic (Axiom A) flows are
semi-conjugated to suspension semiflows over subshifts of
finite type, the mixing properties for time-continuous
dynamical systems turned out to be much more subtle than the
discrete time setting. On the one hand, hyperbolic
suspension flows by a constant roof function are never
topologically mixing despite the exponential mixing rate of
the base transformation.  {On the other
  hand, the general feeling that topologically mixing
  uniformly hyperbolic flows should enjoy exponential decay
  of correlations was promptly put down by the
  counterexample of Ruelle~\cite{ruelle1983}, showing that
  there are topologically mixing Axiom A flows without
  exponential decay of correlations. In fact, examples of
  hyperbolic flows with arbitrary slow rate of decay of
  correlations were given by Pollicott~\cite{Pol85}.}
Hence, despite several recent contributions, a complete
understanding of the mixing properties for uniformly
hyperbolic flows is still far from complete.

For a more detailed description of the state of the art let
us mention that the proof of exponential decay of
correlations for geodesic flows on manifolds of constant
negative curvature was first obtained in two dimensions by
Collet-Epstein-Gallavotti~\cite{CEG84} and then in three
dimensions by Pollicott \cite{pollicott92} through group
theoretical arguments.

Much more recently, Chernov~\cite{chernov98} and
Dolgopyat~\cite{Do98} studied Anosov flows and
Liverani~\cite{liverani2004} extended such results for
contact flows. Still in the uniformly hyperbolic context,
Pollicott~\cite{pollicott99} extended the results in
\cite{Do98} to study the decay rate of equilibrium states
associated to H\"older continuous potentials and Anosov
flows.  {We should also refer that Dolgopyat
  \cite{dolgopyat98} proved that typical Axiom A flows (in a
  probabilistic sense) have superpolynomial decay of
  correlations.  In a more recent contribution, Field,
  Melbourne and T\"orok~\cite{FMT} proved that in fact a
  $C^2$-open and $C^\infty$-dense set of Axiom A flows have
  superpolynomial decay of correlations. Hence, Axiom A
  flows have robust fast mixing.} This raises the following
natural question.

  \begin{question}
    \label{ques:robust-exp-decai-AxiomA}
    Is there some hyperbolic (non-singular) flow with robust
    exponential decay of correlations?
  \end{question}

%
%

  Rather surprisingly, there are some recent evidences that
  the presence of singularities appears as one important
  mechanism to obtain topologically and measure theoretical
  mixing. Roughly, orbits that approach singularities
  accelerate differently causing displacements of different
  order in the flow direction.

  One of the most emblematic examples of flows where
  singular and regular behavior coexist are Lorenz
  attractors. {In the mid seventies
  Afraimovich, Bykov and Shil'nikov \cite{ABS77} and
  Guckenheimer and Williams \cite{GW79}
  introduced independently the geometric Lorenz attractors 
  to model the original Lorenz attractors.}
  The rigorous proof for the existence of the
  non-uniform hyperbolic Lorenz attractor for the parameter
  values originally suggested by E. Lorenz was obtained by
  Tucker~\cite{Tu99} in the end of the 1990's. In addition,
  there was a general feeling that the Lorenz attractor
  should have a unique physical measure with exponential
  decay of correlations.

  In more recent works there were several advances in that
  direction. First Luzzatto, Melbourne, Paccaut \cite{LMP05}
  proved that the geometric models for the Lorenz attractor
  are topologically mixing with respect to the unique
  physical measure. Then Ara\'ujo, Pac\'ifico, Pujals,
  Viana~\cite{APPV} guarantee that every singular hyperbolic
  attractor (a class which contains the Lorenz and
  geometrical Lorenz attractors) carries a unique physical
  measure whose basin of attraction covers Lebesgue almost
  every point. { Recently,
    in~\cite{galapacif09,ArGalPac}, exponential decay of
    correlation for the Poincaré return map to suitably
    chosen cross-sections of geometric Lorenz flows
    and for the general case of singular-hyperbolic flows
    has been obtained.}  However the question about the
  exponential decay rate of the flow on this class of
  attractors remained open.


  More recent developments include a criterium given by
  Baladi, Vall\'ee \cite{BaVal2005} and \'Avila, Go\"uezel,
  Yoccoz \cite{AvGoYoc} to deduce exponential decay of
  correlations for suspension flows over $C^2$ Markov maps.
  The fact that here Markov partition may admit countably
  many elements implies that those classes of systems
  contain many important examples in non-uniformly
  hyperbolic dynamics as the suspension semiflows of the
  Maneville-Pommeau map, the H\'enon maps, and other classes
  of flows as 
  singular-hyperbolic attractors
  (e.g. the Lorenz or geometrical Lorenz
  attractor). {A similar approach was pursued
    in~\cite{bufetov06} by Bufetov to obtain streched
    exponential decay of correlations for the Teichm\"uller
    flow on the space of Abelian differentials.}

  Our purpose here is to contribute to the ergodic theory of
  singular flows and to construct a nonempty open subset of singular
  flows with exponential decay of correlations. Let us
  mention that {Question~1} is not answered in the
  uniformly hyperbolic context. Such class of flows,
  including the geometric Lorenz attractor, combine
  hyperbolic behavior with the existence of singularities.

Our strategy is to construct Lorenz attractors whose
associated one-dimensional piecewise expanding
transformation is twice differentiable and, hence, admits a
countable Markov partition as in the above setting. Then, we prove
that these flows are conjugated to suspension flows over a
base with an hyperbolic structure and such that the height
function satisfies a \emph{uniform non-integrability}
condition as introduced by Dolgopyat. Then we use a criteria
from \cite{BaVal2005,AvGoYoc} to deduce that such flows have
exponential decay of correlations. Moreover, using
\cite{MeTo04,HoMel} we are also able to prove that these
flows satisfy the central limit theorem.

Two final comments are in order. First let us mention that
the Lorenz attractors associated to the original parameters
obtained by E. Lorenz in \cite{Lo63} do not verify our
assumption, and so the question of exponential decay of
correlations for the original Lorenz attractor and small
perturbations of it remains open. The second comment is that
we expect that a similar approach may be applied
to other equilibrium states.


\begin{question}\label{quest1}
  Do the equilibrium states constructed by Pac\'ifico and
  Todd~\cite{PT09} for the contracting Lorenz attractor have
  exponential decay of correlations?
\end{question}

We finish with the following  conjectures on the decay of correlations for
general robustly transitive flows.  

\begin{conjecture}\label{conj1}
  Non-hyperbolic robustly {mixing} flows in
  three-dimensional manifolds have robust exponential decay
  of correlations.
\end{conjecture}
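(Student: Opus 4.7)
The plan is to attack Conjecture \ref{conj1} by reducing it, via the Morales--Pac\'ifico--Pujals classification of robustly transitive sets on three-manifolds, to an application of the framework developed earlier in this paper. First I would invoke that classification to conclude that a robustly mixing non-hyperbolic flow on a compact $3$-manifold must be singular-hyperbolic: all its equilibria are Lorenz-like, and the maximal invariant set carries a dominated splitting $E^s\oplus E^{cu}$ with uniformly contracting $E^s$ and volume-expanding $E^{cu}$, as in \cite{APPV}. Robust mixing, combined with this singular-hyperbolic structure, should then place the dynamics within the scope of the Lorenz-like geometry treated in the body of the paper.

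Next I would build, for each vector field in a $C^k$-neighbourhood, a global Poincar\'e cross-section $\Xi$ transverse to the flow together with a return map $R:\Xi\to \Xi$ endowed with an invariant stable foliation; quotienting along stable leaves yields a one-dimensional piecewise expanding map $f$ with return time $\tau$. The goal is to verify, \emph{uniformly} in the perturbation, the three hypotheses on which our main theorem rests: a countable $C^2$-Markov structure for $f$, exponential integrability of $\tau$, and the uniform nonintegrability (UNI) condition on the ceiling function. The first two should follow from the singular-hyperbolic structure together with Lorenz-like asymptotics near the equilibria; smoothness of the quotient will require a bunching-type argument to upgrade the a priori H\"older stable holonomy to $C^{1+\alpha}$, which is delicate but of the same flavour as in our constructions.

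The core obstacle I expect is the robust verification of UNI. In the present paper UNI is checked by an explicit computation exploiting the affine/Lorenz model near the singularity, and it is far from clear that analogous estimates survive arbitrary $C^k$-small perturbations within the class of robustly mixing non-hyperbolic flows. I would try to formulate UNI as an \emph{open} condition in terms of a pair of periodic orbits whose Birkhoff sums of $\tau$ have different derivatives along the unstable direction, and then show that failure of UNI defines a closed set of infinite codimension in the space of vector fields. Combining this with the $C^1$-openness of singular-hyperbolicity should yield a $C^k$-open, $C^\infty$-dense set of robustly mixing non-hyperbolic flows with exponential decay of correlations, in the spirit of Field--Melbourne--T\"orok.

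Finally, the gap between \emph{dense} and \emph{everywhere} is the deepest point of the conjecture. Closing it presumably requires a rigidity statement asserting that among robustly mixing singular-hyperbolic flows UNI cannot fail on a $C^k$-open subset; I would attempt this through a dichotomy argument of Livshits type, showing that failure of UNI on an open set forces a cohomological relation between $\tau$ and a smooth coboundary, which in turn would contradict the variation of return times forced by the Lorenz-like behaviour near any equilibrium of a robustly mixing flow.
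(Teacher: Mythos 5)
This statement is an open \emph{conjecture} in the paper, not a theorem: the authors explicitly leave it unproved, and indeed they point out both that the original Lorenz parameters do not satisfy their hypotheses and that it is not yet known whether all singular-hyperbolic flows are even topologically mixing. So there is no proof in the paper against which to compare your argument, and your proposal should be read as a research programme rather than a proof.

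As such a programme, it is sensible in outline but has several genuine gaps, some of which you flag yourself. First, reducing the conjecture to the framework of the paper via Morales--Pac\'ifico--Pujals does not close the loop: the paper's Theorem~\ref{mthm:Lorenz-hyp-skew} and Corollary~\ref{mcor:Lorenz-exp-decay} only apply to a specifically constructed open set $\cU$ of geometric Lorenz flows satisfying strong dissipativity $\beta>\alpha+k$ (to get a $C^k$ stable foliation) together with non-resonance conditions at the singularity guaranteeing smooth linearization. A general robustly mixing non-hyperbolic $3$-flow is singular-hyperbolic, but it need not satisfy either of these extra open conditions; in particular the stable foliation of the Poincar\'e return map is in general only H\"older, and upgrading it to $C^{1+\alpha}$ or $C^2$ is not a ``bunching-type'' refinement that works generically --- it is precisely what fails for the classical Lorenz parameters. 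Second, your strategy for UNI converts a pointwise cohomological condition into an ``open, infinite codimension'' statement, but you give no mechanism that would make failure of UNI closed of infinite codimension; the Livshits-type dichotomy in your last paragraph is exactly the missing step, and as stated it is an assertion rather than an argument. Finally, even if all of the above were carried out, you would obtain the result on a $C^k$-open and $C^\infty$-dense subset, as you acknowledge; passing from density to ``all'' robustly mixing non-hyperbolic flows is the content of the conjecture itself, so the proposal does not resolve it.
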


It is known that robustly transitive flows in dimension
three are singular-hyperbolic, that is, are partially
hyperbolic with one-dimensional contracting direction and a
two-dimensional volume expanding direction; see
e.g. \cite{MPP04}. Moreover, if there are no singularities
then the flow is uniformly hyperbolic and the decay of
correlations of the SRB measure can be arbitrarily slow.
See e.g. \cite{AraPac2010} for a rather complete description
of the state of the art. {However, it is not yet known 
wether all singular-hyperbolic flows are topologically mixing.}
Therefore the previous conjecture states that singularities are a 
mechanism to generate robust
exponential decay of correlations in dimension three. Indeed
we believe that it is possible to remove the regularity
assumption.

\begin{conjecture}\label{conj2}
  $C^{1+\alpha}$ Lorenz attractors have robust exponential
  decay of correlations.
\end{conjecture}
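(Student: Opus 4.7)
The plan is to adapt the strategy of the paper's main construction to the lower regularity $C^{1+\alpha}$ setting. Since singular-hyperbolicity is $C^1$-robust in dimension three (Morales--Pac\'ifico--Pujals), a $C^{1+\alpha}$-open set of vector fields exhibiting the Lorenz-like geometry will share the partially hyperbolic splitting $E^{ss}\oplus E^{cu}$ together with volume expansion on the two-dimensional centre-unstable bundle. The first step is to construct, uniformly on this neighborhood, a cross-section $\Sigma$ transverse to the flow, the associated Poincar\'e return map $R:\Sigma\to\Sigma$ and return time $\tau:\Sigma\to\RR^+$. Quotienting along the strong-stable foliation $\cF^{ss}|_\Sigma$ should then yield a one-dimensional uniformly expanding map $f:I\to I$ with a countable Markov partition and a descended roof function $\bar\tau:I\to\RR^+$.

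The second step is to verify, exactly as in the main theorem of the paper, the hypotheses of the Baladi--Vall\'ee and \'Avila--Gou\"ezel--Yoccoz criterion for the suspension semiflow over $(f,\bar\tau)$: namely (i) $f$ is piecewise expanding with good distortion bounds, (ii) the holonomies of $\cF^{ss}$ are absolutely continuous with sufficiently regular Jacobian, so that the reduction to a skew-product is legitimate and the quotient is meaningful, and (iii) $\bar\tau$ satisfies Dolgopyat's uniform non-integrability condition (UNI). Exponential decay of correlations for the suspension semiflow would then transfer to the flow on the attractor by the same approximation argument used in the main theorem, and the robustness under $C^{1+\alpha}$ perturbations would follow from the openness of each of the conditions (i)--(iii) in that topology.

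The principal obstacle is the regularity of the strong-stable foliation and its holonomies. In the $C^k$ setting with $k\geq 2$ one uses the bunching condition to guarantee that $\cF^{ss}$ is itself $C^{1+\epsilon}$, from which the main theorem produces a one-dimensional quotient of class $C^2$; in the $C^{1+\alpha}$ setting $\cF^{ss}$ is a priori only H\"older with H\"older holonomies, which is generically insufficient to render the quotient map $C^{1+\alpha'}$ in the strong sense required by the AGY criterion. Overcoming this would likely require either a direct transfer-operator analysis on anisotropic Banach spaces of functions of two variables, bypassing the one-dimensional quotient entirely, or a refined geometric argument exploiting the specific Lorenz centre-unstable dynamics to show that the holonomy Jacobian is better than what generic H\"older bounds would predict. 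A secondary but more tractable issue is verifying UNI together with the temporal distance function estimates robustly in the $C^{1+\alpha}$ topology; once the skew-product structure is in place this should be accessible by adapting the twist arguments of the paper, since UNI is an \emph{open} condition on the roof function.
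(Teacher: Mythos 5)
The statement you are addressing is not proved in the paper: it is stated as Conjecture~\ref{conj2} precisely because the authors' argument for Theorem~\ref{mthm:Lorenz-hyp-skew} relies on the strong dissipativity hypothesis $\beta>\alpha+k$ with $k\ge2$, which (via $k$-normal hyperbolicity) yields a $C^k$ strong-stable foliation and hence a $C^2$ one-dimensional quotient map $f$. That $C^2$ regularity of $f$ is what makes the induced map $F=f^R$ a $C^2$ uniformly expanding Markov map satisfying the Renyi condition, which in turn gives a $C^1$ invariant density, a $C^1$ disintegration, and the good roof function estimates needed for the Baladi--Vall\'ee / \'Avila--Gou\"ezel--Yoccoz criterion. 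Your proposal correctly reconstructs this architecture, and your diagnosis of the obstruction in the $C^{1+\alpha}$ case is exactly right: without the dissipativity hypothesis, $\cF^{ss}$ is a priori only H\"older with H\"older holonomies, so the quotient $f$ fails to be $C^{1+\alpha'}$ in the sense required, and the whole chain (Renyi condition, $C^1$ density, smooth disintegration, bound on $\sup_h\|D(r\circ h)\|_0$) collapses.

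The problem is that having identified the obstacle, you do not remove it. The two routes you suggest --- anisotropic Banach spaces of two-variable observables bypassing the quotient, or a finer geometric estimate on the holonomy Jacobian --- are named but not developed, and your treatment of UNI in the low-regularity setting is an assertion of accessibility rather than a verification (UNI is an open condition, but one still has to produce a single $C^{1+\alpha}$ Lorenz flow where it holds with only H\"older data, and check that the derivative computations in the paper's Section~\ref{sec:uniform-non-integr} survive when $Dr$ and $DF$ are merely H\"older). What you have written is therefore a correct account of why the conjecture is plausible and where the difficulty lies, but it is a research program, not a proof. There is no gap to locate relative to the paper's argument, because the paper offers none; the gap you yourself flag in your third paragraph is the open problem.
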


The reduction to a suspension semiflow over a non-uniformly
expanding base transformation can also be performed in a
higher dimensional class of examples; see
\cite{BPV97}. These are just a particular example of a
sectional-hyperbolic attractor; see \cite{MeMor06}. The
notion of sectional-hyperbolicity generalizes the notion of
hyperbolicity for singular flows in any dimension and
contains, in particular, the class of singular hyperbolic
attractors in $3$-manifolds.

\begin{conjecture}
  \label{conj3}
  Smooth singular flows in higher dimensions which are
  sectionally-hyperbolic exhibit robust exponential decay of
  correlations.
\end{conjecture}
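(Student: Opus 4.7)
The plan is to generalize the strategy used in this paper for three-dimensional geometric Lorenz attractors to the higher-dimensional sectional-hyperbolic setting. First, following the framework developed in \cite{BPV97} and the general theory of sectional-hyperbolicity in \cite{MeMor06}, I would construct a (possibly disconnected) global Poincar\'e cross-section $\Xi$ transverse to the flow, adapted to the splitting $E^s\oplus E^{cu}$, on which the first-return map $R$ admits a dominated splitting with uniformly contracting stable bundle and volume-expanding central-unstable bundle. The goal is then to quotient $R$ along the stable leaves to obtain a piecewise $C^2$ non-uniformly expanding map $\bar R$ on a base of dimension $\dim E^{cu}$, which admits a countable Markov partition with bounded distortion and integrable first-return times, thereby placing the system in the framework of \cite{BaVal2005,AvGoYoc}.

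Once this reduction is in hand, the flow should be represented, up to H\"older conjugacy, as a suspension semiflow over a hyperbolic skew-product whose base is the induced Markov map built from $\bar R$, with roof function given by the return time (which necessarily blows up near the singular set). The core analytic step is to verify a multidimensional analogue of Dolgopyat's uniform non-integrability (UNI) condition for this roof: one needs to show that the temporal-distortion cocycle associated with stable holonomies has sufficiently non-degenerate derivatives along enough central-unstable directions to force cancellations in the twisted transfer operator at high imaginary frequency. Granted UNI, the Markov-base criterion of \'Avila--Gou\"ezel--Yoccoz would then yield exponential decay of correlations for H\"older observables, and robustness would follow, exactly as in the three-dimensional case treated here, by showing that the existence of a smooth stable foliation, the countable Markov partition for $\bar R$, and the UNI estimate all persist under $C^k$-perturbations of the vector field.

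The principal obstacle is the regularity of the stable foliation on the cross-section. In dimension three the stable direction is one-dimensional and, under mild bunching, one obtains a $C^{1+\alpha}$ (or even $C^2$) stable foliation, which is essential both for the well-definedness of the smooth quotient map and for differentiating the temporal-distortion function in the UNI estimate. In ambient dimension at least four the central-unstable bundle is itself at least three-dimensional and typically non-conformal, so that obtaining $C^2$-regular stable holonomies requires imposing genuinely stronger bunching or pinching conditions on the tangent cocycle; it is not at all clear that these conditions cut out a non-empty $C^k$-open set inside the sectional-hyperbolic class. A secondary, and in my view equally serious, difficulty is the verification of UNI in a truly multidimensional base, where the temporal distortion is no longer a function of a single expansion coordinate and the available transversality arguments from \cite{Do98,liverani2004,AvGoYoc} must be replaced by a new, higher-dimensional non-degeneracy statement for the roof. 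Because of these two obstructions -- smoothness of the stable foliation and higher-dimensional UNI -- I expect the full conjecture to require substantial new ideas beyond a direct adaptation of the techniques developed here.
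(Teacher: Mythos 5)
This statement is labelled a \emph{conjecture} in the paper, and the paper itself offers no proof of it; it is posed as an open problem motivated by the three-dimensional results established in Theorem~\ref{mthm:Lorenz-hyp-skew} and Corollary~\ref{mcor:Lorenz-exp-decay}. Your proposal is therefore not being measured against any argument of the authors, and you correctly treat it not as a theorem to be verified but as a research program: reduce the flow to a suspension semiflow over a hyperbolic skew-product with a countable Markov base in the spirit of \cite{BaVal2005,AvGoYoc}, show the roof function is good (in particular that a uniform non-integrability estimate holds), and then invoke Theorem~\ref{thm:AvGoYoc}. That is exactly the route the paper uses in dimension three, so your strategy is the natural one.

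What you have written is not, and does not claim to be, a proof: you explicitly flag the two places where the three-dimensional argument does not transfer, and both are the right ones. First, in the present paper the $C^2$ regularity of the one-dimensional strong-stable foliation on the cross-section is obtained from the strong dissipativity condition $\beta>\alpha+k$ via $k$-normal hyperbolicity (Theorem~\ref{thm:smooth-ss}); when the stable bundle has dimension $\ge 2$ and the central-unstable bundle is non-conformal, bunching/pinching conditions strong enough to yield $C^2$ holonomies may cut out an empty set inside the sectionally-hyperbolic class, and this is not addressed. Second, the UNI argument in Subsection~\ref{sec:uniform-non-integr} is genuinely one-variable: it exploits that $D\varrho(x)\approx|x|^{-1}$ while $Df(x)\approx|x|^{\alpha-1}$ near the single codimension-one singular set $\Gamma$, and compares scalar growth rates along a sequence $F(x_n)\to 0^+$. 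A multidimensional replacement must produce non-degeneracy of the temporal-distortion function along enough directions of a higher-dimensional expanding base, and no mechanism for that is supplied. Since the statement is posed as a conjecture precisely because these gaps are open, your proposal is an accurate assessment of the state of the problem rather than a solution of it.
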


We believe our main result and its proof can be adapted to
establish limit theorems for the distribution of random
variables generated by the geometric Lorenz system.  On the
one hand, in a recent work, Holland and
Melbourne~\cite{HoMel} used that the geometric Lorenz
attractor is a suspension flow to prove that all Lorenz
attractors satisfy the central limit theorem and invariance
principles, where no condition on the speed of decay of
correlations was necessary. On the other
hand, limit theorems for diffeomorphisms given as time-$t$
maps of flows are harder to obtain.  Notice that even for
uniformly hyperbolic flows the time-one maps are partially
hyperbolic diffeomorphisms. A very general result was
obtained by Melbourne and T\"orok~\cite{MelTor02} under some
assumptions on the decay of correlations for the flow.  We
expect these ideas can be adapted to prove that the strong
mixing properties for the $C^2$-open subset of geometric
Lorenz attractors imply (robust) limit theorems for the
corresponding time-one maps. More precisely we pose the
following:

\begin{conjecture}\label{conj:CLT-Time1}
  Let $\cU\subset \fX^s(M)$ be the open family of vector
  fields for which we prove exponential decay of
  correlations, and denote by $(X_t)_t$ the flow generated
  by $X\in \cU$. For all but countably many values of
  $t\in\mathbb R$ the time-$t$ map $X_t$ the following
  Central Limit Theorem holds: for any $\varphi: \De_r \to
  \R$ in $L^\infty(\De_r)$ there exists
  $\sigma=\sigma(\varphi)>0$ such that
\begin{equation*}\label{CLT1}
  \frac{1}{\sigma \sqrt{n}} 
  \Bigg[\sum_{j=0}^{n-1} \varphi(X_{tn}) - 
  \int \varphi \;d\mu\Bigg] \xrightarrow{\mathcal D}
  \mathcal N(0,1).
\end{equation*}
\end{conjecture}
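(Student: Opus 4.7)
The plan is to combine the main theorem of the paper, which provides robust exponential decay of correlations for the flow generated by $X \in \cU$, with the abstract framework of Melbourne and T\"or\"ok \cite{MelTor02} that transfers statistical limit theorems from a flow to its time-$t$ maps. The countable exceptional set of $t$ in the conclusion is precisely the resonance set intrinsic to that framework, so the structure of the statement matches the output of the Melbourne-T\"or\"ok theorem.

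Concretely, I would first invoke the representation of $X_t$ as a suspension semiflow $\Phi_t : \De_r \to \De_r$ over the hyperbolic skew-product base map $F : \De \to \De$, with roof function $r$ satisfying the uniform non-integrability condition introduced by Dolgopyat; this reduction is the core construction of the paper. Exponential decay of correlations for $\Phi_t$ on H\"older observables is the main theorem. Feeding these two ingredients into the Melbourne-T\"or\"ok machinery yields, for all but countably many $t \in \R$, a CLT for $X_t$ on H\"older observables, where the asymptotic variance $\sigma(\varphi)$ is strictly positive unless $\varphi$ is a measurable coboundary for $X_t$. The countable exceptional set of times consists of those $t$ for which a cohomological obstruction equation $\varphi = \psi \circ X_t - \psi + c$ admits a nontrivial $L^2$ solution; non-degeneracy $\sigma(\varphi)>0$ is then a generic and verifiable condition on $\varphi$, standardly obtained from the mixing of the flow.

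The main obstacle is extending the CLT from H\"older observables to arbitrary $\varphi \in L^\infty(\De_r)$, since both the Gordin martingale argument and the spectral perturbation approach underlying \cite{MelTor02} require some modulus of regularity of the observable. One natural route is to approximate $\varphi \in L^\infty$ by H\"older observables $\varphi_n$ in $L^2(\mu)$ and use the exponential mixing together with the quasi-compactness of a suitable transfer operator (as in \cite{AvGoYoc, BaVal2005}) to control the asymptotic variances $\sigma(\varphi_n) \to \sigma(\varphi)$ and the tails of the Birkhoff sums uniformly in $n$. An alternative is to exploit the smooth stable foliation provided by the suspension structure to quotient $L^\infty$ observables to H\"older observables on the unstable leaves, reducing the $L^\infty$ statement to the H\"older one. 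In either approach, the subtle point to verify is that the countability of the exceptional set of $t$ is preserved through the approximation procedure, rather than inflating to a merely meager or null set.
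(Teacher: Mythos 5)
The statement you are trying to prove is labeled as a \emph{conjecture} in the paper (Conjecture~\ref{conj:CLT-Time1}); the authors explicitly do not prove it, and the sentence immediately preceding it in the text (``We expect these ideas can be adapted\dots'') already identifies the Melbourne--T\"or\"ok route~\cite{MelTor02} as the intended line of attack. So there is no paper proof to compare yours against, and the approach you describe is precisely the one the authors suggest as motivation. What is valuable in your write-up is that you correctly locate the reason why this remains open: the passage from H\"older (or $C^1$) observables to $L^\infty(\widehat\Delta_r)$ observables is not covered by any result in the paper.

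To be more precise about where this gap bites: the only quantitative mixing result available here is Theorem~\ref{thm:AvGoYoc} (and Corollary~\ref{mcor:Lorenz-exp-decay}), which gives exponential decay of correlations for observables in $C^1(\widehat\Delta_r)$, measured in the $C^1$ norm. For an arbitrary $\varphi\in L^\infty$ there is no decay rate at all, so the hypothesis required by the Melbourne--T\"or\"ok machinery (rapid mixing for a suitable class of observables) simply is not available in the $L^\infty$ class. Your first proposed fix---approximating $\varphi\in L^\infty$ by H\"older observables $\varphi_n$ and controlling $\sigma(\varphi_n)\to\sigma(\varphi)$ via quasi-compactness of a transfer operator---does not close the gap, because the transfer operator of~\cite{BaVal2005,AvGoYoc} acts on a Banach space of regular functions and gives you no uniform spectral information as $\varphi_n$ degenerates in the $C^1$ norm; the Birkhoff-sum tail bounds you would need are not uniform in $n$. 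Your second proposed fix (quotienting along the stable foliation) replaces an $L^\infty$ function by another $L^\infty$ function on the base, so the regularity problem is unchanged. Finally, as you yourself flag, both of your routes risk enlarging the exceptional set of times $t$ beyond countable. In short, the $L^\infty$ formulation is stronger than anything the paper's methods deliver, and that is exactly why the statement is a conjecture rather than a theorem.
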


Exploring the same ideas from~\cite{MelTor02}, we believe an
Almost Sure Invariance Principle can also be obtained for
the physical measure of this open set of geometric Lorenz
flows with respect to the time-$1$ map.

The paper is organized as follows. In
Section~\ref{sec:statements} we introduce some preliminary
definitions and give the precise statements of our main
results. In Section~\ref{sec:geometr-lorenz-flow} we
construct $C^2$-open sets of Lorenz attractors with smooth
Lorenz one dimensional transformation. Finally, we prove
that these attractors are conjugated to suspension
semiflows with a good hyperbolic structure and, hence, have
exponential decay of correlations.

\section{Setting and statement of results}\label{sec:statements}

Throughout, let $M$ be a compact Riemannian manifold, let
$d$ denote the induced Riemannian distance in $M$,
$\|\cdot\|$ the Riemannian norm and $\Leb$ the induced
normalized Riemannian volume form. 

We will introduce the setting of induced maps and some
concepts from the thermodynamical formalism of suspension
semiflows. Our main results will be stated by the end of the
section. In what follows we write $\|\cdot\|_0$ for the
$\sup$-norm in various functional spaces.

\subsection{Uniformly expanding Markov map}
\label{sec:unif-exp-markov}

We assume that $\cup_{\ell\in L} \De^{(\ell)}$ is an at most
countable partition (Lebesgue modulo zero) of an open domain
$\De \subset M$ by open subsets and let $F:\cup_{\ell\in L}
\De^{(\ell)} \to \De$ be a $C^r$ uniformly expanding Markov
map, $r\ge2$, that is
\begin{enumerate}
\item $F:\De^{\ell}\to\De$ is a $C^{r}$
  diffeomorphism for every $\ell$;
\item there are $C>0$ and $0<\la<1$ such that 
  \begin{enumerate}
  \item for every inverse branch $h_n$ of $F^n$, with
    $n\ge1$,  $d(h_n(x),h_n(y)) \leq C \la^n d(x,y)$; 
  \item if $JF$ is the Jacobian of $F$ with respect to the
    Lebesgue measure, then $\log JF$ is a $C^1$ function and
    $\sup | D((\log JF)\circ h)|\le C$ for every inverse
    branch $h$ of $F$.
  \end{enumerate}
\end{enumerate}
We denote by $\cH_n$ the family of inverse branches of
$F^n$.  In many applications we have that $\Delta$ is a
finite dimensional topological disk.
It is well known that $F$ admits an invariant probability
measure $\nu$ which is absolutely continuous with respect to
Lebesgue.

\subsection{Hyperbolic skew-product structure}
\label{sec:hyperb-skew-product}

We recall some notions previously used by \cite{BaVal2005}
and \cite{AvGoYoc}. We say that the roof function $r: \De
\to \RR^+$ has \emph{exponential tail} if there exists
$\si_0>0$ such that $\int e^{\si_0 r} d\nu <\infty$.

\begin{definition}\label{def:goodroofunc}
  We say that the roof function $r$ is \emph{good} if
\begin{enumerate}
\item $r$ is bounded from below by some positive constant $r_0$;
\item there exists $C>0$ such that $\sup_{h\in\cH_1}\|D(r\circ
  h)\|_{0}\le C <\infty$;
\item it is not possible to write $r=v+u\circ F -u$ on
  $\De$, where $v: \De \to \R$ is constant on each
  $\De^{\ell}$ and $u:\Delta\to\RR$ is a $C^1$-function.
\end{enumerate}
\end{definition}

\begin{remark}\label{rmk:last-condit-definit}
  The last condition in Definition~\ref{def:goodroofunc}
  above corresponds to the uniform non-integrability, or
  aperiodicity, condition defined by Baladi-Vall\'ee in
  \cite{BaVal2005}. There are a number of equivalent
  conditions to this, as proved in
  \cite[Proposition~7.5]{AvGoYoc}.
\end{remark}

Now we define the hyperbolic skew-product structure
with which Lorenz-like flows are endowed.

\begin{definition}
  \label{def:hyp-skew-product} 
  Let $F:\bigcup_{l} \Delta^{(l)} \to \Delta$ be a uniformly
  expanding Markov map, preserving an absolutely continuous
  probability measure $\nu$. A \emph{hyperbolic
    skew-product} over $F$ is a map $\widehat{F}$ from a
  dense open subset of an open domain $\widehat{\Delta}$
  {of a compact Riemannian manifold $M$}, to
  $\widehat{\Delta}$, satisfying the following properties:
\begin{enumerate}
\item there exists a continuous
map $\pi : \widehat{\Delta} \to \Delta$
such that $F\circ \pi = \pi \circ \widehat{F}$ whenever both members
of the equality are defined;
\item there is a $\widehat F$-invariant probability measure
  $\eta$ on $\widehat{\Delta}$, giving full mass to
  $\widehat\Delta$;
\item there exists a family of probability measures
  $\{\eta_x\}_{x\in \Delta}$ on $\widehat{\Delta}$ which is
  a disintegration of $\eta$ over $\nu$, that is, $x\mapsto \eta_x$ is
  measurable, $\eta_x$ is supported on $\pi^{-1}(x)$ and, for
  each measurable subset $A$ of $\widehat{\Delta}$ we have
  $\eta(A)=\int \eta_x(A)\,d\nu(x)$.
  Moreover, this disintegration is smooth: we can find a
  constant $C>0$ such that, for any open subset $V\subset
  \bigcup \Delta^{(l)}$ and for each $u\in C^1(\pi^{-1}(V))$,
  the function $\bar u : V \to \RR, x\mapsto\bar u(x):=\int
  u(y)\,d\eta_x(y)$ belongs to $C^1(V)$ and satisfies
  \begin{align*}
  \sup_{x\in V} \|D\bar u(x)\|
  \leq C \sup_{y\in \pi^{-1}(V)} \|Du(y)\|.
\end{align*}
\item
there is $\kappa>1$ such that, for all $w_1,w_2 \in
\widehat{\Delta}$ in the same leaf,
i.e. $\pi(w_1)=\pi(w_2)$, we have
$
  d(\widehat{F} w_1, \widehat{F} w_2) \leq \kappa^{-1}
  d(w_1,w_2).
$
\end{enumerate}
\end{definition}

\subsection{Good hyperbolic skew-product semiflow}
\label{sec:good-hyperb-semifl}

Now we introduce suspension semiflows over the class of
dynamical systems presented above. Given a function $r :
\cup_{\ell\in L} \De^{(\ell)} \to[r_0,+\infty)$ for some
$r_0>0$ we define
\begin{align*}
  \widehat\De_r=\{(w,t): w \in \widehat\De, \; 0\leq t \le
  r(\pi(w))\}/\sim,
\end{align*}
where $\sim$ is an equivalence relation that identifies the
pairs $(w,r(\pi(w)))$ and $(F(w),0)$. For any $\ell\in L$ let 
$\widehat\De_r^{(\ell)}$ be defined accordingly using $\De_r^{(\ell)}$.
In this way it is natural to consider the suspension semiflow $(Y_t)_t$ given
by $Y_t(w,s)=(w,s+t)$. In these coordinates it coincides
with the flow which consists in the displacement along the
``vertical'' direction. Moreover, we will say that $r$ is
the \emph{roof function} of the suspension skew-product
semi-flow $(Y_t)_t$ over the map $\widehat F$.

If $Y_t$ is a semiflow over a hyperbolic skew-product with a
good roof function which, moreover, has exponential tail,
then we say that $Y_t$ is a \emph{good hyperbolic
  skew-product semi-flow}.
If $\eta$ is an $\widehat F$-invariant probability measure so that
$\int r d\eta<\infty$, then $(Y_t)_t$ preserves the
probability measure $\ov\eta$ given by
$  \ov\eta=(\eta \otimes \Leb)/ \int r \,d\eta.$

\subsection{Statement of results}
\label{sec:statem-result}

Let $C^1(\widehat\Delta_r)$ be the space of bounded observables
$g:\widehat\De_r \to \R$ that are piecewise $C^1$ continuous
(i.e. $C^1$ in each element $\widehat\De_r^{(\ell)}$)
endowed with the norm $ \|g\|_1:=\sup_{w\in \widehat\De} |g(w)|
+ \sup_{w\in \widehat\De} \|Dg(w)\|. $
The following was proved in \cite[Theorem~2.7]{AvGoYoc}.

\begin{theorem}
  \label{thm:AvGoYoc}
  Let $Y_t$ be a good hyperbolic skew-product semi-flow on a
  space $\widehat{\Delta}_r$, preserving the probability
  measure $\ov\eta$. There exist constants $C>0$ and
  $\delta>0$ such that, for each pair of functions
  $\vfi,\psi\in C^1(\widehat{\Delta}_r)$ and $t\geq 0$,
  \begin{align*}
  \left| \int \vfi\cdot \psi \circ Y_t \,d\ov\eta -\left( \int \vfi
  \,d\ov\eta\right) \left( \int \psi \,d\ov\eta \right) \right| \leq C
  \|\vfi\|_1 \|\psi\|_1e^{-\delta t}.
  \end{align*}
\end{theorem}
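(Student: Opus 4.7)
The plan is to follow the transfer-operator / Laplace-transform method of Dolgopyat, Baladi--Vall\'ee, and \'Avila--Go\"uezel--Yoccoz. The argument splits naturally into (i) a stable reduction that kills the hyperbolic fiber direction, (ii) analysis of a family of twisted transfer operators $\cL_s$ on the expanding base, (iii) a Dolgopyat-type oscillatory cancellation estimate for large $|\mathrm{Im}(s)|$, and (iv) a Paley--Wiener contour-shift to pass from spectral estimates to decay of correlations.

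First I would exploit the contraction along $\pi$-fibers, combined with the smoothness of the disintegration $\{\eta_x\}$, to reduce the problem to a suspension semiflow over the one-dimensional expanding Markov map $F$. For a $C^1$ observable $\vfi$ on $\widehat\De_r$, set $\bar\vfi_n(x):=\int \vfi\circ\widehat F^n\, d\eta_x$. The fibrewise contraction $d(\widehat F w_1,\widehat F w_2)\leq \kappa^{-1}d(w_1,w_2)$ together with property~(3) of Definition~\ref{def:hyp-skew-product} gives $\|\vfi\circ\widehat F^n - \bar\vfi_n\circ\pi\circ\widehat F^n\|_0 \leq C\kappa^{-n}\|\vfi\|_1$, with uniform control of $\|\bar\vfi_n\|_{C^1}$. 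Replacing $\vfi,\psi$ by their averaged counterparts at a scale $n\sim t$ reduces the correlation integral, up to an exponentially small error, to one for a semiflow whose base is the one-dimensional expanding map $F$.

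Next I would encode correlations through the Laplace transform
\begin{align*}
\widehat\rho(s) := \int_0^\infty e^{-st}\left(\int\vfi\cdot\psi\circ Y_t\,d\ov\eta - \int\vfi\,d\ov\eta\int\psi\,d\ov\eta\right)dt
\end{align*}
and relate it to powers of the twisted transfer operator
\begin{align*}
\cL_s u(x) = \sum_{h\in\cH_1} e^{-s\,r(h(x))}\,|Jh(x)|\,u(h(x)),\qquad s=a+ib\in\CC.
\end{align*}
The exponential-tail hypothesis makes $\cL_s$ analytic on the strip $\mathrm{Re}(s)>-\si_0$, and the uniform expansion and bounded distortion of $F$ give a Lasota--Yorke inequality on a suitable Banach space (e.g.\ $C^1(\De)$ or the BV-type space of \cite{AvGoYoc}). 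For real $s$ close to $0$, standard Perron--Frobenius theory yields a simple leading eigenvalue $\la(s)$ with $\la(0)=1$, $\la'(0)=-\int r\,d\nu$, and a spectral gap, so that $\widehat\rho(s)$ has at worst a simple pole at $s=0$ and is analytic on a small strip $\mathrm{Re}(s)>-\delta$ away from zero.

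The main obstacle is the large-$|b|$ Dolgopyat estimate: one must show that for some $\alpha>0$, $\|\cL_{a+ib}^n\|\le C(1+|b|)e^{-\alpha n}$ for $|a|<\alpha$ and $|b|$ large. Iterating, $\cL_s^n u(x)=\sum_{h\in\cH_n}e^{-s r_n(h(x))}|Jh(x)|u(h(x))$ with $r_n=\sum_{k=0}^{n-1}r\circ F^k$; the required contraction in $L^2$ comes from genuine oscillation of the quadratic cross-terms $\exp\bigl(ib(r_n\circ h_1 - r_n\circ h_2)\bigr)$. This is precisely where the uniform non-integrability condition~(3) of Definition~\ref{def:goodroofunc} is indispensable: by the equivalent characterisations of UNI in \cite[Proposition~7.5]{AvGoYoc}, aperiodicity translates into a uniform lower bound on $|D(r_n\circ h_1 - r_n\circ h_2)|$ on an abundant family of pairs of inverse branches at each scale $|b|^{-1}$. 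A partition-of-unity argument coupled with the construction of a ``Dolgopyat operator'' then contracts the $L^2$-norm by a definite factor at each step, yielding the desired estimate. Combining this with the spectral gap near zero and a standard Paley--Wiener contour shift across $\mathrm{Re}(s)=-\delta$ produces $|\mathrm{Corr}_t(\vfi,\psi)|\le C\|\vfi\|_1\|\psi\|_1 e^{-\delta t}$, with the exponentially small error from the stable reduction absorbed into the same rate.
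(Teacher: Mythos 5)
The paper does not prove Theorem~\ref{thm:AvGoYoc} at all: it is quoted verbatim as Theorem~2.7 of \cite{AvGoYoc} and is used as a black box in the rest of the paper (which is devoted to verifying that geometric Lorenz flows satisfy the hypotheses, not to reproving the abstract decay result). Your sketch is therefore not comparable to anything in this paper, but it does correctly reconstruct the architecture of the cited argument in \cite{BaVal2005,AvGoYoc}: (i) reduction to the expanding base via the fiber contraction $d(\widehat F w_1,\widehat F w_2)\le\kappa^{-1}d(w_1,w_2)$ and the $C^1$ disintegration $\{\eta_x\}$, at the cost of an $O(\kappa^{-n})$ error; (ii) twisted transfer operators $\cL_s$, analytic on $\mathrm{Re}(s)>-\si_0$ thanks to the exponential tail, with a Lasota--Yorke/spectral-gap structure near $s=0$; (iii) a Dolgopyat-type $L^2$ cancellation estimate for large $|\mathrm{Im}(s)|$ driven by the uniform non-integrability condition, via the equivalent characterisations of UNI in \cite[Prop.~7.5]{AvGoYoc}; and (iv) a Paley--Wiener contour shift. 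Two small cautions if you wanted to turn this into a real proof: the Banach space used in \cite{AvGoYoc} is not $C^1(\De)$ but a Lipschitz-type space adapted to the countable Markov partition (essential because $\cH_1$ is infinite), and the stable reduction has to be done more carefully than simply replacing $\vfi$ by $\bar\vfi_n\circ\pi\circ\widehat F^n$ — the averaging must be compatible with the time-$t$ correlation integral, which is where the smoothness hypothesis (3) of Definition~\ref{def:hyp-skew-product} actually enters.
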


The main arguments in this paper prove the following.

\begin{maintheorem}
  \label{mthm:Lorenz-hyp-skew}
  Given any compact $3$-manifold $M$, for each $s\ge2$ we
  can find an open subset $\cU$ of $\fX^s(M)$ such that each
  $X\in\cU$ exhibits a geometric Lorenz flow which is
  {$C^s$}-smoothly semi-conjugated to a good hyperbolic skew-product
  semi-flow.
\end{maintheorem}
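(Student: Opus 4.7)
The plan is to produce the open family $\cU$ by carefully engineering a one-parameter region of geometric Lorenz vector fields whose associated one-dimensional quotient map is genuinely $C^s$ on each branch, and then to realize the resulting Poincar\'e-section dynamics as a suspension of a hyperbolic skew-product with a good roof function. First I would set up a standard geometric Lorenz construction: fix a singularity with eigenvalues $-\la_1<-\la_3<0<\la_2$ satisfying the usual expanding condition $\la_2>\la_1$, linearize the flow on a small box around the singularity, and glue it to a global return diffeomorphism that brings the two ``wings'' back onto a cross-section $\Sigma$. The resulting Poincar\'e map $P:\Sigma\setminus\Gamma\to\Sigma$ admits a contracting invariant foliation $\cF^s$; under the standard $r$-bunching condition on $DP$, this foliation can be taken to be $C^s$-smooth, and the quotient along $\cF^s$ yields a piecewise expanding Lorenz map $f:I\setminus\{0\}\to I$ which is $C^s$ on each branch. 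Since all ingredients (singular-hyperbolicity, bunching, global return) are $C^s$-open conditions on the vector field, this defines the desired $\cU\subset\fX^s(M)$.

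Next I would obtain the uniformly expanding Markov map of Section~\ref{sec:unif-exp-markov} by inducing $f$ on a set avoiding $0$. Because $|f'|\to\infty$ only near $0$, choosing $I_0\subset I$ bounded away from the critical point and letting $F$ be the first-return map of $f$ to $I_0$ produces a countable full-branched $C^s$ uniformly expanding Markov map, with absolutely continuous invariant probability $\nu$ verifying condition $(2)$ in Section~\ref{sec:unif-exp-markov} from the bounded distortion of $f$. Lifting $F$ to $\Sigma$ through the foliation $\cF^s$ defines a skew-product $\widehat F$ on $\widehat\De$: the fibers of $\pi:\widehat\De\to\De$ are the pieces of stable leaves, the contraction along $\cF^s$ supplies item $(4)$ of Definition~\ref{def:hyp-skew-product}, and the $C^s$-smoothness of $\cF^s$ delivers the smooth disintegration in item $(3)$. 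The roof function $r$ is the accumulated flow time needed to realize the induced return to the lift of $I_0$; it is bounded below because each passage sweeps through the global return, and $\sup_{h\in\cH_1}\|D(r\circ h)\|_0<\infty$ because $r$ is essentially the logarithm of the first-return-time of $F$ plus a uniformly bounded global travel time.

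The two delicate points are the exponential tail and the uniform non-integrability (item $(3)$ of Definition~\ref{def:goodroofunc}). Exponential tail of $r$ follows from the near-linear behavior around the singularity: a branch of $F$ with long return time corresponds to an orbit segment that approaches the stable manifold of the singularity closely, and the $\nu$-measure of such segments decays geometrically in the travel time because the one-dimensional map $f$ expands by a definite factor on each additional return, so $\int e^{\si_0 r}\,d\nu<\infty$ for $\si_0$ below a multiple of $\log\la$. I expect the main obstacle to be the aperiodicity condition $r\ne v+u\circ F-u$. Using the equivalent reformulation of Remark~\ref{rmk:last-condit-definit} (\cite[Proposition~7.5]{AvGoYoc}), this reduces to exhibiting two periodic points $p,q$ of $F$ whose total return times $r^{(n)}(p)$ and $r^{(m)}(q)$ have non-arithmetic (non-lattice) $\ZZ$-combinations, equivalently that the temporal distance function determined by $p$ and $q$ is not cohomologous to a locally constant function. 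I would arrange this by selecting two short hyperbolic periodic orbits of the flow whose periods have an irrational ratio, a condition which is $C^s$-open since periods and the temporal distance function depend $C^s$-continuously on $X\in\cU$; if necessary $\cU$ is shrunk at this stage to guarantee the Diophantine-type condition robustly.

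Assembling the pieces, $\widehat F$ is a hyperbolic skew-product over the uniformly expanding Markov map $F$, $r$ is a good roof function with exponential tail, and therefore the suspension semiflow $(Y_t)_t$ is a good hyperbolic skew-product semiflow in the sense of Section~\ref{sec:good-hyperb-semifl}. The semi-conjugacy between the restriction of the original flow $X_t$ to the Lorenz attractor and $(Y_t)_t$ is built from the $C^s$ Poincar\'e map, the $C^s$ stable foliation, and the $C^s$ time parametrization along the flow, hence is $C^s$-smooth as required; and $\cU$ is open because every ingredient (singular-hyperbolicity, bunching for $\cF^s$, smoothness of $f$, Diophantine periods) is a $C^s$-open condition on $X$.
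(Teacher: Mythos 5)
Your overall architecture matches the paper up to the point of verifying the roof function is good: build a Lorenz flow with $C^s$ stable foliation via a bunching/dissipativity condition on the eigenvalues at the singularity, quotient the Poincar\'e map to a $C^s$ one-dimensional Lorenz map, induce to a countable-branch uniformly expanding Markov map, and lift through the stable foliation to a hyperbolic skew-product with roof given by accumulated flow time. The two delicate points you flag — exponential tail and aperiodicity — are also the ones the paper has to work for. But the way you propose to handle aperiodicity has a genuine gap, and it is tied to an earlier choice that looks innocuous but is not.

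You induce on a subinterval $I_0$ \emph{bounded away from the critical point} $0$, taking the first return of $f$ to $I_0$. The paper does the opposite: it deliberately induces on a neighborhood $\Delta=(-a,a)$ \emph{containing} $0$, precisely so that the roof function inherits the logarithmic singularity $\varrho(x)\approx -\lambda_1^{-1}\log|x|$, hence $D\varrho(x)\approx|x|^{-1}$, while the expanding map satisfies $Df(x)\approx|x|^{\alpha-1}$ with $0<\alpha<1$. The discrepancy between these two blow-up rates near $0$ is exactly what forces the aperiodicity identity $r=v+u\circ F-u$ to fail: if it held, differentiating along a sequence $x_n$ with $F(x_n)\to 0^+$ and comparing the growth of $Dr(F(x_n))$ against $DF(F(x_n))$ leads to a contradiction, as done in Subsection~\ref{sec:uniform-non-integr}. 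By inducing away from $0$, you discard the singular behavior and thereby lose the mechanism that makes UNI provable, let alone robustly provable.

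Your substitute for UNI — picking two periodic orbits of the flow whose periods have an irrational (or Diophantine) ratio — does not work as stated. First, irrational ratio of periods is \emph{not} a $C^s$-open condition: any irrational ratio can be destroyed by an arbitrarily small perturbation, and a Diophantine condition is likewise non-open (dense $G_\delta$ at best). Since Theorem~\ref{mthm:Lorenz-hyp-skew} requires an \emph{open} set $\cU$, this is fatal. Second, and independently, an irrational period ratio addresses the wrong cohomology problem: it rules out $r$ being cohomologous to a \emph{constant} (weak mixing), not to a \emph{locally constant} $v$ as required by item~(3) of Definition~\ref{def:goodroofunc}. When $v$ is only required to be constant on each $\Delta^{(\ell)}$ of a countable partition, the values $v^{(n)}(p)$ at periodic points are unconstrained linear combinations over the partition labels, so the periods of $F$ alone impose no lattice structure that two irrationally-related orbits could violate. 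What you would actually need from \cite[Proposition~7.5]{AvGoYoc} is the quantitative, open UNI criterion (a uniform lower bound on $|D(r^{(n)}\circ h_1-r^{(n)}\circ h_2)|$), and the paper manufactures this out of the $|x|^{-1}$ versus $|x|^{\alpha-1}$ blow-up rates — a genuinely open condition because it is controlled by the eigenvalues of the singularity, which vary continuously with $X$.

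In short: your inducing domain must include the singular point, and the aperiodicity argument must exploit the singular blow-up of the roof rather than period ratios of periodic orbits. With that correction the rest of your outline (smooth foliation via bunching, lift to a skew-product, exponential tail via definite expansion plus recurrence estimates, smooth disintegration via $C^s$ stable holonomies) is consistent with the paper's proof.
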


The meaning of the smooth semi-conjugacy above is: if $U$ is
an open neighborhood such that $\overline{X^t(U)}\subset U$
for all $t>0$ and the attractor is given by
$\Lambda_X:=\cap_{t\ge 0}X^t(U)$, there exists a semi-flow
$Y_t$ on $\widehat\Delta_r$ as stated above, together with a
$C^s$ local diffeomorphism $\phi=\phi_X:\widehat\Delta_r \to M$, whose
image contains an open neighborhood of the geometric Lorenz
attractor $\Lambda_X$, for $X\in\cU$, satisfying $\phi_X\circ
Y_t = X_t \circ \phi_X$ at all points where both sides of the
equality are defined. Moreover $\phi_*\bar\eta=\mu$, where
$\mu$ is the physical measure supported on $\Lambda_X$, see
e.g.~\cite{APPV,AraPac2010}.

\begin{maincorollary}\label{mcor:Lorenz-exp-decay}
  The geometric Lorenz attractors given in
  Theorem~\ref{mthm:Lorenz-hyp-skew} have exponential decay
  of correlations for $C^1$ observables.
\end{maincorollary}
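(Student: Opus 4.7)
The plan is to transfer the exponential decay of correlations from the good hyperbolic skew-product semi-flow $(Y_t)_t$ on $\widehat{\Delta}_r$ to the geometric Lorenz flow $(X_t)_t$ on $\Lambda_X$, using the $C^s$ semi-conjugacy $\phi=\phi_X$ produced by Theorem~\ref{mthm:Lorenz-hyp-skew}, and then to invoke Theorem~\ref{thm:AvGoYoc} on the $Y_t$ side.

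First I would fix $X\in\cU$, let $\mu$ denote the physical measure on $\Lambda_X$, and take two observables $\varphi,\psi\in C^1(U)$ on a neighborhood $U$ of $\Lambda_X$ contained in the image of $\phi$. Set $\tilde\varphi:=\varphi\circ\phi$ and $\tilde\psi:=\psi\circ\phi$ on $\widehat{\Delta}_r$. The semi-conjugacy identity $\phi\circ Y_t=X_t\circ\phi$ together with the push-forward identity $\phi_*\bar\eta=\mu$ yields
\begin{align*}
\int\varphi\cdot(\psi\circ X_t)\,d\mu
&=\int(\varphi\circ\phi)\cdot(\psi\circ X_t\circ\phi)\,d\bar\eta
=\int\tilde\varphi\cdot(\tilde\psi\circ Y_t)\,d\bar\eta,
\end{align*}
with analogous identities $\int\varphi\,d\mu=\int\tilde\varphi\,d\bar\eta$ and $\int\psi\,d\mu=\int\tilde\psi\,d\bar\eta$. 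Thus the correlation function for $(X_t,\mu)$ applied to $(\varphi,\psi)$ coincides with the correlation function for $(Y_t,\bar\eta)$ applied to $(\tilde\varphi,\tilde\psi)$.

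Next I would check that $\tilde\varphi,\tilde\psi$ belong to $C^1(\widehat{\Delta}_r)$ in the piecewise sense required by Theorem~\ref{thm:AvGoYoc}. Since $\phi$ is a $C^s$ local diffeomorphism, the pull-back is $C^1$ on each piece $\widehat{\Delta}_r^{(\ell)}$, and the chain rule gives
\begin{align*}
\|\tilde\varphi\|_1\le \bigl(1+\|D\phi\|_0\bigr)\,\|\varphi\|_{C^1(U)},
\end{align*}
where $\|D\phi\|_0$ is bounded on the preimage of any compact neighborhood of $\Lambda_X$, with a parallel bound for $\tilde\psi$. Theorem~\ref{thm:AvGoYoc} then supplies constants $C,\delta>0$, independent of $\varphi$ and $\psi$, such that
\begin{align*}
\Big|\int\tilde\varphi\cdot(\tilde\psi\circ Y_t)\,d\bar\eta-\int\tilde\varphi\,d\bar\eta\int\tilde\psi\,d\bar\eta\Big|\le C\|\tilde\varphi\|_1\|\tilde\psi\|_1 e^{-\delta t},
\end{align*}
which, combined with the change-of-variables identities from the previous paragraph, is the desired exponential decay estimate for $\varphi,\psi$ against $\mu$.

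All the substantive dynamical work is absorbed into Theorem~\ref{mthm:Lorenz-hyp-skew}: constructing the suspension model, verifying the hyperbolic skew-product axioms, and establishing both the uniform non-integrability (aperiodicity) and the exponential tail of the roof function. For the present corollary the only point that deserves a brief verification is that the semi-conjugacy $\phi$ is compatible with the identification $\sim$ used to form $\widehat{\Delta}_r$, which is automatic from $\phi\circ Y_t=X_t\circ\phi$, and that $D\phi$ is uniformly bounded on the relevant compact region of $\widehat{\Delta}_r$ projecting onto a neighborhood of $\Lambda_X$, so that pull-back defines a continuous linear map from $C^1(U)$ into the piecewise $C^1$ space on which Theorem~\ref{thm:AvGoYoc} operates. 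Once these points are in hand, the corollary is immediate.
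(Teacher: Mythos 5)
Your argument is essentially the paper's own: pull the observables back through $\phi$, use $\phi\circ Y_t = X_t\circ\phi$ and $\phi_*\bar\eta=\mu$ to equate the two correlation integrals, check that the pull-backs lie in the piecewise $C^1$ class, and then invoke Theorem~\ref{thm:AvGoYoc}. The only additions you make beyond the paper's proof are the explicit norm bound $\|\tilde\varphi\|_1\le(1+\|D\phi\|_0)\|\varphi\|_{C^1}$ and the remark about compatibility with $\sim$, both of which the paper leaves implicit (the paper instead uses bump functions to arrange that the extensions of $\vfi,\psi$ are defined on the whole image of $\phi$); so this is the same approach.
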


Indeed, if we take $\vfi,\psi$ a pair of $C^1$ functions on
$\Lambda_X$, for some $X\in\cU$, then these maps are
restriction of $C^1$ maps on an open neighborhood $W$ of
$\Lambda_X$ in $M$, which we denote by the same letters. We
can assume that $W$ contains the image
$\phi(\widehat\Delta_r)$ of $\phi$, for otherwise we can
extend $\vfi,\psi$ to this neighborhood using bump functions
without changing their values over $\Lambda_X$. Hence
$\bar\vfi:=\vfi\circ\phi$, $\bar\psi:=\psi \circ\phi$ are
$C^1$ functions on $\widehat\Delta_r$ and
\begin{align*}
  \int \bar\vfi\cdot \bar\psi \circ Y_t \,d\ov\eta
  &=
  \int (\vfi\circ\phi)\cdot (\psi\circ\phi \circ Y_t)
  \,d\ov\eta
  =
  \int (\vfi\circ\phi)\cdot (\psi\circ X_t \circ \phi)
  d\,\ov\eta
  \\
  &=
  \int \vfi\cdot (\psi \circ X_t) \,d(\phi_*(\ov\eta))
  =
  \int \vfi\cdot (\psi \circ X_t) \,d\mu,
\end{align*}
so the exponential decay of correlations follows from
Theorem~\ref{thm:AvGoYoc}.

\subsection{Strategy of the proof}
\label{sec:strategy-proof-theor}

The proof of the exponential decay of correlations for
singular flows stated in
Corollary~\ref{mcor:Lorenz-exp-decay} for the class of
Lorenz attractors associated to vector fields in the open
sets of Theorem~\ref{mthm:Lorenz-hyp-skew} consists of three
main steps.

First we prove that every three-dimensional manifold admit
Lorenz-like attractors whose one-dimensional stable
foliation is $C^2$. Moreover, such a contruction is robust
in the sense that it holds for every $C^2$ close vector
field.  In fact we get that the restriction of the flows to
the Lorenz-like attractors are smoothly semi-conjugated to
suspension over a $C^2$ non-uniformly hyperbolic hyperbolic
Poincar\'e map, that is, a partially hyperbolic
transformation with one-dimensional stable direction and
one-dimensional central direction with positive Lyapunov
exponent with respect to an absolutely continuous invariant
measure.  The crucial regularity of the stable foliation is
a consequence of a strong partially hyperbolicity of the
Poincar\'e transformation requires a condition on the
eigenvalues of the singularity, which is a $C^1$-open
condition in a neighborhood of the original vector field.

The second ingredient is to prove that such flows are
semi-conjugated to suspension flows whose roof function is
constant along stable leaves. Such a property is obtained
first for the geometric Lorenz attractor by
construction. Although all sufficiently close vector fields
still preserve the global cross-section for the original
flow, it is most likely that the Poincar\'e return time to the
original cross-section of those flows not to be constant along the
stable leaves. So, for every sufficiently close flow we
consider adapted sections, that is, sections that are
foliated by the one-dimensional strong-stable leaves (see
e.g.  \cite{APPV}). Hence, the invariance of the strong
stable foliation guarantees that the first return time
function is constant on stable leaves.

Finally we show that these flows satisfy the assumptions of
\cite{BaVal2005,AvGoYoc} on a criterium for exponential
decay of correlations. Namely, the flow is conjugated to a
suspension flow over a hyperbolic skew-product that admits a
unique SRB measure with a smooth disintegration along the
strong-stable foliation, and whose induced roof function
satisfies a non-integrability condition and has exponential
tail. For the later we revisit the construction of the SRB
measure for Lorenz attractors and obtain a disintegration of
the measure as fixed points associated to suitable transfer
operators.

%

\subsection*{Acknowledgements}
Part of this work was done during the stay of P.V. at the
conference Low-Dimensional Dynamics as an activity of the
CODY Autumn in Warsaw, whose excellent research conditions
are greatly acknowledged. {The authors are deeply grateful
to the anonimous referee for many suggestions and comments that helped
to improve the manuscript.}


\section[Smooth Lorenz map]{The geometric
  Lorenz flow with smooth Lorenz map}
\label{sec:geometr-lorenz-flow}

Here we describe the construction of geometric Lorenz flows
with $C^k$ smooth strong-stable foliation, for each integer
$k\ge2$, following \cite[Chap. 3, Sect. 3.3]{AraPac2010} and
taking advantage of the idea of $k$-domination
from~\cite{HPS77}. The proof of the next proposition will be
given along the rest of this section.

\begin{proposition}
  \label{pr:Lorenz-smooth-stable}
  Given $k\in\ZZ^+$ there exists a $C^k$ vector
  field $X$ on $\RR^3$ and a $C^k$ neighborhood $\cU$ of $X$
  in $\fX^k(\RR^3)$ such that
  \begin{itemize}
  \item there exists a trapping region $U$ containing a
    surface cross-section $S$ for the flow of every
    $Y\in\cU$;
  \item the maximal positively invariant subset $\Lambda_Y$
    inside $U$ for the flow of $Y$ is a transitive attractor
    containing a hyperbolic singularity $\sigma$;
  \item the first return map $P_Y$ from $S^*\subset S$
    to $S$ admits a $C^k$ smooth uniformly contracting
    foliation $\cF_Y$, where $S^*=S\setminus W^{s}_{loc}(\sigma)$;
  \item the induced one-dimensional quotient map
    $f_Y=P_Y/\cF_Y$ is a piecewise $C^k$ smooth expanding
    map with two branches defined on intervals $I^\pm$,
    where $|Df_Y|>\sqrt2$ having a common boundary point
    $0$, in a neighborhood of which the derivative $Df_Y$
    grows as the logarithm of the distance to $0$;
  \item the map $f_Y$ is locally eventually onto: every
    interval $J$ in the domain of $f_Y$ admits a subinterval $J_0$
    and some iterate $n>2$ such that $f^n(J_0)$ contains
    either $I^-$ or $I^+$.
  \end{itemize}
\end{proposition}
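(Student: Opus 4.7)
The plan is to revisit the Guckenheimer--Williams geometric construction and impose a quantitative eigenvalue condition at the singularity which allows the $k$-normally hyperbolic invariant section theorem of~\cite{HPS77} to be invoked. First I would fix eigenvalues $\la_1<\la_2<0<\la_3$ at the singularity $\si$ satisfying the usual Lorenz inequalities $\la_2+\la_3>0$ and $\la_1+\la_3<0$, but strengthened so that $|\la_1|>k|\la_2|$ (a strict inequality that will yield $k$-domination of the return map). By arranging non-resonance and appealing to a Sternberg-type smooth linearization, the flow of $X$ is $C^k$-conjugate to its linear part in a neighbourhood $V$ of $\si$. Outside $V$ I would glue on a standard return branch sending the two escape rectangles back to the cross-section $S\subset V$, so that the resulting vector field $X$ admits a trapping neighbourhood $U$ whose maximal invariant set $\La$ is a transitive attractor containing $\si$.

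Next I would analyse the first-return map $P=P_X\colon S^*\to S$ by composing the explicit linearized flow inside $V$ with the smooth return branch. In coordinates $(x,y)$ on $S$, where $x$ parametrises the unstable and $y$ the strong-stable direction of $\si$, the flight through $V$ produces derivatives of order $|x|^{-\la_2/\la_3}$ along $\partial_x$ and $|x|^{|\la_1|/\la_3}$ along $\partial_y$. This yields a partially hyperbolic splitting $TS=E^{ss}\oplus E^{cu}$ for $P$ with the bound
\[
 \|DP|_{E^{ss}}\|\cdot \|(DP|_{E^{cu}})^{-1}\|^{k}\;\lesssim\;|x|^{|\la_1|/\la_3 - k|\la_2|/\la_3},
\]
which, under our eigenvalue condition $|\la_1|>k|\la_2|$, is bounded by some $\theta<1$ uniformly in $x$. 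With this $k$-domination in hand, the HPS invariant section theorem applied to the (partially hyperbolic) graph transform of $P_Y$ produces a $C^k$ strong-stable foliation $\cF_Y$ of $S^*$; the quotient map $f_Y=P_Y/\cF_Y$ is then piecewise $C^k$ on the two intervals $I^{\pm}$ cut out by $W^{s}_{\rm loc}(\si)\cap S$. The derivative of $f_Y$ is of order $|x|^{-\la_2/\la_3}$ near $0$; by fine-tuning the return branch (geometric factors of the affine transition) and by shrinking $|\la_2|$ relative to $\la_3$, I would adjust both the global lower bound $|Df_Y|>\sqrt 2$ and the prescribed rate of blow-up at $0$.

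Since all of the relevant inequalities --- trapping, hyperbolicity of $\si$, the $k$-domination estimate, the bound $|Df_Y|>\sqrt 2$, and the two-branch onto structure --- are strict inequalities on finitely many $C^k$-continuous quantities, they persist on a $C^k$-open neighbourhood $\cU$ of $X$. For each $Y\in \cU$ the global cross-section $S$ is preserved (up to a small isotopy) and the same analysis applies to $P_Y$, yielding the desired $\cF_Y$ and $f_Y$. The \emph{locally eventually onto} property of $f_Y$ is a standard consequence of having two full onto branches with $|Df_Y|>\sqrt 2$: any subinterval $J\subset I^{\pm}$ is stretched by a factor $>\sqrt 2$ at every iterate until its image straddles $0$, after which one of its two pieces is mapped onto a full $I^{\pm}$ in one further step.

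The main obstacle is that the return map $P_Y$ is genuinely singular along $W^{s}_{\rm loc}(\si)\cap S$: its derivatives blow up and the classical HPS framework assumes bounded data. I would circumvent this either by working with an induced map on a cross-section chosen away from the singular locus and transferring regularity of the foliation back through the (smooth) linearized flow, or by proving directly a singular version of the graph transform whose contraction estimates exploit the explicit asymptotics $|x|^{|\la_1|/\la_3}$ and $|x|^{-\la_2/\la_3}$ derived from the linearization. In either approach it is the quantitative gap $|\la_1|-k|\la_2|>0$ that is doing all the work, making the bunching estimate uniform up to the singularity and thereby producing a genuinely $C^k$ (not merely $C^{1+\al}$) foliation.
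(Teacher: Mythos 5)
Your overall strategy --- the Guckenheimer--Williams construction near the singularity, smooth linearization via a non-resonance condition, and an eigenvalue-bunching condition feeding into the HPS $k$-normal-hyperbolicity machinery --- is exactly the one the paper follows. However there are two concrete problems.

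First, the bunching estimate contains an arithmetic slip that voids it. The $x$-derivative of the return map satisfies $|Df(x)|\approx|x|^{-\la_2/\la_3-1}$, not $|x|^{-\la_2/\la_3}$: you have written the \emph{exit coordinate} $z\sim|x|^{-\la_2/\la_3}$ where its derivative should be. Correcting this, your displayed product becomes $|x|^{|\la_1|/\la_3 - k|\la_2|/\la_3 + k}$, and since the Lorenz condition $\la_2+\la_3>0$ gives $\la_3>|\la_2|$, that exponent is positive for \emph{every} admissible choice of eigenvalues --- so the estimate as stated is vacuous and cannot be isolating the correct $C^k$ bunching threshold. The paper requires $\be>\al+k$ with $\al=|\la_2|/\la_3$ and $\be=|\la_1|/\la_3$, i.e.\ $|\la_1|>|\la_2|+k\la_3$ in your notation, which is strictly stronger than your $|\la_1|>k|\la_2|$ (again because $\la_3>|\la_2|$). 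You would have to re-derive the correct bunching inequality and verify that your weaker threshold is actually sufficient; the proposal does not supply this.

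Second, and more fundamentally, your ``main obstacle'' is something the paper simply never meets. You correctly observe that $P_Y$ has unbounded derivative along $\Ga$, so the classical graph-transform estimates of HPS do not apply directly on $S$. But the paper does not apply $k$-normal hyperbolicity to the Poincar\'e map: it applies it to the \emph{flow}. The vector field is $C^k$ everywhere, including at the singularity $\si$, which is a perfectly regular hyperbolic equilibrium; only the first-return map is singular. The trapping region $U$ carries a dominated splitting $E^{ss}\oplus E^{cu}$ with uniform rates, so the strong-stable foliation of $U$ is $C^k$ under the eigenvalue condition by $k$-normal hyperbolicity for flows (HPS, as in Rovella's treatment), and the foliation $\cF_Y$ on $S$ is just the trace of this flow foliation. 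Your two proposed workarounds --- inducing on a cross-section away from the singular locus, or a bespoke ``singular graph transform'' --- are detours that would require substantial new arguments, none of which the proposal provides; the flow-level application makes them unnecessary.
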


A similar idea in the flow setting was
used in~\cite{Ro93} to obtain a $C^2$ strong-stable
foliation on a singular attractor.

\subsection{Near the singularity}\label{sec:near-singul}

In a neighborhood of the origin we consider the linear
system $(\dot x, \dot y, \dot z)=(\lambda_1 x,\lambda_2 y,
\lambda_3 z)$, thus
\begin{align}\label{eq:LinearLorenz}
X^t(x_0,y_0,z_0)=
(e^{\lambda_1t}x_0, e^{\lambda_2t} y_0, e^{\lambda_3t}z_0),
\end{align}
where $\lambda_2<\lambda_3<0<-\lambda_3<\lambda_1$ and
$(x_0, y_0, z_0)\in\RR^3$ is an arbitrary initial point near
the origin.

To ensure that arbitrarily small $C^2$ perturbation of this
flow are still smoothly linearizable near the continuation
of the hyperbolic singularity $\sigma$, we use a smooth
linearization result which can be found in Hartman
\cite[Theorem 12.1, p. 257]{Hartman02}.

\begin{theorem}
  \label{thm:smooth-linear}
  Let $n\in\ZZ^+$ be given. Then there exists an integer
  $N=N(n)\ge2$ such that: if $\Gamma$ is a real non-singular
  $d\times d$ matrix with eigenvalues
  $\gamma_1,\dots,\gamma_d$ satisfying
  \begin{align}\label{eq:non-resonance}
    \sum_{i=1}^d m_i \gamma_i \neq \gamma_k
    \quad
    \text{for all}
    \quad
    k=1,\dots, d
    \qand
    2\le\sum_{j=1}^d m_j\le N
  \end{align}
  and if $\dot\xi=\Gamma\xi+\Xi(\xi)$ and
  $\dot\zeta=\Gamma\zeta$, where $\xi,\zeta\in\RR^d$ and
  $\Xi$ is of class $C^N$ for small $\|\xi\|$ with
  $\Xi(0)=0, \partial_\xi\Xi(0)=0$; then there exists a
  $C^n$ diffeomorphism $R$ from a neighborhood of $\xi=0$ to
  a neighborhood of $\zeta=0$ such that $R\xi_t
  R^{-1}=\zeta_t$ for all $t\in\RR$ and initial conditions
  for which the flows $\zeta_t$ and $\xi_t$ are defined in
  the corresponding neighborhood of the origin.
\end{theorem}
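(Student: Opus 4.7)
The plan is to follow the classical Sternberg--Chen--Hartman strategy, splitting the argument into a formal/polynomial normalization and a functional-analytic treatment of a high-order remainder. In a first step I would look for a formal change of variables $\xi = R(\zeta) = \zeta + \sum_{2\le |m| \le N} R_m \zeta^m$ conjugating $\dot\xi = \Gamma\xi + \Xi(\xi)$ to the linear system $\dot\zeta = \Gamma\zeta$. Matching the coefficients of $\zeta^m$ component by component leads, at each order $|m|$ and each index $k$, to a homological equation of the form
\begin{equation*}
\bigl(\textstyle\sum_i m_i\gamma_i - \gamma_k\bigr) R^{(k)}_m = P^{(k)}_m,
\end{equation*}
where $P^{(k)}_m$ is an explicit polynomial in the coefficients of $\Xi$ and in previously computed $R_{m'}$ with $|m'|<|m|$. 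The non-resonance hypothesis ensures that the left-hand coefficient is non-zero for $2\le |m|\le N$ and each $k$, so all $R^{(k)}_m$ are uniquely determined. This produces a polynomial change of coordinates $R_N$ of degree $N$ which conjugates the two systems modulo a $C^N$ remainder that is $N$-flat at the origin.

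After this preparatory step the system reads $\dot\xi = \Gamma\xi + \widetilde\Xi(\xi)$ with $\widetilde\Xi \in C^N$ and $D^j\widetilde\Xi(0)=0$ for $j\le N$. The second step is to construct a $C^n$ diffeomorphism $R_0$, tangent to the identity to order $N$, that linearizes this residue. I would recast the conjugacy of vector fields as a functional equation for the correction $\psi = R_0 - \mathrm{Id}$ at the level of time-one maps,
\begin{equation*}
\psi\bigl(e^\Gamma \zeta\bigr) - e^\Gamma\psi(\zeta) = G(\zeta,\psi),
\end{equation*}
where $G$ is $N$-flat at the origin and depends smoothly on $\psi$. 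Splitting $\R^d$ into the stable and unstable invariant subspaces of $\Gamma$, one iterates the equation forwards on the stable factor and backwards on the unstable factor, representing $\psi$ as a telescoping sum. A standard contraction-mapping argument in a closed ball of an appropriate weighted $C^n$ function space then produces the desired $\psi$.

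The main obstacle is this last step: one must control $C^n$ norms under repeated composition with $e^\Gamma$. Differentiating the telescoping series $j\le n$ times introduces factors of order $e^{jt\max_k|\mathrm{Re}\,\gamma_k|}$ coming from the chain rule along the linear flow, while the $N$-flatness of the remainder supplies a gain of order $e^{-Nt\min_k|\mathrm{Re}\,\gamma_k|}$ at each iterate. Summability in $C^n$ is achieved precisely when $N\cdot\min_k|\mathrm{Re}\,\gamma_k| > n\cdot\max_k|\mathrm{Re}\,\gamma_k|$, and this inequality dictates the explicit lower bound $N=N(n)$ asserted in the statement. The non-resonance hypothesis plays no role in this analytic part; it enters only in the first step to kill polynomial obstructions. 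Once the $C^n$ convergence is established, the flow-conjugation identity $R\xi_tR^{-1}=\zeta_t$ follows because both sides satisfy the same autonomous ordinary differential equation with the same initial condition, and the composition $R=R_N\circ R_0$ yields the required $C^n$ linearization in a neighborhood of the origin.
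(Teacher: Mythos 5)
The paper does not prove this theorem at all: it is quoted as a black box from Hartman's textbook (cited as \cite[Theorem 12.1, p.~257]{Hartman02}), and the surrounding text only records how the non-resonance hypothesis is used to keep the eigenvalues in a linearizable open region. So there is no proof of the authors' own to compare your argument against. Your outline is the standard Sternberg--Chen--Hartman two-step scheme --- a finite-order Poincar\'e--Dulac normalization using the non-resonance relations \eqref{eq:non-resonance} to kill polynomial obstructions through degree $N$, followed by a functional-analytic linearization of the $N$-flat residue via the time-one map, a stable/unstable splitting, a telescoping series, and a contraction in a weighted $C^n$ space --- and this is indeed the strategy behind Hartman's Theorem 12.1, so the structure of your sketch is correct.

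One point deserves care. The summability condition you extract, $N\cdot\min_k|\mathrm{Re}\,\gamma_k| > n\cdot\max_k|\mathrm{Re}\,\gamma_k|$, makes the required order $N$ depend on the spectral spread of $\Gamma$, whereas the statement as transcribed asserts $N=N(n)$ independently of $\Gamma$. This is not a slip in your analysis: the Sternberg flattening argument really does trade $C^n$ regularity against the ratio $\max|\mathrm{Re}\,\gamma|/\min|\mathrm{Re}\,\gamma|$, and the literature states the order of non-resonance needed as a function of that ratio. As written, your sketch therefore proves a version with $N=N(n,\Gamma)$; to obtain the theorem literally as stated one would have to either consult Hartman's precise hypotheses (the transcription here may be a simplification) or note, as the authors implicitly do, that the application is to a fixed $3\times 3$ matrix and a small $C^1$-neighborhood of it, so the spectral ratio is uniformly controlled and the distinction is harmless. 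Making this dependence explicit would tighten your argument.
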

Hence it is enough for us to choose the eigenvalues
$(\lambda_1,\lambda_2, \lambda_3)\in\RR^3$ satisfying a
\emph{finite set of non-resonance relations}
\eqref{eq:non-resonance} for a certain $N=N(2)$. For this
condition defines an open set in $\RR^3$ and so all small
$C^1$ perturbations $Y$ of the vector field $X$ will have a
singularity whose eigenvalues
$(\lambda_1(Y),\lambda_2(Y),\lambda_3(Y))$ are still in the
$C^2$ linearizing region.

We consider the set $S=\{ (x,y,1) : |x|\le1/2, |y|\le1/2\}$ and
\begin{align*}
S^-&=\big\{ (x,y,1)\in S : x<0 \big\},&
\qquad
S^+&=\big\{ (x,y,1)\in S : x>0 \big\}\quad\text{and}
\\
S^*&=S^-\cup S^+=S\setminus\Gamma,\quad \text{where}&
\Gamma&=\big\{(x,y,1)\in S : x=0 \big\}.
\end{align*}
We assume without loss of generality that $S$ is a transverse
section to the flow so that every trajectory eventually
crosses $S$ in the direction of the negative $z$ axis as in
Figure~\ref{L3Dcusp}. Note that $\Gamma$ is the intersection
of $S$ with the local stable manifold $W^s_{loc}(\sigma)$ of
the equilibrium $\sigma=(0,0,0)$: $S^*=S\setminus
W^s_{loc}(\sigma)$.
Hence we get
\begin{align*}
X^\tau(x_0,y_0,1)=
\big( \sgn(x_0),  y_0e^{\lambda_2\tau(x_0)},
e^{\lambda_3\tau(x_0)}\big)
=
\big( \sgn(x_0),
y_0|x_0|^{-\frac{\lambda_2}{\lambda_1}},
|x_0|^{-\frac{\lambda_3}{\lambda_1}}\big)
\end{align*}
where $\sgn(x)=x/|x|$ for $x\neq0$, and
$0<\alpha=-\frac{\lambda_3}{\lambda_1} <1
<\beta=-\frac{\lambda_2}{\lambda_1}$ by the choice of the
eigenvalues.

Consider also $\Sigma=\{ (x,y,z) : |x|=1
\}={\Sigma}^-\cup{\Sigma}^+$ with ${\Sigma}^{\pm}=\{ (x,y,z)
: x=\pm 1\}$. For each $(x_0,y_0,1)\in S^*$ the time
$\widetilde\tau$ such that
$X^{\widetilde\tau}(x_0,y_0,1)\in\Sigma$ is given by
$\widetilde\tau(x_0)=-\frac{1}{\lambda_1}\log{|x_0|}$, which
depends on $x_0\in S^*$ only and is integrable with respect
to Lebesgue measure on any bounded interval $J$ of the real
line: $0<\int_J\widetilde\tau(x_0)\,d\lambda(x_0)<\infty$;
see \cite{AraPac2010,PT09}.  Moreover we also have that
$\widetilde\tau$ is bounded from below by
$\tau_0:=\log2/\lambda_1>0$.
Let $L:S^*\to\Sigma$ be given by
\begin{align}\label{eq:passcross}
  L(x,y,1)=\big( \sgn(x),y|x|^\beta,|x|^\alpha \big).
\end{align}
Clearly each line segment $S^*\cap\{x=x_0\}$ is
taken to another line segment $\Sigma\cap\{z=z_0\}$ as
sketched in Figure~\ref{L3Dcusp}.

\begin{figure}[h]
\includegraphics[width=6cm]{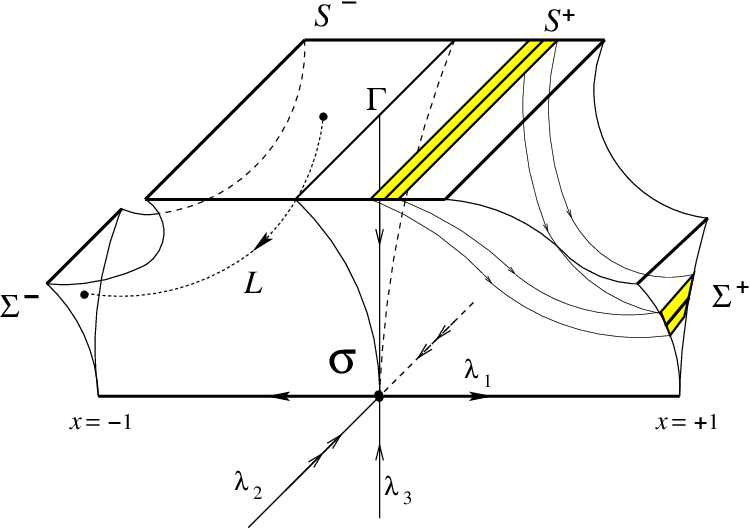}
\caption{\label{L3Dcusp}Behavior near the origin.}
\end{figure}

\subsection{The rotating effect}
\label{sec:rotating-effect}

To imitate the random turns of a regular orbit around the
origin and obtain a butterfly shape for our flow, as in the
original Lorenz flow,
the sets $\Sigma^\pm$ should return to the cross-section
$S$ through a flow described by a suitable composition
of a rotation $R_\pm$, an expansion $E_{\pm\theta}$ and a
translation $T_\pm$.

We assume that the ``triangles'' $L(S^\pm)$ are compressed
in the $y$-direction and stretched on the other transverse
direction and that this return map takes line segments
$\Sigma\cap\{z=z_0\}$ into line segments $S\cap\{x=x_1\}$,
as sketched in Figure~\ref{L3D}.

\begin{figure}[h]
\includegraphics[width=5cm]{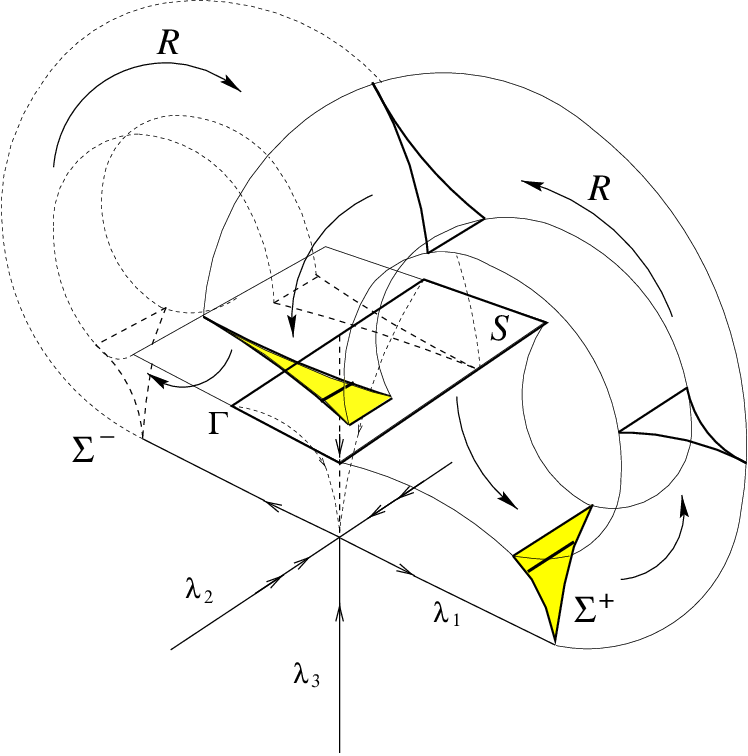}
\caption{\label{L3D}$R$ takes $\Sigma^\pm$ to $S$.}
\end{figure}
The choice of
$R_\pm, T_\pm, E_{\pm\theta}$ can be seen in \cite[Chapter
3, Section 3]{AraPac2010} of \cite{galapacif09}.

These transformations $R_\pm , E_{\pm\theta},
T_\pm$ take line segments $\Sigma^\pm\cap\{z=z_0\}$
into line segments $S\cap\{x=x_1\}$ as shown in
Figure~\ref{L3D}, and so does the composition $T_\pm\circ
E_{\pm\theta}\circ R_\pm$.

This composition of linear maps describes a vector field in
a region outside $[-1,1]^3$, in the sense that one can use
the above linear maps to define a vector field $X$ such that
the first return map to $S$ of the associated flow realizes
$T_\pm\circ E_{\pm\theta}\circ R_\pm$ as a map
$\Sigma^\pm\to S$.

We note that the flow on the attractor we are constructing
will pass through the region between $\Sigma^\pm$ and $S$ in
a relatively small time with respect the the linearized
region. The linearized regions will then dominate all
estimates of expansion/contraction.

More precisely, the time a point $(x_0,y_0,1)\in S^*$ takes
to return to $S$ is given by
$\tau_X(x_0,y_0)=\tau(x_0)=\widetilde\tau(x_0)+s_0=-(1/\lambda_1)\log|x_0|
+ s_0$, where $s_0>0$ is the constant time ``lateral
triangles'' $\Sigma^\pm$ take to flow until $S$. Hence the
return time to $S$ is clearly dominated by the behavior of
$\widetilde\tau$ and the behavior of the derivative of the
flow is dominated by the behavior of the flow in the
linearized region.

\subsection{The first return map to $S$}
\label{sec:first-return-map}

The above combined effects imply that 
the foliation of $S$ given by the lines $S\cap\{x=x_0\}$ is
invariant under the Poincar\'e first return map $P:S^*\to
S$, meaning that, for any given leaf $\gamma$ of this
foliation, its image $P(\gamma)$ is contained in a leaf of
the same foliation. Hence $P$ must have the form
$P(x,y)=\big( f(x),g(x,y)\big)$ for some functions
$f:I\setminus\{0\}\to I$ and $g:(I\setminus\{0\})\times I\to
I$, where $I=[-1/2,1/2]$.

Taking into account the definition of $L$ from the
linearized region we see that
\begin{align*}
  f(x)=\left\{
\begin{array}{cccc}
  f_1(x^\alpha), & \text{if   }  x < 0       \\
  f_0(x^\alpha), & \text{if   } x > 0
\end{array};
\right.
\quad \mbox{with $f_i =(-1)^{i}a\cdot x+b_i$}\quad  i=0,1;
\end{align*}
for constants $a>0$ and $b_0,b_1\in(-1/2,1/2)$, and
\begin{align*}
  g(x,y)=\left\{
\begin{array}{cccc}
g_1(x^\alpha,y\cdot x^\beta), & \text{if   } x < 0       \\
g_0(x^\alpha,y\cdot x^\beta), & \text{if   } x > 0
\end{array}
\right.,
\end{align*}
where $g_1|I^-\times I\to I $ and $g_0|I^+\times I\to I$ are
suitable affine maps, with $I^-=[-1/2,0)$, $I^+=(0,1/2]$.

\subsection{Properties of the one-dimensional map $f$}
\label{sec:propert-one-dimens}

Now we specify the properties of the one-dimensional map $f$
that follow from the previous construction. On the one hand
\begin{enumerate}
\item[(f1)] to imitate the symmetry of the Lorenz equations
  we take $f(-x)=-f(x)$. This is not essential in what
  follows and is not preserved under perturbation of the
  flow;
\item[(f2)] $f$ is discontinuous at $x=0$ with lateral limits
  $f(0^-)=+\frac12$ and $f(0^+)=-\frac12$ 
\end{enumerate}
Hence
\begin{align*}
  f(0+)=b_1=-\frac12, \quad f(0-)=b_0=\frac12
  \qand f\left(\frac12\right)=\frac{a}{2^\alpha}+b_1\le\frac12,
\end{align*}
thus $0<a\le 2^{\alpha}$. Since $Df(x)=a
\alpha|x|^{\alpha-1}$, its minimum is $DF(1/2)=a \alpha
2^{1-\alpha}$ and to get $Df>1$ we must have $a\alpha
2^{1-\alpha}>1$.
\begin{enumerate}
\item[(f3)] The map $f$ is differentiable on
  $I\setminus\{0\}$ and $Df(x)>\sqrt{2}$;
\end{enumerate}
to get this all we need is to choose
\begin{align}\label{eq:alphachoice1}
  \frac{2^{\alpha-1/2}}\alpha < a < 2^\alpha
  \quad\text{so that}\quad
  2^{1/2}\alpha>1
  \quad\text{or}\quad
  \alpha>1/\sqrt2.
\end{align}
\emph{This imposes a restriction on $\alpha=-\lambda_3/\lambda_1$,
thus the eigenvalue $\lambda_3$ cannot be too small with
respect to the eigenvalue $\lambda_1$ at the singularity.}
\begin{enumerate}
\item[(f4)] the lateral limits of $Df$ at $x=0$ are
  $Df(0^-)=+\infty$ and $Df(0^+)=-\infty$.
\end{enumerate}

On the other hand $g:S^*\to I$ is defined in such a way that
it contracts the second coordinate: $g'_y(w)\le\mu<1$ for
all $w\in S^*$, and the rate of contraction of $g$ on the
second coordinate should be much higher than the expansion
rate of $f$.  In addition the expansion rate is big enough
to obtain a strong mixing property for $f$.

\begin{remark}
  \label{rmk:non-degenerate}
  The expression of $Df$ ensures that the map $f$ satisfies
  \begin{align*}
    |\log Df(x)- \log Df(y)|
    = 
    (1-\alpha)\log\left|\frac{y}{x}\right|
    =
    (1-\alpha)\log\left|\frac{y-x}{y}+1\right|
    \le
    \frac{1-\alpha}{|x|}|y-x| ;
  \end{align*}
  and also that $Df(x)=a\alpha|x|^{\alpha-1}$
  which shows that $f$ behaves like a power of the distance
  to the singular set $\{0\}$.
\end{remark}



\subsection{Properties of the map $g$}
\label{sec:propert-map-g}

We note that by its definition the map $g$ is piecewise
$C^2$ and we can obtain the following bounds on its partial
derivatives:
\begin{enumerate}
 \item For all $(x,y)\in S^*$ with $x \neq 0$, we have
$|{\partial_y} g(x,y)|=  |x|^\beta$. As $\beta>1$ and $|x|\leq 1/2$
there is $0<\lambda<1$ such that
\begin{equation}
 \label{gy}
|{\partial_y} g| < \lambda.
\end{equation}
\item For $(x,y)\in S^*$ with $x \neq 0$, we have
${\partial_x} g(x,y)=\beta x^{\beta-\alpha}$.
Since $\beta>\alpha$  and $|x|\leq 1/2$  we get
$|\partial_x g| < \infty.$
\end{enumerate}
We note that from the first item above we have, first, a
very strong domination of the contraction along the
$y$-direction over the expansion along the $x$-direction,
that is
\begin{align}\label{eq:domination}
  \frac{|\partial_y g(x,y)|}{|Df(x)|}
  \approx
  |x|^{\beta-\alpha+1}
  \approx
  \frac{|x|^\beta}{|Df(x)|}
  \quad\text{with}\quad
  \beta-\alpha+1>1.
\end{align}
Secondly, from this there follows the uniform contraction of
the foliation $\cF_X$ given by the lines
$S\cap\{x=constant\}$, that is: there exists a constant
$C>0$ such that, for any given leaf $\gamma$ of the
foliation and for $y_1,y_2\in\gamma$, then
\begin{align*}
  \dist\big(P^n(y_1),P^n(y_2)\big)\le C\lambda^n\dist(y_1,y_2) 
  \quad\text{when}\quad n\to\infty.
\end{align*}

Thus the study of the maximal invariant set $\Lambda$ inside
the trapping region
\begin{align}\label{eq:trap-reg-U}
  U:=\{X^t(x,y,1):(x,y,1)\in S, 0\le t\le \tau_X(x,y)\}\cup\{(0,0,0)\}
\end{align}
for this $3$-flow can be reduced to the study of a
bi-dimensional map, where $\tau_X$ is the first return time
of the orbit of $(x,y,1)\in S$ under $X^t$ to $S$.
Moreover, the dynamics of this map can be further reduced to
a one-dimensional map, because the invariant contracting
foliation $\cF_X$ enables us to identify two points on the same
leaf, since their orbits remain forever on the same leaf and
the distance of their images tends to zero under
iteration. See Figure \ref{L1D} for a sketch of this
identification.

The quotient map $f:S^*/\cF_X\to S/\cF_X$ obtained through
the identification $\pi:S\to S/\cF_X$ will be called
\emph{the (one-dimensional) Lorenz map}. It satisfies
$f\circ\pi=\pi\circ P$ by construction and we note that
$S^*/\cF_X$ is naturally identified with $I\setminus\{0\}$
through a diffeomorphism, so that we obtain in fact the
first component of the Poincar\'e return map $P$ to
$S$. Figure~\ref{L1D} shows the graph of this
one-dimensional transformation.

\begin{figure}
\psfrag{P}{$P_0$}\psfrag{Q}{$P_1$}
\includegraphics[width=4.5cm]{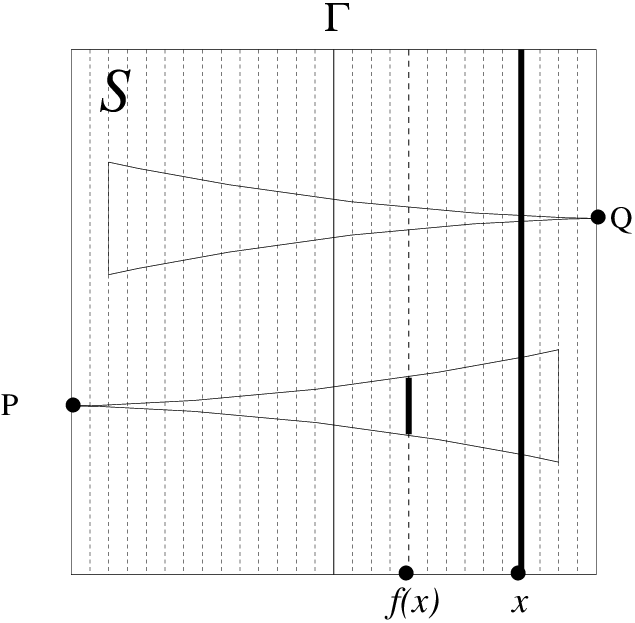}
$\qquad$
\includegraphics[width=4.4cm]{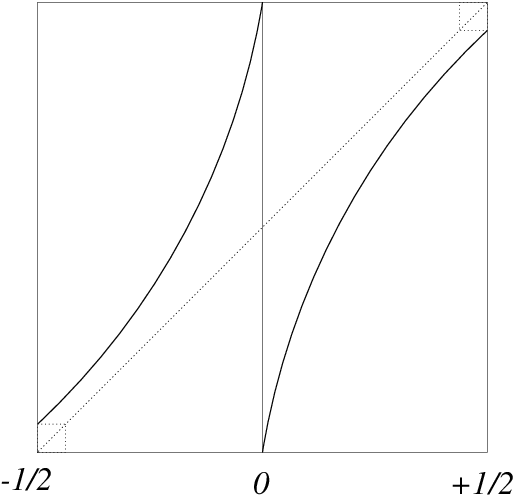}
\caption{\label{L1D} On the left: projection on $I$ through
  the stable leaves and a sketch of the image of one leaf
  under the return map. On the right: the Lorenz map $f$.}
\end{figure}

\subsection{Persistence and smoothness of the contracting
  foliation}
\label{sec:persist-contract-fol}

The following persistence property is a consequence of the
domination of the contraction along the $y$-direction over
the expansion along the $x$-direction (see
e.g. \cite{AraPac2010}).

\begin{theorem}
  \label{thm:persistent-foliation}
  Let $X$ be the vector field obtained in the construction
  of the geometric Lorenz model and $\cF_X$ the invariant
  contracting foliation of the cross-section $S$. Then any
  vector field $Y$ which is sufficiently $C^1$-close to $X$
  admits an invariant contracting continuous foliation
  $\cF_Y$ on the cross-section $S$ with $C^1$ leaves.
\end{theorem}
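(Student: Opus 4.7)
The plan is to construct the foliation $\cF_Y$ for a perturbation $Y$ of $X$ as the graph of a fixed point of a suitable graph transform, exploiting the fact that the domination estimate \eqref{eq:domination} for $P_X$ on the cross-section $S$ is an open condition in the $C^1$ topology.

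First I would fix the same global cross-section $S$ for both $X$ and all sufficiently $C^1$-close $Y$; this is possible because $S$ is a transverse section for $X$ and transversality is $C^1$-open, and because the trapping region $U$ from \eqref{eq:trap-reg-U} is robust. The continuation $\sigma_Y$ of the hyperbolic singularity persists and is still linearizable by Theorem~\ref{thm:smooth-linear}, so the local stable manifold $W^s_{loc}(\sigma_Y)\cap S$ defines a $C^1$-close continuation $\Gamma_Y$ of the singular line $\Gamma$. Outside $\Gamma_Y$ the first return map $P_Y:S\setminus\Gamma_Y\to S$ is well defined and $C^1$-close to $P_X$ on compact subsets of $S^*$. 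Although $DP_Y$ blows up near $\Gamma_Y$, the partial derivative along the vertical direction remains uniformly contracted by a factor $\le\lambda<1$, because this contraction comes from the factor $|x|^\beta$ in the linearized passage through the singularity \eqref{eq:passcross}, which depends continuously on the eigenvalues $\la_i(Y)$.

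Next I would set up an invariant stable cone field. Define, at each $p\in S\setminus\Gamma_Y$, the vertical cone $C^s_\alpha(p)=\{v=v_x\partial_x+v_y\partial_y: |v_x|\le \alpha|v_y|\}$. The domination \eqref{eq:domination} implies that for $P_X$ one has $DP_X(p)\cdot C^s_\alpha(p)\subset C^s_{\alpha/2}(P_X p)$ for every $p\in S^*$, with moreover vectors in $C^s_\alpha(p)$ being contracted by a definite factor bounded by $\lambda'<1$. Since this cone invariance is expressed by finitely many strict $C^1$-open inequalities at each point, the $C^1$-closeness of $P_Y$ to $P_X$ gives the same cone invariance and contraction estimate for $DP_Y$ at every $p\in S\setminus\Gamma_Y$.

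Then I would obtain the leaves of $\cF_Y$ by a standard graph transform. Let $\mathcal G$ be the complete metric space of continuous vertical foliations of $S$ whose leaves are graphs of $1$-Lipschitz functions $\psi:I\to I$ over the vertical coordinate, with the $C^0$ distance. The pull-back operator $T:\mathcal G\to\mathcal G$ sending a foliation $\cF$ to the foliation whose leaves are the connected components of $P_Y^{-1}(\cF$-leaves$)$, reconnected across $\Gamma_Y$ by adding the pre-images of $\Gamma_Y$ itself, is well defined by the cone invariance: the image is again a vertical Lipschitz foliation. The contraction of $DP_Y$ along vertical vectors together with the domination \eqref{eq:domination} makes $T$ a contraction on $\mathcal G$ (in the $C^0$-metric, the contraction factor is $\lambda'$), so $T$ admits a unique fixed point $\cF_Y$, which is the desired $P_Y$-invariant continuous foliation with uniformly contracting dynamics along its leaves.

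Finally, for the $C^1$ regularity of the leaves I would repeat the graph transform argument on the closed subspace $\mathcal G^1\subset\mathcal G$ of foliations whose leaves are graphs of $C^1$ functions with a uniform bound $K$ on $|D\psi|$, endowed with the $C^0$ metric. Differentiating $P_Y\circ\psi=\widetilde\psi\circ f_Y$ and using the cone invariance shows that $T$ preserves $\mathcal G^1$ for $K$ chosen in terms of the cone aperture $\alpha$, and the same contraction estimate as before shows that the fixed point lies in $\mathcal G^1$; continuity of $D\psi$ in the parameter labelling the leaf then gives the continuous foliation with $C^1$ leaves. The main obstacle throughout is the singular line $\Gamma_Y$, where the derivative of $P_Y$ blows up; this is handled by noting that the cone invariance and vertical contraction estimates depend only on the factor $|x|^\beta$ appearing in \eqref{eq:passcross}, which is integrable and uniformly bounded above away from $0$, so all estimates survive the passage to the limit under $T$.
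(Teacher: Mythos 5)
Your strategy — derive an invariant stable cone field from the domination estimate \eqref{eq:domination}, show the cone invariance is $C^1$-robust, and then run a graph transform to produce the invariant contracting foliation — is exactly the mechanism the paper relies on; the paper itself gives no argument beyond citing \cite{AraPac2010} and attributing the result to the domination, so your sketch is what the cited reference actually carries out. Two points, however, are glossed over in a way that would need repair before the argument closes.

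First, the operator $T$ is not obviously an endomorphism of $\mathcal G$. Given a foliation by full-height Lipschitz graphs $\psi:I\to I$, the preimage under $P_Y$ of a leaf $\xi$ is the set $\{(x,y)\in S^*: P_Y(x,y)\in\xi\}$; each connected component is indeed a Lipschitz graph over $y$ by cone invariance, but there is no a priori reason for it to be defined for all $y\in I$: near the ends of $I^\pm$ the curve can leave the domain of $P_Y$ before spanning the full height. This is the standard boundary issue with graph transforms on foliations, and it is usually circumvented by formulating the fixed-point problem not on foliations but on bounded sections of the stable cone bundle (a continuous line field inside $C^s_\alpha$, one line per point of $S$), where pullback is literally $p\mapsto DP_Y(p)^{-1}E(P_Y(p))$ and is manifestly well defined and a contraction; the foliation is then recovered by integrating the resulting line field. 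Second, the $C^1$ regularity step is too quick: preserving the class $\mathcal G^1$ with a uniform bound $K$ on $|D\psi|$ and taking a $C^0$ limit only gives Lipschitz leaves. To conclude $C^1$ one needs the fiber contraction theorem of \cite{HPS77} (or, equivalently, a contraction of the linearized graph transform acting on candidate derivatives), which again uses precisely the domination $|\partial_y g|\ll |Df|$ and is the mechanism the paper has in mind when it invokes normal hyperbolicity. Neither gap is fatal — both are handled in the cited sources — but as written the two fixed-point arguments do not quite close.
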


It can be shown that the holonomies along the leaves are in
fact H\"older-$C^1$ (see e.g \cite{AraPac2010}).  Moreover,
under a \emph{strong dissipative} condition on the
eigenvalues of the equilibrium $\sigma$, that is,
$\beta>\alpha+k$ for some $k\in\ZZ^+$ (recall equation
\eqref{eq:passcross}) it can be proved, following
\cite{HPS77} and \cite{Ro93}, that $\cF_Y$ is then a $C^k$
smooth foliation and so the holonomies along its leaves are
$C^k$ maps. This is just an application of $k$-normal
hyperbolicity to flows.  We deduce the following.

\begin{theorem}\label{thm:smooth-ss}
  For strongly dissipative Lorenz attractors, with
  $\beta>\alpha+k$, the one-dimensional quotient map $f$ is
  $C^k$ smooth away from the singularity.  Moreover, this
  smoothness property is also persistent for all nearby
  $C^k$ flows, since the condition $\beta>\alpha+k$ is open
  in the $C^1$ topology.
\end{theorem}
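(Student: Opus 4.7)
The plan is to promote the contracting foliation $\cF_Y$ from the merely continuous (with $C^1$ leaves) object provided by Theorem~\ref{thm:persistent-foliation} to a genuinely $C^k$ smooth foliation, and then deduce the $C^k$ smoothness of the quotient map $f_Y$ from the smoothness of $\cF_Y$ together with that of the Poincar\'e return map on $S^*$. The engine is the Hirsch--Pugh--Shub theory of $k$-normal hyperbolicity~\cite{HPS77}, applied along the lines of Rovella~\cite{Ro93} so as to accommodate the singular nature of the flow.

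First I would verify the $k$-domination needed for HPS. The Poincar\'e return map $P_Y$ is partially hyperbolic on $S^*$, with strong-stable direction tangent to $\cF_Y$ (the former $y$-direction) and a transverse direction along which $P_Y$ expands. By the explicit formulas from Sections~\ref{sec:first-return-map}--\ref{sec:propert-map-g}, derived from the linearized flow \eqref{eq:LinearLorenz} and the passage map \eqref{eq:passcross}, the stable contraction rate behaves like $|x|^\beta$ while the transverse expansion rate behaves like $|x|^{\alpha-1}$, with $|x|$ measuring the distance from $\Gamma=S\cap W^s_{loc}(\sigma)$. Integrating these rates along orbits through the linearized region, the strong dissipation hypothesis $\beta>\alpha+k$ translates precisely into the $k$-normal hyperbolicity inequality required by HPS. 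Their invariant foliation theorem then yields a $\cF_Y$ whose leaves are $C^k$ and whose holonomies between transversals to the foliation are $C^k$ maps.

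Next, on the open set $S^*$ the return map $P_Y$ is itself $C^k$: Theorem~\ref{thm:smooth-linear} applied to the $C^k$ vector field $Y\in\cU$ provides a $C^k$ linearization in a neighborhood of $\sigma_Y$ (this is where the non-resonance conditions \eqref{eq:non-resonance} of order $N(k)$ enter), while the flow outside this neighborhood is $C^k$ by hypothesis. The one-dimensional quotient map $f_Y:S^*/\cF_Y\to S/\cF_Y$ is obtained by composing $P_Y$ with a holonomy of $\cF_Y$; both ingredients are $C^k$, so $f_Y$ is $C^k$ away from the singularity, which is exactly the point $\Gamma/\cF_Y$ that has been removed. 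Persistence is then automatic: the quantities $\alpha=-\lambda_3/\lambda_1$, $\beta=-\lambda_2/\lambda_1$ depend continuously on $Y$ in the $C^1$ topology, the finite list of non-resonance relations is also a $C^1$-open condition, so after shrinking $\cU$ every $Y\in\cU$ satisfies $\beta(Y)>\alpha(Y)+k$ together with the linearizability conditions, and the construction goes through uniformly.

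The main obstacle, I expect, is carrying out the HPS $k$-domination estimates rigorously across the singular region, since the rates $|x|^\beta$ and $|x|^{\alpha-1}$ degenerate (respectively blow up) as $x\to 0$ and so the hypotheses of the classical invariant foliation theorem are not satisfied in a uniform manner at the cross-section level. The standard way around this, used in~\cite{Ro93}, is to do the normal-hyperbolicity bookkeeping for the flow itself (where the infinitesimal rates are uniformly bounded) and only then project the resulting invariant $C^k$ strong-stable foliation of the flow to $S$; the logarithmic integrability of the return time $\widetilde\tau$ ensures that the flow estimates integrate to the desired domination for the return map. Once this technical point is dispatched, Steps~1--3 above combine to give the stated result.
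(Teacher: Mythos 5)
Your proposal follows the same route as the paper's: invoke Hirsch--Pugh--Shub $k$-normal hyperbolicity in the flow setting, as in Rovella~\cite{Ro93}, to upgrade the strong-stable foliation to $C^k$ under the strong dissipativity condition $\beta>\alpha+k$, then realize the quotient map $f_Y$ as a composition of the $C^k$ Poincar\'e return map (available because of the $C^k$ linearization of Theorem~\ref{thm:smooth-linear}) with a $C^k$ holonomy, with persistence coming from the $C^1$-openness of both the eigenvalue inequality and the finitely many non-resonance relations. The paper itself offers little beyond the citations to \cite{HPS77} and \cite{Ro93}, so your fleshed-out version---in particular your observation that the $k$-domination bookkeeping must be done at the flow level in terms of the eigenvalues $\lambda_1,\lambda_2,\lambda_3$ rather than at the cross-section level where the rates $|x|^\beta$ and $|x|^{\alpha-1}$ degenerate---is fully aligned with what the paper intends.
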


In what follows we fix $k\ge2$, which guarantees that the
foliation $\cF_Y$ is $C^2$ for all $Y$ in a $C^2$
neighborhood $\cU$ of $X$. Moreover the one-dimensional
piecewise expanding map $f_Y$ given as the quotient map of
the corresponding Poincar\'e map $P_Y$ over the leaves of
the foliation $\cF_Y$ associated to $Y\in\cU$.

\subsection{Robust transitivity properties}
\label{sec:robust-transit-prope}

Here we show that the Lorenz flows constructed are suspension flows with a roof
function that is constant along the stable foliation on the cross-section $S$.
First we discuss the case of the geometric Lorenz flow $X^t$ constructed in
the previous section and then complete the proof of Proposition~\ref{pr:Lorenz-smooth-stable}.
 
First, we observe that $S$ is, for the  geometric
Lorenz flow $X^t$ constructed in the previous sections, a
collection of strong-stable leafs of the flow. Indeed, we
can write
\begin{align*}
  S=\bigcup_{-1/2\le x\le 1/2} W^{ss}_{X,1/2}(x,0,1)
\end{align*}
as a family of local strong-stable leaves for the vector
field $X$ with radius $1/2$ through the points in
$I\times\{0\}\times\{1\}$, the orthogonal segment to
$\Gamma$ in $S$;  see the left hand side of Figure~\ref{L1D}.
Since the strong-stable foliation $\cF_Y$ for $C^2$ close
vector fields $Y$ is also a $C^2$ foliation, then we can
repeat the construction with respect to every close vector field $Y$
and obtain a smooth surface
\begin{align*}
  S_Y:=\bigcup_{-1/2\le x\le 1/2} W^{ss}_{Y,1/2+\epsilon}(x,0,1)
\end{align*}
which is a cross-section for the flow $Y^t$, and contains
the continuation of the points $P_1,P_0$ as the first visits
of the branches of the unstable manifold $W^u_Y(\sigma_Y)$
of the singularity to the cross-section, for small enough
$\epsilon>0$; see Figure~\ref{L1D}.


Moreover we can, by the $C^2$ change of coordinates which
linearizes the flow around $\sigma_Y$, assume
first that the new singularity $\sigma_Y$ is still at the
origin, and that on $S_Y$ we have coordinates $(x,y)$ such
that $x=\const$ represents a curve on $S_Y$ which is
uniformly contracted by the Poincar\'e return map associated
to $Y$; in fact, $x=\const$ is precisely the strong-stable
manifold through $(x,0,1)$.
In particular, \emph{this ensures that the Poincar\'e return
  time of points on $S_Y$ to $S_Y$ in the same leaf of
  $\cF_Y$ is constant}, for all $Y$ sufficiently $C^2$ close
to $X$. This extends a very useful property of $X$ to all
nearby vector fields.

In what follows $|x|$ still represents the distance of the
curve $x=\const$ to $\Gamma_Y:=S_Y\cap W^{s}_{loc}(\sigma_Y)$,
the intersection of the local stable manifold of the
singularity with $S_Y$. The Poincar\'e return time of a
point $(x_0,y_0)$ in $S_Y^*:=S_Y\setminus\Gamma_Y$ equals
\begin{align}\label{eq:tau-pert}
  \tau_Y(x_0,y_0)=-\frac{1}{\lambda_1(Y)}\log|x_0| + s_Y(x_0),
\end{align}
where $s_Y:S_Y^*\to\RR$ is $C^2$ close to $s_X=\const$.

Let $f_Y$ be the one-dimensional map as the quotient
map of the corresponding Poincar\'e map $P_Y$ over the leaves
of the foliation $\cF_Y$ for all flows $Y$ close to $X$ in
the $C^1$ topology as above. 
Since the leaves of $\cF_Y$ are $C^1$ close to
those of $\cF$, it follows that $f_Y$ is $C^1$ close to $f$
and thus there exists $c\in[-1/2,1/2]$ which plays for $f_Y$
the same role of the singular point at $0$, so that, after a
linear change of coordinates, we can assume that $c=0$ and
properties (f2)-(f4) from
Subsection~\ref{sec:propert-one-dimens} are still valid,
albeit with different constants, for $f_Y$ on a subinterval
$[-b_0,b_1]$ for some $0<b_0,b_1<1/2$ close to $1/2$. 
In particular
\begin{align}\label{eq:Dfapprox}
  Df_Y(x)\approx |x|^{\alpha-1} \quad
  \text{i.e.}\quad
  \frac1C\le\frac{Df_Y(x)}{|x|^{\alpha-1}}\le C
\end{align}
for some $C>1$ uniformly on a $C^2$ neighborhood of $X$,
where $\alpha=\alpha(Y)=-\lambda_3(Y)/\lambda_1(Y)$
depends smoothly on vector field.
Finally, the condition (f3) ensures that $f_Y$ has enough expansion
to easily prove that \emph{every $f_Y$ is locally eventually
  onto for all $Y$ close to $X$}. More precisely,
  
\begin{lemma}{\cite[Lemma 3.16]{AraPac2010}}
  \label{le:leo}
  For any interval $J\subset(-b_0,b_1)$ there exists an iterate
  $n\ge1$ such that $f_Y^n(J)=(0,b_1)$ or $f_Y^n(J)=(-b_0,0)$;
  and the next iterate covers $(f(-b_0),f(b_1))$.
\end{lemma}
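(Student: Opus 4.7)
Let $J \subset (-b_0, b_1)$ be a non-degenerate interval and write $J_n := f_Y^n(J)$ for $n \ge 0$. The plan is to split the analysis into two phases: a \emph{growth phase}, during which $J_n$ avoids the discontinuity and expands geometrically by factor $\ge \sqrt 2$, and a \emph{covering phase}, after $J_n$ first straddles $0$, during which one uses the one-sided limits $f_Y(0^-)=b_1$, $f_Y(0^+)=-b_0$ together with the derivative blow-up near $0$ to force $J_n$ to cover a full half-domain in finitely many steps.

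First I would show that there exists a smallest $n_0 \ge 0$ with $0 \in \operatorname{int}(J_{n_0})$. Suppose for contradiction that no such $n_0$ exists; then on each $J_n$ the map $f_Y$ is a $C^k$ diffeomorphism and property (f3) gives $|J_{n+1}|\ge\sqrt 2\,|J_n|$ at every step. Since $|J_n|\le b_0+b_1<1$ is bounded, this is impossible. The borderline case where $0$ lies on the boundary of some $J_n$ is absorbed into one further iterate using the blow-up $Df_Y(x)\asymp|x|^{\alpha-1}$ from Remark~\ref{rmk:non-degenerate}, which guarantees that a one-sided sliver near $0$ is stretched across a definite-length neighborhood of $b_1$ or $-b_0$.

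Next, having $0 \in \operatorname{int}(J_{n_0})$, I split $J_{n_0} = J_{n_0}^- \cup \{0\} \cup J_{n_0}^+$ with $J_{n_0}^\pm = J_{n_0}\cap I^\pm$. The one-sided limits $f_Y(0^-)=b_1$ and $f_Y(0^+)=-b_0$ combined with monotonicity of each branch yield that $f_Y(J_{n_0}^-)$ is an interval whose closure contains $b_1$ and $f_Y(J_{n_0}^+)$ is an interval whose closure contains $-b_0$. I pick whichever piece has length $\ge |J_{n_0}|/2$, call its image $K$, and iterate. At each subsequent step $K$ is an interval with one endpoint anchored at $b_1$ or $-b_0$: if $0 \notin \operatorname{int}(K)$ then (f3) stretches $K$ by $\sqrt 2$ without destroying the anchor, while if $0 \in \operatorname{int}(K)$ a single further application of $f_Y$ produces, via the blow-up, an anchored sub-interval of substantially larger length. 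A finite-stage bookkeeping argument then shows that this anchored image must eventually cover all of $(-b_0,0)$ or $(0,b_1)$.

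Once $f_Y^n(J) \supset (-b_0,0)$ (say), applying $f_Y$ one more time yields $f_Y^{n+1}(J) \supset f_Y((-b_0,0)) = (f(-b_0),b_1) \supset (f(-b_0),f(b_1))$, as claimed. The main obstacle I anticipate is the covering phase itself: after a split at $0$ a naive accounting gives at most factor $\sqrt 2$ expansion against halving, yielding no net length gain. The resolution is to exploit the power-law blow-up of $Df_Y$ near $0$: an anchored sub-interval that re-crosses $0$ must have its newly exposed side spanning a macroscopic neighborhood of $b_1$ or $-b_0$, providing a uniform lower bound on anchored length after each split that lets the induction terminate in a bounded number of steps.
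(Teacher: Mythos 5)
Your overall structure (growth phase followed by a covering phase once $0$ enters the interior) matches the standard argument, and you have correctly isolated the central difficulty: splitting at $0$ halves the interval while a single iterate only multiplies by $\sqrt 2$, giving no immediate net gain. However, the two ingredients you propose to close this gap do not work.

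First, the claim that iterating $f_Y$ ``stretches $K$ by $\sqrt 2$ without destroying the anchor'' is false. If $K=(c,b_1)$ is anchored at $b_1$ with $0\notin K$, then $f_Y(K)=(f_Y(c),f_Y(b_1))$, and $f_Y(b_1)\ne b_1$: in the construction one has $f(1/2)=a/2^{\alpha}-1/2<1/2$ because $a<2^{\alpha}$ strictly (see \eqref{eq:alphachoice1}), and for a perturbed $f_Y$ the endpoints are certainly not fixed points. The anchor exists only at the single step immediately following a split and is destroyed by the very next iterate; your inductive bookkeeping leans on a persistence property that does not hold. Second, the blow-up of $Df_Y$ near $0$ does not supply the ``macroscopic'' lower bound you need: if the piece of $J_{n_0}$ with $0$ as endpoint has length $\ell$ (which can be arbitrarily small, and need not be the longer half you selected), its image under $f_Y$ has length comparable to $a\ell^{\alpha}$, which still tends to $0$ with $\ell$; no uniform threshold is obtained.

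The gap is closed differently. The key is that the anchored interval $J_{k+1}=f_Y(\hat J_k)$ produced immediately after a split at step $k$ has one endpoint equal to $b_1$ or $-b_0$; hence if $0\in\operatorname{int}(J_{k+1})$ one is already done, so otherwise the next split cannot occur before step $k+2$. Thus every ``cycle'' between consecutive splits contains at least two applications of $f_Y$, so the length is multiplied per cycle by at least $(\inf Df_Y)^2/2$. Since $\inf Df_Y>\sqrt 2$ \emph{strictly} (the minimum of $Df_Y$ is attained at the endpoints and by \eqref{eq:alphachoice1} equals $a\alpha 2^{1-\alpha}>\sqrt 2$, a condition stable under $C^1$ perturbation), this factor is strictly greater than $1$. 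Since the lengths are bounded by the diameter of the domain, only finitely many cycles can occur, so at some split the anchored image must straddle $0$ and hence contain $(0,b_1)$ or $(-b_0,0)$. This is the argument of \cite[Lemma 3.16]{AraPac2010}: strict expansion beyond $\sqrt 2$ together with the observation that one cannot split on two consecutive steps unless one has already covered a half-domain, rather than anchor persistence or derivative blow-up.
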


In particular, this implies that $f_Y$ is transitive and, as
well known, $\Lambda_Y$ turns out to be
transitive also. \emph{So we have a robust transitive
  attractor on a $C^2$ neighborhood of $X$.}

\begin{remark}
  \label{rmk:densepreorbit}
  Lemma~\ref{le:leo} implies also that every given point $q$
  of $(-b_0,b_1)$ belongs to the some positive image $f^n(J)$ of
  any given interval $J\not\ni0$, for some $n>0$. Hence
  \emph{every $q\in(-b_0,b_1)$ has a dense set of pre-images},
  that is $\overline{\cup_{k\ge0} f_Y^{-k}(\{q\})}=[-b_0,b_1]$.
\end{remark}

Taken together with the results in the previous subsections we proved
Proposition~\ref{pr:Lorenz-smooth-stable}.

\begin{remark}
  \label{rmk:delta-dense}
  Given $\delta>0$ there exists an integer $N$ such that
  $O_N(Y):=\cup_{i=1}^N (f_Y^i)^{-1}(\{0\})$ is
  $2\delta$-dense in $(-b_0,b_1)$. Moreover, by slightly
  modifying the return to $S$ from $\Sigma^{\pm}$ (see
  Section~\ref{sec:rotating-effect}) during the construction
  of $X$, we can assume that the singular point $0$ of $f_Y$
  does not belong to $O_N(Y)$ and $O_N(Y)$ is $\delta$-dense
  in $I$, for each vector field $Y$ in a $C^2$ neighborhood
  $\cU$ of the original geometric Lorenz flow $X$.
\end{remark}

We can, by another smooth change of coordinates, assume
without loss of generality that both $b_0,b_1$ equal $1/2$
in what follows.

\section[Good hyperbolic skew-product]{The geometric Lorenz
  flow is a good hyperbolic skew-product}
\label{sec:geometr-lorenz-flow-1}

Now we explain step by step how to obtain all the required
properties to conclude that the geometric Lorenz flows
constructed in Section~\ref{sec:geometr-lorenz-flow} are
good hyperbolic skew-products.

We fix $Y$ a vector field in a $C^2$ neighborhood $\cU$ of
$X$, with a geometric Lorenz attractor $\Lambda$ in a
trapping region $U$, which contains the image of the
cross-section $S$ under the flow $Y^t$ until its points
first return to $S$. We denote by $P_Y:S_Y^*\to S_Y$ the
associated Poincar\'e first return map, and by
$f_Y:I\setminus\{0\}\to I$, with $I=[-1/2,1/2]$, the
corresponding Lorenz map obtained by the action of $P_Y$ on
the leaves $\cF_Y$ of the contracting foliation on $S^*=S^*_Y$. We also set
$\tau=\tau_Y:S^*\to[\tau_0,+\infty)$ the first return time
function associated to $P=P_Y$ so that $P(w)=Y^{\tau(w)}(w),
w\in S^*$.

\subsection{The uniformly expanding Markov map}
\label{sec:uniformly-expand-mar}

We show that a Lorenz map $f=f_Y$ admits an induced map $F$
on a small interval $\Delta\subset I$ which is a uniformly
expanding Markov map. Induced Markov transformations for
$C^{1+\alpha}$ Lorenz transformations were obtained first by
\cite{KDO06} but that construction is not suitable for our
estimates. Indeed, a crucial property, the
  uniform non-integrability, is obtained in
  Subsection~\ref{sec:uniform-non-integr} using essentially
  that $Dr(x)\approx|x|^{-1}$ while
  $Df(x)\approx|x|^{\alpha-1}$ for $x$ near the zero, so
  their growth rates are significantly different near the
  singularity. To take advantage of this we build an induced
map on an interval $\Delta$ which is an open neighborhood of
the singular point at the origin. To the best of our
knowledge this type of construction is not available in the
literature, but can be obtained using a number of other
results as follows.

We assume that $f$ satisfies the
properties given in Section~\ref{sec:propert-one-dimens} and
follow the exposition in
\cite{alves-luzzatto-pinheiro2004,gouezel}.  Let
$b$ be a fixed constant satisfying $0 < b <
\min\{1/2,1/(4|1-\alpha|)\}$.

We start by observing that $f$ is a local diffeomorphism
away from $0$ and behaves like a power of the distance to
the singular set; see Remark~\ref{rmk:non-degenerate}. This
says that $0$ is a ``non-degenerate singularity'' according
to \cite{ABV00}, a concept generalized from one-dimensional
dynamics.

Given $\xi,\sigma\in(0,1)$ and $\delta>0$, we say that $n$
is a {\em $(\sigma,\delta)$-hyperbolic time} for a
point $x\in I$ if, for all $1\le k \le n$
\begin{align*}
  \prod_{j=n-k}^{n-1}\|Df(f^j(x))^{-1}\| \le \sigma^{ k} \qand
  |f^{n-k}(x)|_\delta\ge \sigma^{b  k},
\end{align*}
where $|z|_\delta=|z|$ if $|z|<\delta$ and $|z|_\delta=1$ otherwise.

We present well-established results showing that (i) if \( n
\) is a hyperbolic time for \( x \), the map \( f^{n} \) is
a diffeomorphism with uniformly bounded distortion on a
neighborhood of \( x \) that is mapped to a disk of uniform
radius; (ii) Lebesgue almost every point has many hyperbolic times
for the one-dimensional Lorenz transformation $f$.  We say
that $f^n$ has \emph{bounded distortion} by a factor $D$ on
a set $V$ if, for every $x,y\in V$,
$$ \frac{1}{  D} \le \frac{|Df^n(x)|}{|Df^n (y)|}\le D \,.  $$

\begin{lemma}\label{le:hyp-time1}
  Given $\sigma\in(0,1)$ and $\delta>0$, there
exist $\delta_1, D_{1},\kappa   >0$ (depending only on
$\sigma,\delta$ and on the map $f$) such that for any $x\in M$ and
\( n\geq 1 \) a $(\sigma,\delta)$-hyperbolic time for \( x \),
there exists a neighborhood \( V_n(x) \) of \( x \) with the
following properties:
\begin{enumerate}
\item $f^{n}$ maps $V_n(x)$ diffeomorphically onto the ball
$B(f^{n}(x), \delta_1 )$;
\item for $1\le k <n$ and $y, z\in V_n(x)$, $
  |f^{n-k}(y)-f^{n-k}(z)| \le \sigma^{ k}|f^{n}(y)-f^{n}(z)|$;
\item $f^n$ has distortion bounded by $ D_1$ on
  $V_n(x)$;
\item 
\(V_n(x) \subset B(x, 2\delta_1 \sigma^{ n}) \).
\end{enumerate}
\end{lemma}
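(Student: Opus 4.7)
The plan is to follow the now-standard template for producing hyperbolic times in non-uniformly expanding dynamics with a non-degenerate singular set, as in \cite{alves-luzzatto-pinheiro2004,gouezel}, specialised to the one-dimensional Lorenz map. First I would fix a small $\delta_1>0$, depending on $\sigma$, $\delta$, $\alpha$ and the distortion constants of $f$, and define $V_n(x)$ as the connected component of $f^{-n}\bigl(B(f^n(x),\delta_1)\bigr)$ containing $x$. The four conclusions (1)--(4) will be proved simultaneously by downward induction on $k$, establishing at each level the analogous statements for the iterate $f^{n-k}$ on $f^k(V_n(x))$.

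For the backward contraction, I would show by induction that $\diam f^k(V_n(x))\le \delta_1\sigma^{n-k}$. The definition of $(\sigma,\delta)$-hyperbolic time gives $\prod_{j=n-k}^{n-1}|Df(f^j(x))|^{-1}\le\sigma^k$; combined with the uniform bounded distortion (proved in parallel), this transports derivative estimates from the orbit of $x$ to the whole of $f^k(V_n(x))$ and yields item (2). The crucial choice of $\delta_1$ is dictated by the requirement that $\sigma^{n-k}\delta_1<\tfrac{1}{2}\sigma^{b(n-k)}$ for every $0\le k<n$, which holds as soon as $\delta_1<1/2$ because $b<1$; this forces the pullbacks $f^k(V_n(x))$ to remain in the half-space $\{|z|\ge\tfrac12\sigma^{b(n-k)}\}$ away from the singularity $0$, so that $f^{n-k}$ is indeed a diffeomorphism on $f^k(V_n(x))$ and item (1) follows.

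Bounded distortion (item (3)) is obtained by the telescoping estimate
\begin{align*}
\log\frac{|Df^n(y)|}{|Df^n(z)|}
=\sum_{k=0}^{n-1}\bigl(\log|Df(f^k(y))|-\log|Df(f^k(z))|\bigr),
\end{align*}
bounding each term via Remark~\ref{rmk:non-degenerate} by $\tfrac{1-\alpha}{|f^k(x)|_\delta}|f^k(y)-f^k(z)|$, and then inserting the backward contraction together with the hyperbolic-time lower bound $|f^k(x)|_\delta\ge\sigma^{b(n-k)}$. The resulting geometric sum
\begin{align*}
\sum_{k=0}^{n-1}\frac{(1-\alpha)\,\sigma^{n-k}\delta_1}{\sigma^{b(n-k)}}
=(1-\alpha)\delta_1\sum_{k=0}^{n-1}\sigma^{(1-b)(n-k)}
\end{align*}
converges uniformly in $n$ because $b<1/2<1$, producing the constant $D_1$. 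Finally, item (4) follows from the mean value theorem: for any $y\in V_n(x)$ the distortion bound gives $|Df^n(\xi)|\ge |Df^n(x)|/D_1\ge \sigma^{-n}/D_1$ on the segment joining $x$ and $y$, so $|y-x|\le D_1\delta_1\sigma^n$, which we absorb into the $2\delta_1$ in the statement by reducing $\delta_1$ if necessary.

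The main obstacle is the circular dependence between items (1), (2) and (3): the distortion estimate uses the backward contraction to sum a geometric series, the backward contraction uses distortion to compare $|Df^k|$ at different points, and the diffeomorphism conclusion (1) needs both in order to rule out that the pullbacks meet the singular point~$0$. The standard resolution, which I would implement carefully here, is to carry out the induction with all three properties proved together at each step, using the fact that at level $k$ one only needs the already-established versions of (2) and (3) on $f^{k+1}(V_n(x))$, together with the hyperbolic-time condition at the one further iterate, to extend them to $f^k(V_n(x))$.
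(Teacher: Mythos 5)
Your proposal correctly reconstructs the standard hyperbolic-time argument of Alves--Bonatti--Viana, which is exactly what the paper invokes: its proof of this lemma simply cites Lemma~5.2 and Corollary~5.3 of \cite{ABV00} for items~(1)--(3) and deduces (4) from (2). The only cosmetic difference is that you obtain (4) via the mean value theorem and the distortion constant $D_1$ (with the observation that $D_1\to1$ as $\delta_1\to0$), whereas the paper reads it off directly from the backward contraction together with the uniform expansion $|Df|>\sigma^{-1}$; both are fine.
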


\begin{proof}
  For the proofs of items 1, 2, 3 see Lemma 5.2 and
  Corollary 5.3 in \cite{ABV00}.  Item~4 is an immediate
  consequence of the backward contraction property at item~2.
\end{proof}

We say that the sets \( V_n(x) \) are \emph{hyperbolic
  pre-balls} and their images \( f^{n}(V_n(x)) \) are
\emph{hyperbolic balls}; the latter are indeed balls of
radius \( \delta_1 \).
For the existence of hyperbolic times, we observe that
\begin{itemize}
\item $f$ is a $C^2$ piecewise expanding map since $|Df|>\sqrt2>1$;
\item It follows from \cite{KDO06} that $C^{2}$ Lorenz
  transformations have an unique absolutely continuous
  invariant measure $\nu_0$ with Lyapunov exponent
  $\lambda(\nu_0)=\int \log|f'| \, d\nu_0>0$ whose basin
  $B(\nu_0)$ covers Lebesgue almost every point;
\item Since $|Df(x)|\approx |x|^{\alpha-1}$ behaves like a power of the distance to the singular point 
then $\log|x|$ is $\nu_0$-integrable and, by the Ergodic Theorem, for every $\epsilon>0$ we can find 
$\delta>0$ such that
  \begin{align}
    \label{eq:slow-approx}
    \lim_{n\to+\infty}\frac1n\sum_{i=0}^{n-1}-\log|f^i(x)|_\delta
    =
    \int_{-\delta}^{\delta}-\log|x|\,d\nu_0(x)<\epsilon
  \end{align}
  for every $x\in B(\mu)$, thus for Lebesgue almost every $x$.
\end{itemize}
Condition \eqref{eq:slow-approx} is known as a \emph{slow
  recurrence condition} on the singular set $\{0\}$ of
$f$. Under these conditions, together with the
non-degeneracy property given by
Remark~\ref{rmk:non-degenerate}, we have

\begin{lemma}\label{le:hyp-time2}
  There exists \( \theta>0 \) and
  $0<\delta<1$ (depending only on $f$ and the expanding
  rate $\sqrt2$) such that, for Lebesgue almost every \( x
  \in I \), we can find $n_0\ge1$ satisfying: for each
  $n>n_0$ there are $(\sigma,\delta)$-hyperbolic times
  $1 \le n_1 < \cdots < n_l \le n$ for \( x \) with
  $l\ge\theta n$.
\end{lemma}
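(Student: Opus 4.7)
The plan is to verify the two conditions defining a $(\sigma,\delta)$-hyperbolic time separately, exploiting the fact that the expansion condition comes for free while the slow-recurrence condition is where the content lies. Fix $\sigma$ with $1/\sqrt{2}<\sigma<1$. By property (f3) of Subsection~\ref{sec:propert-one-dimens} we have $|Df(x)|>\sqrt{2}$ uniformly on $I\setminus\{0\}$, hence $\|Df(y)^{-1}\|<1/\sqrt{2}<\sigma$ for every $y$ in the domain; consequently the inequality $\prod_{j=n-k}^{n-1}\|Df(f^j(x))^{-1}\|\le \sigma^{k}$ holds trivially for every $n$ and every $1\le k\le n$. The entire task reduces to producing, at Lebesgue-a.e.\ $x$, a positive density of indices $n$ for which $|f^{n-k}(x)|_\delta\ge \sigma^{bk}$ holds for all $1\le k\le n$ simultaneously.

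For the latter I would run a Pliss-type counting argument. Set $h_j:=-\log|f^j(x)|_\delta\ge 0$ and $\beta:=-b\log\sigma>0$. Condition (b) fails at a candidate time $m$ precisely when some $j<m$ satisfies $h_j>\beta(m-j)$; equivalently, each index $j$ ``blocks'' at most $h_j/\beta$ of the consecutive integers immediately following it. Hence the number of blocked indices inside $\{1,\dots,n\}$ is bounded by $\beta^{-1}\sum_{j=0}^{n-1}h_j$. Since $\log|x|$ is $\nu_0$-integrable and $\nu_0$ is the unique absolutely continuous invariant measure (hence ergodic), given $\varepsilon>0$ we can pick $\delta>0$ so that $\int_{-\delta}^{\delta}-\log|x|\,d\nu_0<\varepsilon$; then \eqref{eq:slow-approx} together with the fact that $B(\nu_0)$ has full Lebesgue measure yields $\sum_{j=0}^{n-1}h_j<\varepsilon n$ for all $n\ge n_0(x)$, at Lebesgue-a.e.\ $x$. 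Fixing $\varepsilon<\beta$ from the start, the unblocked set has at least $(1-\varepsilon/\beta)n$ elements, each of which is automatically a $(\sigma,\delta)$-hyperbolic time by the first paragraph, so $\theta:=1-\varepsilon/\beta>0$ works.

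The main obstacle I anticipate is calibrating the constants: the parameter $b$ is already constrained by Lemma~\ref{le:hyp-time1} (essentially $b<1/(4|1-\alpha|)$ is imposed for the distortion control), which fixes $\beta=-b\log\sigma$ once $\sigma$ is chosen just above $1/\sqrt{2}$. The saving grace is that \eqref{eq:slow-approx} can make $\varepsilon$ arbitrarily small as $\delta\downarrow 0$, so the threshold $\varepsilon<\beta$ can always be achieved by shrinking $\delta$. Up to this calibration, everything else---the linearity of the blocking bound and the application of Birkhoff's theorem on $B(\nu_0)$---is routine, and matches the arguments in \cite{alves-luzzatto-pinheiro2004} to which the reader was already directed.
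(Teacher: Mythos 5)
Your argument is correct and is essentially the proof behind the reference the paper cites: the paper's own ``proof'' of Lemma~\ref{le:hyp-time2} is a one-line citation to Lemma~5.4 of \cite{ABV00} together with the remark $\sigma=1/\sqrt2$, and that lemma is proved by exactly the Pliss-type counting you run. The one simplification you make, correctly, is to observe that since $|Df|>\sqrt 2$ uniformly the expansion half of the hyperbolic-time definition holds automatically for every $n$ and every $k$, so the Pliss count need only be applied to the recurrence condition via $h_j:=-\log|f^j(x)|_\delta$; in the general non-uniformly expanding setting of \cite{ABV00} both conditions require Pliss-style bookkeeping, which is why the cited lemma is stated in greater generality than needed here. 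One cosmetic point: you fix $\sigma\in(1/\sqrt2,1)$, while the paper's remark takes $\sigma=1/\sqrt2$; either works because $\|Df^{-1}\|<1/\sqrt2$ is strict, so the product estimate $\prod_{j=n-k}^{n-1}\|Df(f^j(x))^{-1}\|\le\sigma^k$ already holds with $\sigma=1/\sqrt2$.
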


\begin{proof}
  See Lemma 5.4 of \cite{ABV00}.  
  Let us remark that here we have $\sigma=1/\sqrt2\approx 0.707$.
\end{proof}

Notice that the constants $\alpha,\beta$ and the
lower bound for the expansion rate $\sqrt2$ vary slightly in a
$C^2$ neighborhood $\cU$ of the geometric Lorenz flow $X$.
Likewise, the value of $\delta_1$ from Lemma~\ref{le:hyp-time1} depends
continuously $\alpha,\beta$ and $\inf|Df|$, and so we can assume that
$\delta_1=\delta_1(Y)>\underline\delta_1$ for some uniform
constant $\underline\delta_1>0$ for all $Y\in\cU$.

\emph{Hence, from Remark~\ref{rmk:delta-dense} and
  Section~\ref{sec:persist-contract-fol}, we obtain a
  neighborhood $\cU$ of the geometric Lorenz flow $X$ in
  $\fX^2(M)$ such that, for all $Y\in\cU$, the set $O_N(Y)$
  of $N$-pre-images of $\{0\}$ under $f_Y$ is
  $\underline\delta_1/3$-dense in $I$ and does not contain
  the singular point.} Thus the point $0$ and the map $f$
satisfy all the conditions needed to perform the
construction of an induced uniformly expanding Markov map
$F$ from a neighborhood $\Delta=(-a,a)$ of $0$ to itself, as
presented in \cite{alves-luzzatto-pinheiro2004,gouezel}. More precisely,

\begin{theorem}\label{thm:markov}
  There exists a neighborhood $\Delta:=(-a,a)$, for some
  $0<a<1/2$, of the singular point $0$; a countable Lebesgue
  modulo zero partition $\cQ$ of \( \Delta \)
  into sub-intervals; a function \( R: \Delta
  \to \ZZ^+ \) defined almost everywhere, constant on
  elements of the partition \( \cQ \); and constants
  \( c >0, \kappa>1 \) such that, for all \(
  \omega\in\cQ \) and \( R=R(\omega) \),
  the map \( F:=f^{R}:\omega \to \Delta \)
  is a \( C^{2} \) diffeomorphism, satisfies the bounded
  distortion property and is uniformly expanding: for each
  \( x,y\in\omega \)
  \begin{align*}
    \left| \frac{Df^{R}(x)}{Df^{R}(y)} -1 \right| \le c
    |f^{R}(x)-f^{R}(y)|
    \quad\text{and}\quad
    |f^{R}(x)-f^{R}(y)|>\kappa|x-y|.
  \end{align*}
  Moreover, for each $\omega\in\cQ$ there exists $0<k\le N$
  such that $n:=R(\omega)-k$ is a
  $(\sigma,\delta_1)$-hyperbolic time for each
  $x\in\omega$; $\omega\subset V_{n}(x)$ and, in addition,
  $f^j(\omega)\subset I\setminus\Delta$ for all $n\le j <
  R(\omega)$.
\end{theorem}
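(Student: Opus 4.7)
My plan is to adapt the Alves--Luzzatto--Pinheiro induction scheme of \cite{alves-luzzatto-pinheiro2004,gouezel} to the one-dimensional Lorenz map $f=f_Y$, using the singular point $0$ as the center of the base. First I would fix $a\in(0,1/2)$ small enough that $2a<\underline\delta_1$ and both endpoints $\pm a$ lie in $O_N(Y)=\bigcup_{i=1}^N f^{-i}(\{0\})$; this is possible since by Remark~\ref{rmk:delta-dense} the set $O_N(Y)$ is $\underline\delta_1/3$-dense in $I$ and avoids $0$. Set $\Delta=(-a,a)$.

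For Lebesgue-a.e.\ $x\in\Delta$, Lemma~\ref{le:hyp-time2} produces a positive-density sequence of $(\sigma,\delta)$-hyperbolic times $n_1<n_2<\cdots$. At each hyperbolic time $n_j$, Lemma~\ref{le:hyp-time1} provides a hyperbolic pre-ball $V_{n_j}(x)$ on which $f^{n_j}$ is a diffeomorphism onto $B(f^{n_j}(x),\delta_1)$ with bounded distortion $D_1$ and backward contraction $\sigma^{n_j-\ell}$ up to time $\ell<n_j$. Since $O_N(Y)$ is $\underline\delta_1/3$-dense in $I$, there exist a minimal $0<k\le N$ and some $q\in f^{-k}(\{0\})\cap B(f^{n_j}(x),\underline\delta_1/3)$, and the connected component of $f^{-k}(\Delta)$ around $q$ is contained in $B(f^{n_j}(x),\delta_1)$ because $2a<\underline\delta_1$ and the iterates $f(\Delta),\dots,f^{k-1}(\Delta)$ stay in $I\setminus\Delta$ by minimality of $k$. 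Pulling this component back by $f^{n_j}$ gives a candidate interval $\omega_x\subset V_{n_j}(x)$; I assign $\omega_x$ to $\cQ$ using the smallest $n_j$ for which $\omega_x$ is not already contained in a previously defined element, and set $R(\omega_x)=n_j+k$.

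The distortion inequality $|Df^R(x)/Df^R(y)-1|\le c|f^R(x)-f^R(y)|$ on $\omega\in\cQ$ combines the hyperbolic-time distortion bound for $f^n$ on $V_n(x)$ with a standard $C^2$ estimate for the final $k\le N$ iterates; the latter is uniform because $f^n(\omega),\dots,f^{R-1}(\omega)$ stays in $I\setminus\Delta$, where $\log|Df|$ is uniformly Lipschitz by Remark~\ref{rmk:non-degenerate}. The expansion bound $|f^R(x)-f^R(y)|\ge\kappa|x-y|$ follows from the backward contraction by $\sigma^n$ on $V_n(x)$ together with $|Df^k|\ge(\sqrt2)^k\ge1$ coming from property (f3); uniform $\kappa>1$ is achieved by absorbing the finitely many ``too-short'' branches with small $n_j$ into further refinements at later hyperbolic times. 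The last statement of the theorem, namely $\omega\subset V_n(x)$ and $f^j(\omega)\subset I\setminus\Delta$ for $n\le j<R(\omega)$, is built into the construction by the minimality of $k$.

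The main obstacle is verifying that $\cQ$ is in fact a Lebesgue-mod-zero partition of $\Delta$. Pairwise disjointness rests on the standard ``no-conflict'' lemma of Alves--Luzzatto--Pinheiro: whenever two candidate intervals $\omega_x$ (at time $n+k$) and $\omega_{x'}$ (at time $n'+k'$, with $n<n'$) overlap, one shows $\omega_{x'}\subset\omega_x$ since $f^{R(\omega_x)}|\omega_x$ is already a diffeomorphism onto $\Delta$, so $\omega_{x'}$ was never needed. The full-measure property then reduces to the fact that Lebesgue-a.e.\ $x\in\Delta$ has a positive-density sequence of hyperbolic times and, at each one, the density of $O_N(Y)$ guarantees success within at most $N$ extra iterates. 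The delicate point, specific to the Lorenz setting, is controlling the passage of the pre-ball $V_n(x)$ near the singularity: here the choices $2a<\underline\delta_1$ and the $\underline\delta_1/3$-density of $O_N(Y)$ are essential, ensuring that the extra $k\le N$ iterates do not force an uncontrolled crossing of $0$, and that the whole candidate $\omega_x$ (not merely $x$) sits safely inside $V_n(x)$ with its $f^j$-images avoiding $\Delta$ for $n\le j<R$.
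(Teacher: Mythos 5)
Your overall approach is the one the paper itself takes: verify the hypotheses of the Alves--Luzzatto--Pinheiro/Gou\"ezel inducing scheme (non-degenerate singularity, slow recurrence, positive Lyapunov exponent, $\underline\delta_1/3$-density of $O_N(Y)$) and then construct the induced Markov map on a small interval around $0$ using hyperbolic pre-balls and a bounded-depth return to $\Delta$. However, your attempt to fill in the construction has two concrete problems.

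First, the condition $\pm a\in O_N(Y)$. Since $O_N(Y)$ is a countable set with no symmetry for a generic perturbation $Y$ (property (f1) is explicitly stated not to persist under perturbation), you cannot in general find a single $a$ with both $a$ and $-a$ in $O_N(Y)$; this would force you to replace $\Delta=(-a,a)$ by an asymmetric interval $(a^-,a^+)$, and the remaining steps should then be checked again. More importantly, this hypothesis is not actually used in the rest of your argument, so it adds tension without buying anything; the density of $O_N(Y)$ near the hyperbolic ball is what matters, not the arithmetic of the endpoints of $\Delta$.

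Second, and more seriously, your justification of the property $f^j(\omega)\subset I\setminus\Delta$ for $n\le j<R(\omega)$ is incorrect. You deduce it from ``minimality of $k$,'' where $k$ is the smallest integer in $\{1,\dots,N\}$ for which some $q\in f^{-k}(\{0\})$ lies in $B(f^n(x),\underline\delta_1/3)$. But minimality of $k$ as a preimage-depth of $0$ only tells you that $f^j(q)\ne 0$ for $0\le j<k$; it says nothing about whether $f^j(q)$, let alone the whole component $W$ of $f^{-k}(\Delta)$ around $q$, avoids the entire interval $\Delta$. (Your parenthetical mention of ``the iterates $f(\Delta),\dots,f^{k-1}(\Delta)$ stay in $I\setminus\Delta$'' also cannot be right as written: $f^j(\Delta)$ grows quickly and will certainly intersect $\Delta$ for moderate $j$.) This is exactly the place where the ALP/Gou\"ezel construction does extra combinatorial work --- one must take the \emph{first} suitable return of the hyperbolic ball to $\Delta$ (not the first hit of $\{0\}$), refine the hyperbolic ball into pieces according to the time and location of this return, handle the ``satellite'' pieces near the boundary, and then verify the no-overlap lemma for the resulting candidates. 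Without that inductive bookkeeping the last clause of the theorem does not follow, and neither, strictly, does the mod-$0$ partition property you invoke. So your sketch points in the right direction, but the step you describe as ``built into the construction'' is precisely the nontrivial part that the cited references spend most of their effort on.
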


It was proved in \cite{araujo2006a} that the one-dimensional
Lorenz transformation has \emph{exponentially slow
  recurrence} to the singular set, that is, for every
$\epsilon>0$ there exists $\delta>0$ such that
\begin{align}\label{eq:expslowrecurrence}
  \limsup_{n\to+\infty}\frac1n\log\lambda\left(\left\{x\in I:
    \frac1n \sum_{i=0}^{n-1}-\log|f^i(x))|_\delta>\epsilon\right\}\right)<0.
\end{align}
Following \cite{gouezel} this ensures the following result.

\begin{theorem}\label{thm:exp-tail} In the same setting of the
  previous Theorem~\ref{thm:markov}, the inducing time
  function \( R \) has exponential tail, that is, there
  exists positive constants $c$ and $\gamma$ such that
  $$
  \lambda(\{x\in I : R(x)>n\})<c e^{-\gamma n}.
  $$
\end{theorem}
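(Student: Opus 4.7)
The plan is to invoke the general tail estimate of Gou\"ezel~\cite{gouezel}, which asserts that for a one-dimensional non-uniformly expanding map with non-degenerate singular set, the inducing time produced by the hyperbolic-times construction has exponential tail provided the map exhibits exponentially slow recurrence to the singular set. Since~\eqref{eq:expslowrecurrence} is precisely the required hypothesis and was proved in~\cite{araujo2006a}, the remaining task is to check that the construction of Theorem~\ref{thm:markov} fits into Gou\"ezel's framework with uniform constants over $\cU$.

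The first step is to translate $\{R>n\}$ into the non-existence of hyperbolic times. By Theorem~\ref{thm:markov}, every atom $\omega\in\cQ$ comes with $R(\omega)=n(\omega)+k(\omega)$, where $1\le k(\omega)\le N$ and $n(\omega)$ is a $(\sigma,\delta_1)$-hyperbolic time for every $x\in\omega$, with $\omega\subset V_{n(\omega)}(x)$. Combining this with the $\underline\delta_1/3$-density of the $N$-pre-images of $0$ furnished by Remark~\ref{rmk:delta-dense} and the uniform lower bound $\underline\delta_1$ on hyperbolic-ball radii, one sees that whenever $x$ has a $(\sigma,\delta_1)$-hyperbolic time $m\le n-N$, then $R(x)\le n$; hence
\begin{equation*}
\{x\in\Delta:R(x)>n\}\subset\bigl\{x:\text{no}\ (\sigma,\delta_1)\text{-hyperbolic time lies in}\ [1,n-N]\bigr\}.
\end{equation*}

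The second step is the quantitative Pliss reduction of~\cite[Sec.~5]{ABV00}: if $x$ admits no $(\sigma,\delta_1)$-hyperbolic time in $[1,m]$, then either
\begin{equation*}
\tfrac{1}{m}\sum_{i=0}^{m-1}\log|Df(f^i x)|^{-1}>-\log\sigma+\epsilon_0
\end{equation*}
or the Birkhoff average $\tfrac{1}{m}\sum_{i=0}^{m-1}(-\log|f^i x|_\delta)$ exceeds some $\epsilon_1>0$. The second alternative has Lebesgue measure $\le c_1 e^{-\gamma_1 m}$ directly from~\eqref{eq:expslowrecurrence}; the first is a large-deviation event for the Birkhoff averages of $\log|Df|$ against the positive Lyapunov exponent $\lambda(\nu_0)=\int\log|Df|\,d\nu_0$, and for the piecewise $C^2$ expanding Lorenz map it decays exponentially by quasi-compactness of the transfer operator on a BV-type space, a property established in~\cite{KDO06}. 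Summing the two geometric series over $m\le n-N$ yields $\lambda(\{R>n\})\le c e^{-\gamma n}$.

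The main obstacle is ensuring that $c$ and $\gamma$ are uniform on the whole $C^2$-neighborhood $\cU$ of $X$. The exponents $\alpha(Y),\beta(Y)$ and the lower expansion bound $\inf|Df_Y|$ vary continuously on $\cU$, so the constants $\sigma,\delta_1,\theta,N,\underline\delta_1$ appearing in Lemmas~\ref{le:hyp-time1}--\ref{le:hyp-time2} and Theorem~\ref{thm:markov}, together with the Lasota--Yorke constants governing the spectral gap for $f_Y$, can all be chosen uniformly over $\cU$. Consequently both exponential rates above, and therefore the final tail $ce^{-\gamma n}$, are uniform in $Y\in\cU$; this uniformity will be essential later to conclude robustness of exponential decay of correlations from Theorem~\ref{thm:AvGoYoc}.
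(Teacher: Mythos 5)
Your top-level plan is exactly the paper's: cite the exponentially slow recurrence of the Lorenz map from \cite{araujo2006a} (equation~\eqref{eq:expslowrecurrence}) and feed it into the tail estimate for the hyperbolic-times inducing scheme of \cite{gouezel,alves-luzzatto-pinheiro2004}. The paper does nothing more than this, so the citation-level reasoning is sound.

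Your attempt to unpack Gou\"ezel's black box, however, contains two inaccuracies. First, the inclusion $\{R>n\}\subset\{x:\text{no }(\sigma,\delta_1)\text{-hyperbolic time in }[1,n-N]\}$ does not hold as stated. Theorem~\ref{thm:markov} says that \emph{each partition element that the algorithm produces} has $R(\omega)$ equal to a hyperbolic time plus at most $N$; it does \emph{not} say that the first hyperbolic time of a point immediately yields a return. The inducing algorithm carves pieces off iteratively, tracking a residual ``unresolved'' set, and a point can possess a hyperbolic time at step $m$ while the resulting pre-ball fails to resolve (because part of it was already cut away, or the image fails to land cleanly on $\Delta$ via the $N$-pre-orbit). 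The exponential decay of the residual set is the content of Gou\"ezel's argument, not a trivial consequence of one hyperbolic time appearing. Second, the split into two large-deviation alternatives is over-engineered here: since $|Df_Y|>\sqrt2$ holds uniformly on the whole domain (property~(f3)), the expansion condition $\prod_{j=n-k}^{n-1}|Df(f^j x)|^{-1}\le\sigma^k$ with $\sigma=1/\sqrt2$ is \emph{automatic} for every $x$ and every $n$. The only nontrivial part of the hyperbolic-time definition for a Lorenz map is the recurrence bound $|f^{n-k}(x)|_\delta\ge\sigma^{bk}$, which is precisely what \eqref{eq:expslowrecurrence} controls; no spectral-gap/BV argument for $\log|Df|$ is needed. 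Since these issues lie in the optional ``translation'' rather than in the citation you rely on, your conclusion stands, but the mechanism you describe is not the one that actually makes Gou\"ezel's theorem work.
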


This Markov map $F$ is obtained by inducing the interval map
$f$ on the interval $\Delta$, using an inducing time that is
given by the sum of a hyperbolic time with a non-negative
integer bounded by the number $N$ defined in
\eqref{rmk:delta-dense}, and has exponential tail with
respect to the Lebesgue measure.  Therefore, $R$ is Lebesgue
integrable and the following is well-known.

\begin{proposition}
\label{pr:existence-smooth-acim}
There exists an absolutely continuous invariant probability
measure $\nu$ for $F$ whose density $\phi=d\nu/d\lambda$ is
a $C^1$ strictly positive and bounded function on $\Delta$.
Moreover, $\nu_0= \frac{1}{\int R d\nu}
\cdot\sum_{k=0}^\infty (f^k)_*(\nu\mid R>k)$.
\end{proposition}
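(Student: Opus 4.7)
The plan is to construct $\nu$ as a fixed density of the transfer operator
\begin{align*}
\cL\phi(x)=\sum_{\om\in\cQ}\phi(h_\om(x))\,|Dh_\om(x)|,
\end{align*}
where $h_\om$ denotes the inverse branch of $F|_\om$, and then to identify the claimed formula for $\nu_0$ via Kac's formula. For the existence part, I would look for an $\cL$-invariant cone of smooth positive functions
\begin{align*}
\cC_a=\bigl\{\phi\in C^1(\De,(0,+\infty)):\ |\phi'|\le a\phi\bigr\}
\end{align*}
for a constant $a>0$ to be chosen, and then apply Schauder's fixed point theorem on the normalized slice $\cK_a:=\cC_a\cap\{\int_\De\phi\,d\la=1\}$.

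The core estimate is cone invariance. Letting $y\to x$ in the distortion inequality $|DF(x)/DF(y)-1|\le c|F(x)-F(y)|$ of Theorem~\ref{thm:markov} gives $|D\log|DF||\le c|DF|$, which via $Dh_\om(x)=1/DF(h_\om(x))$ translates into $|D\log|Dh_\om|(x)|\le c$ uniformly in $\om$. Together with the expansion bound $|Dh_\om|\le\kappa^{-1}$, termwise differentiation of $\cL\phi$ yields, for $\phi\in\cC_a$,
\begin{align*}
|(\cL\phi)'(x)|
\le \sum_\om \bigl[|\phi'(h_\om(x))|\,|Dh_\om(x)|^2 + \phi(h_\om(x))|D^2h_\om(x)|\bigr]
\le \Bigl(\frac{a}{\kappa}+c\Bigr)\cL\phi(x),
\end{align*}
so the choice $a:=c\kappa/(\kappa-1)$, finite since $\kappa>1$, gives $\cL(\cC_a)\subset\cC_a$. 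The interchange of sum and derivative is justified because each summand in the expression for $(\cL\phi)'$ is uniformly dominated by $(a/\kappa+c)\phi(h_\om)|Dh_\om|$, reducing convergence to that of the series defining $\cL\phi$ itself.

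On $\cK_a$ the log-Lipschitz bound $|(\log\phi)'|\le a$ on the bounded interval $\De$ forces uniform two-sided positive bounds and equicontinuity, so by Arzelà-Ascoli $\cK_a$ is a compact convex subset of $C^0(\De)$; since $\cL$ preserves and is continuous on $\cK_a$, Schauder's fixed point theorem furnishes $\phi$ with $\cL\phi=\phi$, and $\nu:=\phi\,d\la$ is an $F$-invariant probability with $C^1$ strictly positive bounded density. For the second statement set $\tilde\nu_0:=\sum_{k\ge0}(f^k)_*(\nu|_{\{R>k\}})$; its total mass equals $\sum_{k\ge0}\nu(\{R>k\})=\int R\,d\nu<\infty$ by the exponential tail of Theorem~\ref{thm:exp-tail}, and writing $\nu|_{\{R>k-1\}}=\nu|_{\{R>k\}}+\nu|_{\{R=k\}}$ the series telescopes into
\begin{align*}
\tilde\nu_0-f_*\tilde\nu_0=\nu-\sum_{k\ge1}(f^k)_*(\nu|_{\{R=k\}})=\nu-F_*\nu=0,
\end{align*}
since $f^k|_\om=F|_\om$ on each $\om\in\cQ$ with $R(\om)=k$. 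Dividing by $\int R\,d\nu$ produces an absolutely continuous $f$-invariant probability which, by the uniqueness of the $f$-acip from~\cite{KDO06}, must coincide with $\nu_0$. The main obstacle is the cone calculation of the second paragraph: standard Lasota-Yorke estimates for countable-branch expanding maps would already deliver $BV$ regularity of the density, but the $C^1$ strictly positive conclusion requires exploiting the full bounded distortion of $DF$ itself, not merely of $\log DF$, as granted by Theorem~\ref{thm:markov}.
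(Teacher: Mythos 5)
Your approach is genuinely different from the paper's. The paper verifies the Renyi condition in Subsection~\ref{sec:renyi-condit-c1} and then invokes the Baladi--Vall\'ee machinery of~\cite{BaVal2005} (recorded as Lemma~\ref{le:smooth-dens}) to obtain a $C^1$ invariant density, while you attempt a self-contained cone argument. Your cone estimate is correct, and in fact the inequality $|D\log DF|\le c\,|DF|$ you extract from Theorem~\ref{thm:markov} is precisely the Renyi bound $|D^2F/(DF)^2|\le c$ the paper computes independently, so the analytic input is the same. The Kac-formula telescoping $f_*\tilde\nu_0=\tilde\nu_0$ and the identification with $\nu_0$ via uniqueness of the $f$-acip from~\cite{KDO06} are also correct.

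The gap is in the Schauder step. From $|(\log\phi)'|\le a$ and $\int\phi\,d\la=1$ you correctly get uniform two-sided positive bounds on $\phi$ and equi-Lipschitz control, so Arzel\`a--Ascoli gives \emph{relative} compactness of $\cK_a$ in $C^0(\De)$, not compactness: the $C^0$-closure of $\cK_a$ consists of Lipschitz functions satisfying $\phi(y)\le e^{a|y-x|}\phi(x)$ that need not be $C^1$ (e.g.\ smoothings of $1+|x|$ converge to a corner while staying in $\cC_a$ for $a\ge1$). In the $C^1$ topology $\cK_a$ is closed but bounded sets are not precompact. So Schauder, applied on the $C^0$-closure, only yields a Lipschitz fixed density. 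To recover $C^1$ one must bootstrap: iterate $\phi=\cL^n\phi$, differentiate termwise almost everywhere, and note that the contribution of $\phi'\circ h$ carries a factor $|Dh|^2\le\kappa^{-n}|Dh|$ and so tends to zero uniformly, while the remaining series $\sum_{h\in\cH_n}\phi\circ h\cdot D|Dh|$ converges uniformly to a continuous function by the Renyi bound together with the uniform summability coming from Proposition~\ref{pr:bdd-dist-power}. That bootstrap, which is the actual substance of the $C^1$ claim, is absent from your proposal. Your closing remark that the $C^1$ conclusion ``requires exploiting the full bounded distortion of $DF$ itself, not merely of $\log DF$'' misidentifies the obstruction: the distortion estimate you already have is what is needed; what is missing is the regularity bootstrap past Lipschitz.
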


\subsubsection{Renyi condition and $C^1$ invariant density}
\label{sec:renyi-condit-c1}

  The previous induced map $F$ satisfies a stronger property
  than the expression above for bounded distortion, the
  \emph{Renyi condition} from \cite{Re57}. Indeed, by a
  simple computation
  $$
    \frac{|D^2F|}{|DF|^2}(x)  
    	\leq \sum_{j=0}^{R(x)-1} \frac{1}{|DF(x)|} 
        \frac{|D^2 f (f^j(x)|}{|Df (f^j(x)|}
  $$
  Therefore we can obtain the uniform upper bound
  \begin{align*}
    \frac{|D^2F|}{|DF|^2}(x)
    &\le
    \sum_{i=0}^{R-1}
    \frac1{|Df^{R-i}(f^i(x))|}\cdot
    \frac{|D^2f(f^i(x))|}{|Df(f^i(x))|^2}
    \lesssim
    \sum_{i=0}^{R-1} \sigma^{R-i}
    \frac{|f^i(x)|^{\alpha-2}}{(|f^i(x)|^{\alpha-1})^2}
    \\
    &=
    \sum_{i=0}^{R-1} \frac{\sigma^{R-i}}{|f^i(x)|^\alpha}
    \le
    \sum_{i=0}^{R-1} \frac{\sigma^{R-i}}{\sigma^{\alpha
        b(R-i)}}
    \le
    B=\frac{1-\alpha}{a\alpha}
    \sum_{i\ge0} \sigma^{(1-b\alpha)i}<\infty,
  \end{align*}
  for every $x\in \omega$ and $\omega\in\cQ$,
  where $R=R(\omega)$. This implies that, for $x,y\in\omega$
  \begin{align*}
    \left|\frac1{DF}(x)-\frac1{DF}(y)\right|
    \le
    |x-y|\frac{|D^2F|}{|DF|^2}(z)
    \le
    B|x-y|
  \end{align*}
  for some $z\in \omega$ given by the Mean Value
  Theorem. Moreover, given $n>1$
  \begin{align}
    \left|\frac{D^2F^n(x)}{(DF^n(x))^2} \right|
    &=
    \frac1{|DF^n(x)|^2}
    \left|\sum_{i=0}^{n-1} D^2F(F^i(x))
    \left(\prod_{j=0,\dots,n-1\atop j\neq i}DF(F^j(x))
    \right)\right| \nonumber
    \\
    &=
    \frac{1}{|DF^n(x)|}
    \sum_{i=0}^{n-1} \frac{|D^2F(F^i(x))|}{|DF(F^i(x))|}
    \le
    \frac{B}{|DF^n(x)|}
    \sum_{i=0}^{n-1} |DF(F^i(x))|  \nonumber
    \\
    &=
    B\sum_{i=0}^{n-1} \frac{|DF(F^i(x))|}{|DF^n(x)|}
    \le
    B \cdot n \cdot \sigma^{n-1}  \label{eq:conseq-renyi}
  \end{align}
  which is an infinitesimal when  $n\to+\infty$.

So, we get that there exists a $C^2$ uniformly expanding
Markov map $F$ satisfying the Renyi condition from the previous
remark.
In this setting the arguments of Baladi-Vall\'ee
from~\cite{BaVal2005} provide an invariant density for $F$
in the space of $C^1$ functions.

\begin{lemma}
  \label{le:smooth-dens}
  The density $\phi=d\nu/d\lambda$ of the $F$-invariant
  probability measure $\nu$ is a $C^1$ function
  $\phi:[-1/2,1/2]\to[0,+\infty)$.
\end{lemma}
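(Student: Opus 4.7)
The plan is to run a transfer-operator argument on the Banach space $C^1(\Delta)$, exploiting the Renyi-type bound $\bigl|D^2 F^n/(DF^n)^2\bigr|_0 \le B\, n\, \sigma^{n-1}$ established just above the statement, following the strategy of \cite{BaVal2005}. Introduce the Perron--Frobenius operator
$$(\cL\phi)(x) = \sum_{h\in\cH_1} \phi(h(x))\,|Dh(x)|,$$
with $\cH_1$ the countable family of inverse branches of $F$, one per atom $\omega\in\cQ$. The bounded distortion from Theorem~\ref{thm:markov} gives $\sup_x |Dh(x)|$ comparable to $\lambda(\omega)/\lambda(\Delta)$, so $\sum_h \sup|Dh| \le \const <\infty$; consequently $\cL$ is well defined and bounded on $C^0(\Delta)$, while a parallel computation combined with the Renyi bound controls it on $C^1(\Delta)$.

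The main step is a Lasota--Yorke inequality. Differentiating the $n$-th iterate yields
$$(\cL^n\phi)'(x) = \sum_{h\in\cH_n}\bigl[\phi'(h(x))\,(Dh(x))^2 + \phi(h(x))\,D^2h(x)\bigr],$$
and the identity $|D^2 h|/|Dh| = |D^2 F^n\circ h|/|DF^n\circ h|^2$ combined with the uniform expansion $|DF^n|\ge\kappa^n$ and the Renyi bound yields
$$|(\cL^n\phi)'(x)| \le \kappa^{-n}\,\cL^n|\phi'|(x) + B\, n\, \sigma^{n-1}\,\cL^n|\phi|(x).$$
Since iterated bounded distortion implies $\|\cL^n g\|_0 \le C\|g\|_0$ for non-negative $g$, this gives $\|\cL^n\phi\|_1 \le \theta\|\phi\|_1 + A\|\phi\|_0$ with $\theta<1$ for all sufficiently large $n$.

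From here I would invoke the Ionescu-Tulcea--Marinescu (Hennion) theorem to conclude that $\cL$ is quasi-compact on $C^1(\Delta)$: its essential spectral radius is strictly less than $1$ and its peripheral spectrum consists of finitely many eigenvalues of finite multiplicity. Since $F$ is a topologically mixing full-branch Markov map, the eigenvalue $1$ is simple, and its one-dimensional eigenspace must coincide with the line spanned by the density $\phi=d\nu/d\lambda$ of the absolutely continuous invariant probability $\nu$ from Proposition~\ref{pr:existence-smooth-acim}. Hence $\phi\in C^1(\Delta)$. Strict positivity of $\phi$ is standard: every inverse branch is defined on all of $\Delta$, so $\cL$ sends non-zero non-negative continuous functions to strictly positive ones, and the fixed point inherits this property.

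The main technical obstacle is handling the countably infinite partition $\cQ$: uniform convergence of the series defining $\cL^n\phi$ and its derivative, and a uniform bound on $\|\cL^n 1\|_0$, require combining the bounded distortion on atoms with the exponential tail of the inducing time $R$ from Theorem~\ref{thm:exp-tail}. Once this bookkeeping is in place, the Lasota--Yorke estimate and the quasi-compactness conclusion proceed as in the classical finite-branch case.
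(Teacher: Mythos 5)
Your argument is correct and is essentially the approach the paper takes: the paper establishes the Renyi bound \eqref{eq:conseq-renyi} and then cites the Baladi--Vall\'ee transfer-operator argument, which is precisely the Lasota--Yorke / quasi-compactness scheme you spell out, with the Renyi estimate controlling the $D^2h$ terms and Proposition~\ref{pr:bdd-dist-power} giving the uniform $L^\infty$ bound on $\cL^n$. One small point: the exponential tail of $R$ plays no role in this lemma --- the uniform bound $\sum_{h\in\cH_n}\sup_x|Dh(x)|\le B_0$ already follows from the iterated bounded distortion of Proposition~\ref{pr:bdd-dist-power} together with $\sum_{\omega\in\cQ}\lambda(\omega)=\lambda(\Delta)$.
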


\subsubsection{Uniform bounded distortion for powers of the induced map}
\label{sec:uniform-bounded-dist}

We need this technical result in the final arguments and we
are ready to prove it here.

\begin{proposition}
  \label{pr:bdd-dist-power}
  There exists $B_0>0$ such that for all $n\in\ZZ^+$,
  $h\in\cH_n$ and $x,y\in I$
  \begin{align*}
    \left|
      \frac{DF^n(h(x))}{DF^n(h(y))}
      \right|
      \le
      B_0 |x-y|.
  \end{align*}
\end{proposition}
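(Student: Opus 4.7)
The plan is to promote the pointwise Renyi-type bound $|D^2F^n(w)/(DF^n(w))^2| \le Bn\sigma^{n-1}$ of \eqref{eq:conseq-renyi} to a bounded distortion estimate valid for every inverse branch of $F^n$. The crucial observation is that the right-hand side is uniformly bounded in $n$, since $n\sigma^{n-1}\to 0$; so I set $M := \sup_{n\ge 1} Bn\sigma^{n-1} < \infty$. Once this uniform bound is in place, the proposition follows by a single integration along the segment joining $y$ to $x$, in exactly the spirit of the one-step bounded distortion derived in the passage preceding \eqref{eq:conseq-renyi}.

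Concretely, I fix $h \in \cH_n$; the full-branch Markov structure granted by Theorem~\ref{thm:markov} guarantees $h$ is defined on all of $\Delta$. Differentiating $F^n \circ h = \mathrm{id}$ gives $h'(x) = 1/DF^n(h(x))$, so
\begin{align*}
\frac{d}{dx}\log|DF^n(h(x))|
\;=\; \frac{D^2F^n(h(x))}{DF^n(h(x))}\, h'(x)
\;=\; \frac{D^2F^n(h(x))}{(DF^n(h(x)))^2}.
\end{align*}
Evaluating \eqref{eq:conseq-renyi} at $w=h(x)$ bounds the absolute value of this derivative by $M$, uniformly in $n$ and in the choice of $h$. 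Integrating along the segment $[y,x]\subset\Delta$ therefore yields
\begin{align*}
\left|\log\frac{DF^n(h(x))}{DF^n(h(y))}\right| \;\le\; M\,|x-y|,
\end{align*}
and the elementary inequality $|e^t-1|\le e^{|t|}|t|$ converts this into
\begin{align*}
\left|\frac{DF^n(h(x))}{DF^n(h(y))}-1\right| \;\le\; B_0\,|x-y|, \qquad B_0 := M\exp(M\,\mathrm{diam}(\Delta)),
\end{align*}
which is the bounded distortion intended by the proposition (the absent ``$-1$'' in the displayed inequality of the statement being a transparent typo, since otherwise the left-hand side would fail to vanish at $x=y$).

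The only non-routine point is the uniform-in-$n$ extraction of the constant $M$; this is immediate from the geometric factor $\sigma^{n-1}$ dominating the linear factor $n$ in \eqref{eq:conseq-renyi}, which itself rests on the backward contraction of Lemma~\ref{le:hyp-time1} applied inside the hyperbolic-time block of length $R(\omega)-k$ that underlies each branch of $F$ in Theorem~\ref{thm:markov}. Beyond securing this bound, the argument is a purely integrate-and-exponentiate manipulation, with no further obstacle.
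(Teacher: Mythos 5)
Your argument is correct, and it takes a genuinely different route from the paper's. The paper proves the proposition from scratch: it expands $\log|DF^n(h(x))/DF^n(h(y))|$ as a double sum $\sum_{i=0}^{n-1}\sum_{j=0}^{R_i-1}\log|Df(f^j(F^i(\bar x)))/Df(f^j(F^i(\bar y)))|$, controls each inner term via the hyperbolic-time backward contraction and the slow-approach estimate $|f^j(\cdot)|\ge\sigma^{b(n_i-j)}$, sums the inner sum geometrically to get $\frac{\sigma^b}{1-\sigma^b}|F^{i+1}(\bar x)-F^{i+1}(\bar y)|$, and then sums the outer sum geometrically using the expansion $\kappa>1$ of $F$. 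You instead reuse the already-established $n$-step Renyi bound \eqref{eq:conseq-renyi}, $|D^2F^n/(DF^n)^2|\le Bn\sigma^{n-1}$, notice that $M:=\sup_n Bn\sigma^{n-1}<\infty$ because $\sigma<1$, identify $D\big(\log|DF^n\circ h|\big)=\big(D^2F^n/(DF^n)^2\big)\circ h$ via $h'=1/(DF^n\circ h)$, and integrate. This is cleaner and shorter, and rightly locates the only substantive input as the uniform-in-$n$ boundedness of the Renyi quotient; the only trade-off is that it delegates the hard work (the block decomposition via hyperbolic times) to the derivation of \eqref{eq:conseq-renyi} rather than re-deriving it in situ. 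Your reading of the displayed inequality in the statement as a typo for $\big|DF^n(h(x))/DF^n(h(y))-1\big|\le B_0|x-y|$ (equivalently the log version, which is what the paper's own proof actually establishes) is the correct one; the two formulations are interchangeable up to rescaling $B_0$ using $\mathrm{diam}(\Delta)\le 1$.
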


\begin{proof}
  We use the properties of hyperbolic times and the last
  part of the statement of Theorem~\ref{thm:markov} to
  explicitly estimate this bound. First, we fix $n\in\ZZ^+$
  and set $R_i:=R(\omega_i)$ where $\omega_i=\cQ(F^i(h(y)))$
  is the element of the partition $\cQ$ containing both
  $F^i(h(y)), F^i(h(x))$, $i=0,\dots,n-1$. Secondly, we set
  $n_i:=R_i-k_i$ to be the hyperbolic time of $F^i(h(x))$
  given by Theorem~\ref{thm:markov}. Then we write $\bar
  x:=h(x), \bar y=h(y)$ and
  \begin{align*}
    \log\left|\frac{DF^n(\bar x)}{DF^n(\bar y)} \right|
    &=
    \sum_{i=0}^{n-1}\sum_{j=0}^{R_i-1}
    \log\left|\frac{Df(f^j(F^i(\bar x)))}{Df(f^j(F^i(\bar y)))}\right|
  \end{align*}
  where on hyperbolic times the summand can be bounded as follows
  \begin{align*}
    \log\left|\frac{Df(f^j(F^i(\bar x)))}{Df(f^j(F^i(\bar y)))}\right|
    &\approx
    \log\left|\frac{f^j(F^i(\bar x))}{f^j(F^i(\bar y))}\right|
    \le
    \frac{|f^j(F^i(\bar x))-
      f^j(F^i(\bar y))|}{|f^j(F^i(\bar y))|}
    \\
    &\le
    \frac{\sigma^{n_i-j}
      |f^{n_i}(F^i(\bar x))-f^{n_i}(F^i(\bar
      y))|}{\sigma^{b(n_i-j)}}
    \le
    \sigma^{b (R_i-j)} |F^{i+1}(\bar x)-F^{i+1}(\bar y)|
  \end{align*}
  since $f$ expands distances by at least $\sigma^{-1}$ on
  both branches. Putting this inequality in the summation
  above we get
  \begin{align*}
    \log\left|\frac{DF^n(\bar x)}{DF^n(\bar y)} \right|
    \le
    \frac{\sigma^b}{1-\sigma^b}\sum_{i=0}^{n-1}
    |F^{i+1}(\bar x)-F^{i+1}(\bar y)|.
  \end{align*}
  But $F$ also expands distances inside each partition
  element, thus
  \begin{align*}
    \log\left|\frac{DF^n(\bar x)}{DF^n(\bar y)} \right|
    \le
    \frac{\sigma^b}{1-\sigma^b}
    |x-y|\sum_{i=0}^{n-1}\kappa^{-(n-i)}
    \le
    \frac{\sigma^b}{1-\sigma^b}
    \frac{\kappa^{-1}}{1-\kappa^{-1}} |x-y|.
  \end{align*}
  To complete the proof it is enough to define
  $B_0:=\sigma^b(1-\sigma^b)^{-1} (\kappa-1)^{-1}$.
\end{proof}

\subsection{The good roof function}
\label{sec:good-roof-functi}

Here we show that the associated flow return time function
$r:\cup_{\omega\in\cQ}\omega\to(r_0,+\infty)$, where $r_0>0$
depends only on $f$, induced from $\tau$ and associated to
the induced map $F$, is a good roof function. 
Note that the function $r$ is defined from the Poincar\'e return time
function for every $x\in\bigcup_{\omega\in\cQ}\omega$ as
\begin{align*}
  r(x)=S_R\varrho(x):=\sum_{j=0}^{R(x)-1}
  \varrho(f^j(x)),
\end{align*}
where
$
  \varrho(x) := \inf\big\{ \tau(z) :
  z\in\pi^{-1}(\{x\})\big\}=\tau(x,0,1), \text{for } x\in
  I\setminus\{0\},
$
since $\tau$ does not depend of the point we choose on some
strong-stable leaf in $S^*$.
Clearly $r$ is still bounded from below by the same value
$\tau_0$ that bounds $\tau$ on each linearized flow box near
the origin; see Section~\ref{sec:near-singul}. This is
property (1) of a good roof function.

Furthermore, as a consequence of our construction in 
Section~\ref{sec:geometr-lorenz-flow}
and expressed in \eqref{eq:tau-pert}, there exists a
function $s_Y$, constant on strong-stable leafs, which is
$C^2$ close to a constant function on $S^*_Y$ such that for every $Y$ 
$C^2$-close to 
\begin{align}\label{eq:varrho-x}
  \varrho_Y(x)= -\frac1{\lambda_1(Y)} \log|x|+s_Y(x).
\end{align}
As a consequence we obtain that
\begin{align*}
  |D\varrho_Y(x)+ (\lambda_1(Y) x)^{-1}| = |D s_Y(x)|, x\in
  I\setminus\{0\}
\end{align*}
is uniformly $C^1$ close to the zero function. In
particular, we can find $\xi>0$ so that
$|Ds(x)|\le\xi|x|^{-1}$ and hence there are $\xi_1,\xi_2>0$
such that
\begin{align}\label{eq:Dvarrho-x}
  \xi_1\le - x\cdot D\varrho(x)\le \xi_2,
\end{align}
Notice also that one can take $|\xi_i-1/\lambda_1(X)|$ as small as
needed, for $i=1,2$, by taking $Y$ sufficiently $C^2$ close
to $X$.

\subsubsection{Exponential tail}
\label{sec:exponent-tail}

We split the estimates in three cases depending on how large the tail constant is. 
Consider the positive real 
$\xi=(2\nu(\rho))^{-1}=(2\int \rho \,d\nu)^{-1}$ and take a positive integer $L$.

\begin{description}
\item[Case $R$ big enough] If $R>\xi L$, then
  \begin{align}\label{eq:tail-1}
    \Leb\{r>L\,\&\, R>\xi L\}
    \le
    \Leb\{R>\xi L\}
    =
    \Leb\Big(\bigcup_{\omega\in\cP,\, R(\omega)>\xi L}
    \omega\Big)
    \le
    c e^{-\gamma \xi L}
  \end{align}
  since $R$ has exponential tail.
\item[Case $R$ not so big] If $R\le\xi L$, then $S_R\varrho
  > L$ implies that $S_R\varrho - \nu(\varrho) >
  L-\nu(\varrho)$ and
  \begin{align*}
    \frac1R\sum_{i=0}^{R-1}(\varrho \circ f^i -
    \nu(\varrho )) > \frac{L-\nu(\varrho)}{R}
    >
    \frac{L}R -\nu(\varrho )
    >
    \frac1\xi -\nu(\varrho)
    =
    \nu(\varrho)
    >0
  \end{align*}
  and also $\nu(\varrho\circ f^i -\nu(\varrho))
  =\int\Big( \varrho \circ f^i -\int \varrho \,
  d\nu\Big)\,d\nu=0$.
\end{description}
At this point we recall a large deviations result for
non-uniformly expanding maps (see e.g. 
\cite{araujo-pacifico2006,araujo2006a,Va09})
which guarantees that 
\begin{align*}
  \limsup_{n\to+\infty}\frac1n\log\Leb
  \left\{x: \frac1n \sum_{i=0}^{n-1} \varrho \circ f^i(x) > 2\nu(\varrho) >0 \right\} < 0
\end{align*}
so that the measure $\Leb\{x:\frac1R\sum_{i=0}^{R-1}\varrho\circ
f^i(x) >2\nu(\varrho) \}$ is exponentially small in $R$ (thus in $\xi L$
also), but only for $R\ge R_0$, for some integer $R_0$.  We
remark that $R_0$ does not depend on the value of $L$.  Thus
we have achieved an exponential tail for
\begin{align}\label{eq:tail-2}
  \Leb\{ x\in\Delta : R_0\le R(x) \le \xi L \,\&\, r>L\}.
\end{align}
\begin{description}
\item[Case $R$ small ($R<R_0$)] It is enough to consider the
  case $L\gg R_0$ since we are only interested on the
  measure of the tail set of $r$. Hence $r>L\iff
  \sum_{i=0}^{R-1}\varrho\circ f^i>L$ implies
  $\varrho\circ f^i > L/R_0$ for some $i\in\{0,\dots,
  R_0\}.$ Hence
  \begin{align*}
    \Leb\{r>L \,\&\, R\le R_0\} \le 
    \Leb\{ \varrho \circ f^i>L/R_0\}
    = (f_*^i\Leb)\big\{\varrho >L/R_0\}.
  \end{align*}
  Since $|Df|>\sqrt2$, $f$ has two $C^2$ monotonous branches
  and $0\le i\le R_0$, we have that the density of
  $f^i_*\Leb$ is smaller than $2^{i/2}$: for each branch $f\mid
  I^\pm$ we have $(f\mid I^\pm)_*\Leb$ with density smaller
  than $2^{-1/2}$; $f$ is not Markov and $f^i$ has $2^i$
  branches whose images might intersect, so that the maximum
  density would be, in the worst case where the image of
  every branch intersects at some region, smaller than
  $2^i\cdot 2^{-i/2}=2^{i/2}$. We conclude
  \begin{align*}
    \Leb\{r>L \,\&\, R\le R_0\}
    &\le 
    2^{i/2}\Leb\{\varrho>L/R_0)
    \\
    &\le
    2^{R_0}\Leb\{\varrho>L/R_0\}
    \le
    2^{R_0}\cdot e^{-\lambda_1 L/R_0}.
  \end{align*}
  At this point we use \eqref{eq:varrho-x} and conclude that
  the measure $\Leb\{r>L \,\&\, R\le R_0\}$ decays
  exponentially fast with $L$.
\end{description}
This estimate together with \eqref{eq:tail-1} and
\eqref{eq:tail-2} shows that $r$ has exponential tail.

\subsubsection{The uniform bound of the derivative}
For property (2) of a good roof function, let $h\in\cH_1$,
$h:\Delta\to\omega$ be an inverse branch of $F=f^R$ with
inducing time $l=R(\omega)\ge1$ and let us fix
$x\in\omega$. Then
\begin{align*}
  |D(r\circ h)(x)|
  &=
  |Dr ( h(x) )|\cdot |Dh(x)|
  =
  \frac{|Dr ( h(x) )|}{|DF ( h(x) ) |}
  =
  \left|
    \sum_{i=0}^{l-1} \frac{(D\varrho\circ f^i)\cdot Df^i }{DF}\circ h(x)
    \right|.
\end{align*}

In addition, from the construction of the inducing partition
using hyperbolic times, we have $l=(l-n) + n$, where $l-n$
is a $(\sigma,\delta_1)$-hyperbolic time for $x_0$ and
$0<n\le N$.
Thus $|x_i|\ge \sigma^{b(l-i)}$ for $x_i=f^i(h(x))$ by
definition of hyperbolic times, and so, by
\eqref{eq:Dvarrho-x} we get $|D\varrho(x_i)|\le \xi_2
\sigma^{-b(l-i)}$, $i=0,\dots,l-n$, where
$\sigma=1/\sqrt2$. Moreover
\begin{align*}
  \left|\frac{Df^i}{DF}\right|\circ h(x) 
  &=
  \frac1{|Df^{l-i}\circ f^i|}\circ h(x) 
  \le
  \sigma^{(l-i)},\quad i=0,\dots,l-1;
\end{align*}
and $|x_i|\ge\delta$ and $Df(x_i)>\sigma^{-1}$ for
$l-n<i<R(\omega)$.  Altogether this implies, because
$0<b<1/2$, that $ |D(r\circ h)(x)| \le
(\xi_2/\delta)\sum_{i=0}^{l-1} \sigma^{(1-b)i}$ which is
bounded by a constant. Thus we have proved that
$\sup_{h\in\cH_1}\| D(r\circ h)\|_0$ is finite.

\subsubsection{Uniform non-integrability}
\label{sec:uniform-non-integr}

We prove that $r$ satisfies the aperiodicity condition, the
third item in Definition~\ref{def:goodroofunc}, for a $C^k$
open set of vector fields, $k\ge2$, throught a perturbative
argument.

If there exists a $C^1$ function $u:\Delta\to\RR$ and a
measurable function $v:\Delta\to\RR$ constant on each
element $\omega$ of $\cQ$ satisfying $ r =u \circ F -u+v$,
then we show that, up to a $C^1$ perturbation of the original 
vector field $X$, 
this is impossible for all $C^2$ nearby vector fields. For
this we choose two distinct periodic points $x_{1}$, $x_{2}$
for $F:\De \circlearrowleft$ of the same period $n$ whose
orbits are (i) distinct, and (ii) each orbit visits each of
the elements of the Markov partition the same number of
times as the other, but (iii) necessarily in some different
order to each other. The existence of such a pair of
periodic orbits is a consequence of $F$ being a full branch
Markov map: if $\omega_1,\omega_2$ are two elements of the
Markov partition, we can choose the period $p=4$ and $x_i,
i=1,2$ such that
\begin{align*}
  x_1, F(x_1) \in \omega_1, F^2(x_1), F^3(x_1) \in
  \omega_2 \quad\text{and}\quad 
x_2, F^2(x_2) \in \omega_1, F(x_2), F^3(x_2) \in
  \omega_2.
\end{align*}
Furthermore, $x_1, x_2$ can be chosen in the interior
of $\omega_1$.
The cohomological equation implies
$S_{p}r(x_1)=S_pv(x_1)=S_pv(x_2)=S_pr(x_2)$ since $x_1$ and
$x_2$ visit the same Markov partition elements an equal
number of times and $v$ is constant on each partition
element. Hence it is enough to modify the roof function
$\rho:S\to\RR^+$ in a small neighbourhood of $x_1$ that does
not intersect the orbit of $x_2$ to ensure that the induced
roof function $r$ satisfies $S_{p}r(x_1)>S_{p}r(x_2)$ and so
is not cohomologous to a piecewise constant roof
function. This modification can be done by changing the size
of the vector field $X$ in a small neighborhood around
$x_1$. Note that $S_pr(x_i)$ is the period of $x_i$ as a
periodic orbit of the vector field $X$. 
By some abuse of notation we shall denote by $X$ the perturbed vector field.

We need to show that this conclusion holds for all vector
fields $Y$ that are $C^k$ close to $X$, $k\ge2$.
The orbits of these points involve only finitely many
iterates (the inducing time $R$) of the Lorenz
transformations $f$: both orbits are contained in $\{R\le
N\}$ for some fixed $N\ge1$. Moreover, these points belong
to two distinct hyperbolic periodic orbits (of saddle type)
of the geometric Lorenz attractor and are away from a
neighborhood of the singularity at the origin. Hence, these
orbits admit a smooth continuation to all $C^k$ nearby
vector fields $Y$ which admit a similar construction of
smooth cross-section $S_Y$ and induced
tranformation $F_Y$, following the inductive procedure
detailed in \cite{alves-luzzatto-pinheiro2004}.

In particular, since the periodic points $x_1, x_2$ belong to the interior
of the partition elements we can control finitely many iterates of the
Lorenz transformation $f_Y$ and obtain that the
corresponding partition of $\{R_Y\le N\}$ associated to the
induced transformation $\tilde F$ up to inducing time $N$ is
close enough to the partition of $\{R\le N\}$ so that the
continuation $\tilde x_i$ of the orbits of $x_i, i=1,2$ have
the same combinatorics as before, visiting the same elements
of the Markov partition the same number of times as the
other but in a different order, and with the same inducing
times.  The rest of the inducing map for $f_Y$ is obtained
following an inductive construction and we do not use in
this argument the elements of the partition whose induction
time is higher than $n$.

We can then use the same expression as before obtaining $S_p\tilde
r(\tilde x_1)>S_p\tilde r(\tilde x_2)$ for $Y$ sufficiently
$C^k$ close to $X$, showing that the
induced roof function $\tilde r$ for the vector field $Y$
cannot be cohomologous to a piecewise constant roof
function. Hence this function satisfies the uniform
non-integrability condition needed to obtain exponential
decay of correlations for the flow of $Y$ on the geometric
Lorenz attractor.


\subsection{The hyperbolic skew-product structure}
\label{sec:skew-product-struct}

Now we explain how the existence of the previously
constructed induced map $F$, together with the existence of
the contracting foliation on the cross-section $S$ of the
geometric Lorenz flow, ensures the existence of the good
hyperbolic skew-product structure for the flow.

We start with a useful consequence of Lemma~\ref{le:leo} and
Remark~\ref{rmk:densepreorbit}: the images of $\Delta$ cover
$I$ with the exception of a set of points of zero Lebesgue
measure, i.e.
\begin{align}\label{eq:Deltacobre}
  \bigcup_{\omega\in\cQ}\bigcup_{j=0}^{R(\omega)-1}
  f^j(\omega)=I,\quad\lambda\bmod0.
\end{align}
In fact, we have that $\Delta_\cQ:=\cup_{\omega\in\cQ}\omega =
\Delta\setminus N$ with $\lambda(N)=0$ by construction, and
every point of the domain of $f$ has some pre-image in
$\Delta$. Since $f^{R(\omega)}(\omega)=\Delta$ for each
$\omega\in\cQ$ we have that
\begin{align*}
  (-1/2,1/2)\subset \bigcup_{j\ge0} f^j(\Delta) =
  \bigcup_{j\ge0} f^j(N) \cup
  \bigcup_{\omega\in\cQ}\bigcup_{j=0}^{R(\omega)-1}
  f^j(\omega)
\end{align*}
and $\lambda(f^j(N))=0$ for all $j\ge1$ because $f$ is
piecewise $C^2$, which proves~\eqref{eq:Deltacobre}.

\subsubsection{The induced Poincar\'e return map}
\label{sec:induced-poincare-ret}

We define the following induced map $\widehat
F:=P^{R\circ\pi}:\pi^{-1}(\Delta_\cQ)\to\pi^{-1}(\Delta)$,
where $\pi:S\to I$ is the projection onto the quotient
$I=S/\cF$ of $S$ over the stable leaves.
We set for future use $\widehat\Delta=\pi^{-1}(\Delta)$ and 
$\widehat\Delta_\cQ:=\pi^{-1}(\Delta_\cQ)=\cup_{\omega\in\cQ}\pi^{-1}(\omega)$.

We note that on each element $\pi^{-1}(\omega)$ of the
$\Leb\bmod0$ partition $\hat \Delta_{\mathcal Q}$
we have an inducing time given by $R(\omega)$ (we write
$\Leb$ for a normalized area measure on $S$). Moreover by construction
$\pi\circ\widehat F=F\circ\pi$. The uniform
contraction along the leaves of $\cF$ ensures that $\widehat F$
contracts distances between points in the same leaf. 
This shows that properties (1) and (4) in 
Definition~\ref{def:hyp-skew-product} hold for $\widehat F$ on $\widehat \Delta$.
Properties (2) and (3) will be proven in Subsections~\ref{sec:hat-f-invari} and 
\ref{sec:disint-property}, respectively.

\subsubsection{The existence of smooth conjugation}
\label{sec:existence-smooth-sem}

Here we explain how to obtain the smooth semi-conjugacy between the original
geometric Lorenz flow model and the hyperbolic skew-product
model.

We can define the semiflow $\widehat F_t$ over
$\widehat\Delta$ with base map $\widehat F$ and height
function $r\circ\pi$ as usual, whose phase space is
$\widehat\Delta_r$ defined in
Section~\ref{sec:good-hyperb-semifl}.
The suspension semiflow $Z_t$ with base map $P:S^*\to S$
and phase space
\begin{align*}
  \Delta_\tau:= \{ (w,t) : w\in S^*, 0\le t
  <\tau(w)\}
\end{align*}
can easily seen to be conjugated to the geometric Lorenz
flow $Y^t$ in a neighborhood of $\Lambda$ through the smooth
transformation $\Phi:\Delta_\tau\to U$ given by
$\Phi(w,t)=Y^t(w)$, where $U$ is a neighborhood of the
attractor $\Lambda$ defined in \eqref{eq:trap-reg-U}. This
is a diffeomorphism on an open subset of $U$ with full
volume on $M$.

We need to conjugate $Z_t$ with $\widehat F_t$. Since the
first return time function $\tau$ is constant on
strong-stable leaves, the main diferences between $Z_t$ and
$\widehat F_t$ are the base maps and the roof functions.
But $F:\Delta_\cQ\to\Delta$ and $r$ are induced from $f$ and
$\varrho$ with the same number of interates on each
$\omega\in\cQ$. In fact, since the roof function $r$
associated to $\widehat F_t$ is an ergodic sum of the roof
function $\varrho$ associated to $Z_t$ with a locally
constant number of summands, which precisely equals
$R(\omega)$ on each $\omega\in\cQ$, it follows, by the
definition of the equivalence relation $\sim$ defining
$\Delta_\tau$:
\begin{enumerate}
\item[(i)] $\widehat\Delta_r$ can be naturally identified
  with an open subset of $\Delta_\tau$;
\item [(ii)]
  from~\eqref{eq:Deltacobre}, $\widehat\Delta_r$ has in fact
  full bidimensional Lebesgue measure in
  $\Delta_\tau$; and
\item [(iii)] $\widehat
  F_t(w,s)=Z_t(w,s)$ for all
  $(w,s)\in\widehat\Delta_r$ and $t\ge0$.
\end{enumerate}

This shows that we can smoothly conjugate $\widehat F_t$
with $Z_t$ over an open subset with full Lebesgue measure;
then smoothly conjugate $Z_t$ with the original geometric
Lorenz flow $Y^t$ on an open subset with full volume in a
neighborhood of the attractor $\Lambda$.

\subsubsection{The $\widehat F$-invariant probability}
\label{sec:hat-f-invari}

In this subsection we use that every invariant measure
associated to a quotient map over a stable foliation lifts
in a unique way to an invariant measure for the original
dynamics to prove item (2) of
Definition~\ref{def:hyp-skew-product}.

Let $(S,d)$ be a compact metric space, $\Gamma\subset S$ and
let $P: S\setminus\Gamma \to S$ be a measurable map.  We
assume that there exists a partition $\cF$ of $S$ into
measurable subsets, having $\Gamma$ as at most countable
collection of elements of $\cF$, which is
\begin{itemize}
\item{{\em invariant:\/}} the image by $P$ of any $\xi\in\cF$ distinct
     from $\Gamma$ is contained in some element $\eta$ of $\cF$;
\item{{\em contracting:\/}} the diameter of $P^n(\xi)$ goes to
     zero when $n\to\infty$, uniformly over all the $\xi\in\cF$
     for which $P^n(\xi)$ is defined.
\end{itemize}
Set $\pi:S\to \cF$ to be the canonical projection.  Hence,
$A\subset \cF$ is Borel measurable if and only if
$\pi^{-1}(A)$ is a Borel measurable subset of $S$, since $A$
is open if, and only if, $\pi^{-1}(A)$ is open in $S$.  The
invariance condition ensures that there is a uniquely
defined map
$$
f:(\cF\setminus\{\Gamma\}) \to \cF
\quad\text{such that}\quad
f\circ \pi = \pi \circ P,
$$
which is measurable.  We assume that the leaves are
sufficiently regular so that $\Xi/\cF$ is a metric space
with the topology induced by $\pi$.

Let $\mu_f$ be any probability measure on $\cF$ invariant
under the transformation $f$.

For any bounded function $\psi:S\to\RR$, let
$\psi_{-}:\cF\to\RR$ and $\psi_{+}: \cF\to\RR$ be
defined by
$$
\psi_{-}(\xi)=\inf_{x\in\xi}\psi(x)
\qquad\mbox{and}\qquad
\psi_{+}(\xi)=\sup_{x\in\xi}\psi(x).
$$

\begin{proposition}
  \label{pr:liftmesureP}
  There exists a unique measure $\mu_P$ on $S$
such that
$$
\int \psi\,d\mu_P
=\lim_{n\to\infty} \int (\psi\circ P^n)_{-}\,d\mu_f
=\lim_{n\to\infty} \int (\psi\circ P^n)_{+}\,d\mu_f
$$
for every continuous function $\psi:S\to\RR$.  Besides,
$\mu_P$ is invariant under $P$. Moreover the correspondence
$\mu_f\mapsto\mu_P$ is injective, 
$\pi_*\mu_P=\mu_f$ and $\mu_P$ is
ergodic if $\mu_f$ is ergodic.
\end{proposition}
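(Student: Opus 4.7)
The plan is to build $\mu_P$ via a positive linear functional on $C(S)$ forced by the two candidate limits in the statement, and then to read off the required properties directly from the construction. First I would show that, for each $\psi\in C(S)$, both limits exist and coincide. Set $L_n^+(\psi):=\int(\psi\circ P^n)_+\,d\mu_f$ and $L_n^-(\psi)$ analogously. The invariance property $P(\xi)\subset f(\xi)$ gives
\begin{align*}
(\psi\circ P^{n+1})_+(\xi)=\sup_{y\in P(\xi)}\psi(P^n(y))\le\sup_{y\in f(\xi)}\psi(P^n(y))=(\psi\circ P^n)_+(f(\xi)),
\end{align*}
and integrating against the $f$-invariant measure $\mu_f$ shows that $L_n^+(\psi)$ is monotone decreasing; symmetrically $L_n^-(\psi)$ is monotone increasing. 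The contracting property combined with the uniform continuity of $\psi$ on the compact space $S$ forces $(\psi\circ P^n)_+-(\psi\circ P^n)_-$ to converge to $0$ in sup norm on $\cF$, so the two monotone sequences share a common limit $\Lambda(\psi)$.

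The functional $\Lambda:C(S)\to\RR$ is linear, positive, and satisfies $\Lambda(1)=1$, so by the Riesz representation theorem there is a unique Borel probability measure $\mu_P$ on $S$ with $\int\psi\,d\mu_P=\Lambda(\psi)$ for all $\psi\in C(S)$; this produces both the existence and the uniqueness asserted in the proposition. Invariance of $\mu_P$ under $P$ follows from the index shift $\Lambda(\psi\circ P)=\lim_n L_{n+1}^+(\psi)=\Lambda(\psi)$. For the projection formula $\pi_*\mu_P=\mu_f$ I would use that, whenever $\phi\in C(\cF)$, the function $\phi\circ\pi$ is constant on every leaf, so $(\phi\circ\pi\circ P^n)_\pm(\xi)=\phi(f^n(\xi))$ and consequently $\Lambda(\phi\circ\pi)=\int\phi\circ f^n\,d\mu_f=\int\phi\,d\mu_f$ by $f$-invariance of $\mu_f$. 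Injectivity of the correspondence $\mu_f\mapsto\mu_P$ is then immediate from $\pi_*\mu_P=\mu_f$.

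The ergodicity statement is the step I expect to demand the most care. The approach is to show that every $P$-invariant measurable set $A$ is $\pi$-saturated modulo $\mu_P$, i.e.\ $A=\pi^{-1}(B)$ mod $\mu_P$ for some measurable $B\subset\cF$; the relation $\pi\circ P=f\circ\pi$ then promotes $P$-invariance of $A$ to $f$-invariance of $B$ modulo $\mu_f$, and ergodicity of $\mu_f$ forces $\mu_f(B)\in\{0,1\}$, hence $\mu_P(A)\in\{0,1\}$. To justify saturation I would approximate $\mathbf{1}_A$ in $L^2(\mu_P)$ by some $\psi\in C(S)$; since $P$ preserves the $L^2(\mu_P)$-norm and $\mathbf{1}_A\circ P^n=\mathbf{1}_A$, the function $\psi\circ P^n$ stays within the same $L^2$-distance of $\mathbf{1}_A$ for every $n$, while the contracting property together with uniform continuity makes $\psi\circ P^n$ uniformly close to a function that factors through $\pi$ (for instance the midpoint of $(\psi\circ P^n)_\pm$ composed with $\pi$). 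Letting the $L^2$-approximation improve and $n\to\infty$ places $\mathbf{1}_A$ in the $L^2$-closure of the $\pi^{-1}(\mathcal{B}_\cF)$-measurable functions, which is exactly the saturation. The main obstacle is this last technical step: matching the uniform, pointwise contraction information against the $L^2$ approximation of $\mathbf{1}_A$ so that the conclusion really falls out.
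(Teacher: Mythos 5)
Your construction of $\mu_P$ via the monotone sandwich $L_n^-(\psi)\nearrow\Lambda(\psi)\swarrow L_n^+(\psi)$ followed by Riesz representation, and the proofs of invariance, projection $\pi_*\mu_P=\mu_f$, and injectivity, are correct and follow the same scheme the paper points to (the ``standard arguments'' of~\cite{AraPac2010}, Section~7.3.5). Two small points deserve explicit mention. Linearity of $\Lambda$ is not immediate, since $(\psi_1+\psi_2)_+\neq(\psi_1)_++(\psi_2)_+$ in general; one needs the two-sided inequality
\begin{equation*}
L_n^-(\psi_1)+L_n^-(\psi_2)\le L_n^{\mp}(\psi_1+\psi_2)\le L_n^+(\psi_1)+L_n^+(\psi_2),
\end{equation*}
together with the coincidence of the two one-sided limits, to conclude additivity. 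Second, $\psi\circ P$ is not continuous on $S$ (indeed $P$ is undefined on $\Gamma$), so the index-shift ``$\Lambda(\psi\circ P)=\Lambda(\psi)$'' is an abuse; you should first deduce $\mu_P(\Gamma)=0$ from $\pi_*\mu_P=\mu_f$ and $\mu_f(\{\Gamma\})=0$ (the latter because an atom of $\mu_f$ at $\Gamma$ would contradict $f$-invariance since $f$ is undefined there, and $\Gamma$ is countable), and only then interpret $\int\psi\circ P\,d\mu_P=\lim_n L^{\pm}_{n+1}(\psi)$ via the pointwise sandwich $(\psi\circ P^{n+1})_-\circ\pi\le\psi\circ P^{n+1}\le(\psi\circ P^{n+1})_+\circ\pi$ off the $\mu_P$-null set $\bigcup_j P^{-j}\Gamma$. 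In particular, the projection identity should logically precede the invariance claim in your ordering.

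On ergodicity, your $L^2$-saturation argument is sound and does close the gap you flag: taking $g_n:=\tfrac12\big((\psi\circ P^n)_++(\psi\circ P^n)_-\big)$ gives $\|\psi\circ P^n-g_n\circ\pi\|_\infty\to0$, and combining this with $\|\mathbf 1_A-\psi\circ P^n\|_{L^2(\mu_P)}=\|\mathbf 1_A-\psi\|_{L^2(\mu_P)}$ places $\mathbf 1_A$ in the $L^2$-closure of $\pi^{-1}(\mathcal B_{\cF})$-measurable functions, so $A$ is saturated and $f$-ergodicity passes up. This differs from the route usually taken in the cited reference, which invokes Birkhoff's theorem directly: for $\psi\in C(S)$ the time average $\psi^*(x)=\lim\frac1n\sum_{j<n}\psi(P^jx)$ is, by the contraction of $\cF$, constant on stable leaves, hence factors as $\phi\circ\pi$ with $\phi$ $f$-invariant $\mu_f$-a.e.\ and therefore $\mu_f$-a.e.\ constant, whence $\psi^*\equiv\int\psi\,d\mu_P$. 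The Birkhoff route is a bit shorter; your version isolates the stronger intermediate fact that the $(P,\mu_P)$-invariant $\sigma$-algebra lies in $\pi^{-1}(\mathcal B_{\cF})$ mod $\mu_P$, which is reusable. Either is acceptable.
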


This follows from standard arguments which can be found in,
e.g.  Section 7.3.5 of \cite{AraPac2010}.  Hence we just
have to take $\mu_f=\nu_0$ to obtain the corresponding
$\eta_0=\mu_P$ ergodic $P$-invariant probability measure
which lifts $\nu_0$, where $P$ and $f$ are the Poincar\'e
return map to the cross-section $S$ of the geometric Lorenz
flow, $f$ the Lorenz transformation associated to $P$;
and $\cF$ is the family of stable leaves on $S$ for $P$.

Analogously, we consider the measurable map $\widehat
F:\widehat\Delta_\cQ\to\widehat\Delta$ on the
space $\widehat\Delta$ with the same foliation $\cF$ of
$S$ restricted to $\widehat\Delta$, together with the
quotient map $F:\Delta_\cQ\to\Delta$. Then we start
with the $F$-invariant ergodic measure $\nu$ and obtain
an $\widehat F$-invariant ergodic measure $\eta$ on
$\widehat\Delta$.

\subsection{The disintegration property}
\label{sec:disint-property}

Here we show that the previous measure $\eta$ admits a
smooth disintegration as stated at item (3) of
Definition~\ref{def:hyp-skew-product} under the following
assumptions on $\cF$ and $f$, besides invariance and
contraction as in the previous subsection:
\begin{itemize}
\item $S/\cF$ is the compact closure of an open domain of a
  finite dimensional smooth manifold;
\item $f:\cF\setminus\Gamma\to \cF$ is a 
  uniformly expanding
  Markov map, according to
  Section~\ref{sec:unif-exp-markov};
\item The invariant density
  $\phi=d\mu_f/d\Leb$ is a $C^1$ function.
\end{itemize}
We note that the assumption of denumerability of $\Gamma$
ensures that $S\setminus\Gamma$ is $\sigma$-compact.

The general strategy of the argument is to obtain the
disintegration of $\eta$ as fixed point of a certain
transfer operator whose action from fiber to fiber varies
differentiably. 

To avoid the introduction of extra notation and to focus on
the the geometric Lorenz attractor case, from now on we take
$\widehat F:\widehat\Delta_\cQ\to\widehat\Delta$ on the
space $\widehat\Delta$ with the foliation $\cF$ of $S$
restricted to $\widehat\Delta$, together with the quotient
map $F:\Delta_\cQ\to\Delta$, as our main maps.
Let us consider the set $\Omega$ of measurable families of
probability measures $\omega=(\omega_x)_{x\in I}$ supported
on the strong-stable leaves
$\{\pi^{-1}(x)\cap\Lambda\}_{x\in I}$ inside the geometric
Lorenz attractor $\Lambda$. We note that $I\ni x\mapsto
\omega_x$ is measurable if the real function
$x\mapsto\int\psi\,d\omega_x$ is measurable for every
continuous function $\psi:\widehat\Delta_\cQ\to\RR$ with
compact support.  Each such family defines a probability
measure $\widetilde{\nu^\omega}$ through the linear
functional
  \begin{align*}
  C^0_0(\widehat\Delta_\cQ,\RR)\ni \psi \mapsto \int \int \psi \,
  d\omega_x \,d\nu(x)    
  \end{align*}
  where $C^0_0(\widehat\Delta_\cQ,\RR)$ is the set of all
  continuous functions on $\widehat\Delta_\cQ$ with compact
  support; see e.g. \cite{EG92} for the definition and
  properties of Radon measures.
We define the operator $\cL: \Omega \to \Omega$ such that
\begin{align*}
  \int \int\psi\,d\cL(\omega)_x \,d\nu(x)
  =
  \int \int \psi\circ \widehat F \,d\omega_x \,d\nu(x),
  \quad
  \psi\in C^0_0(\widehat\Delta_\cQ,\RR).
\end{align*}
This is the dual of the usual Koopman operator $U\colon
C^0_0(\widehat\Delta_\cQ,\RR) \to
C^0_0(\widehat\Delta_\cQ,\RR)$ given by $U(\psi)=\psi \circ
\widehat F$.
Moreover, it defines an operator using the
disintegration of the measure $\widetilde{\nu^\omega}$
defined by the linear functional
\begin{align*}
  C^0_0(\widehat\Delta_\cQ,\RR)\ni \psi \mapsto \int \int
  \psi\circ \widehat F \,
  d\omega_x \,d\nu(x)
\end{align*}
with respect to the measurable partition of $\widehat\Delta$
given by the restriction of $\cF$ to this set. This defines
the family $\big(\cL(\omega)\big)_x$ for $\nu$-almost every
$x\in I$ in a unique way; see e.g. \cite{Ro62} for more on
disintegration with respect to measurable partitions of
Lebesgue spaces.
The next lemma establishes that invariant measures arise as fixed
points for $\cL$. More precisely,

\begin{lemma}
\label{le.fconvergence}
Given $\omega\in\Omega$ and $\psi\in
C_0^0(\widehat\Delta_\cQ,\RR)$ there exists the limit
\begin{align}
  \label{eq:bothlim}
\nu^{\bar\omega}(\psi):=\lim_n
(\cL^n\omega)\psi=\lim_n\int\int\psi\circ \widehat F^n\,d\omega_x \,d\nu(x).
\end{align}
Moreover, the probability measure $\nu^{\bar\omega}$ is $\widehat F$-invariant and the
  family $\bar\omega$ does not depend on $\omega$ on
  $\nu$-almost every point; in fact $\nu^{\bar\omega}=\eta$.
\end{lemma}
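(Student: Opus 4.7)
The plan is to compare an arbitrary family $\omega\in\Omega$ against the fiber disintegration of the already-constructed $\widehat F$-invariant measure $\eta$ and exploit the uniform contraction of $\widehat F$ on stable leaves given by item~(4) of Definition~\ref{def:hyp-skew-product} together with uniform continuity of $\psi$.

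First I would reformulate the operator in terms of the total measure $\widetilde{\nu^\omega}$. From the defining identity and $\pi\circ\widehat F = F\circ\pi$ together with $F$-invariance of $\nu$, it follows that $\pi_*(\widehat F)_*\widetilde{\nu^\omega}=\nu$, so $\cL(\omega)$ is a bona fide element of $\Omega$ and, more importantly, $(\cL^n\omega)(\psi)=\int\psi\circ\widehat F^n\,d\widetilde{\nu^\omega}$ for every $n\ge0$ and every $\psi\in C_0^0(\widehat\De_\cQ,\RR)$. Next I would take $\eta^*=\{\eta_x\}_{x\in I}$ to be the disintegration of $\eta$ over $\nu$ (which exists because $\widehat\De$ sits inside a compact Riemannian manifold and $\pi_*\eta=\nu_0|_\De$ rescaled; here it is actually item~(3) in Definition~\ref{def:hyp-skew-product} whose existence we will prove at the end of this process, but even without smoothness the measurable disintegration is standard). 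Since $\widetilde{\nu^{\eta^*}}=\eta$ and $\eta$ is $\widehat F$-invariant, one gets $(\cL^n\eta^*)(\psi)=\int\psi\circ\widehat F^n\,d\eta=\eta(\psi)$ for every $n$, so $\eta^*$ is a fixed point of $\cL$ that realises $\eta(\psi)$ as a constant sequence.

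The heart of the argument is then a fiberwise comparison:
\begin{align*}
(\cL^n\omega)(\psi)-\eta(\psi)
=\int\left[\int\psi\circ\widehat F^n\,d\omega_x-\int\psi\circ\widehat F^n\,d\eta_x\right]d\nu(x).
\end{align*}
Both inner measures are supported on $\pi^{-1}(x)$, and item~(4) of Definition~\ref{def:hyp-skew-product} gives $\diam\bigl(\widehat F^n(\pi^{-1}(x))\bigr)\le\kappa^{-n}\diam(\pi^{-1}(x))\le\kappa^{-n}D$ with $D:=\sup_{x}\diam(\pi^{-1}(x))<\infty$ since $\widehat\De$ lies in a compact manifold. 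The function $\psi$ is continuous with compact support, hence uniformly continuous; given $\epsilon>0$, pick $n$ so large that $\kappa^{-n}D$ is smaller than the corresponding modulus of continuity, fix an arbitrary $w(x)\in\pi^{-1}(x)$ and bound both $\int\psi\circ\widehat F^n\,d\omega_x$ and $\int\psi\circ\widehat F^n\,d\eta_x$ by $\psi(\widehat F^n w(x))$ up to $\epsilon$. Integrating over $x$ gives $|(\cL^n\omega)(\psi)-\eta(\psi)|\le 2\epsilon$, which proves existence of the limit in \eqref{eq:bothlim} and identifies it as $\eta(\psi)$, simultaneously establishing that the limit does not depend on the choice of $\omega$.

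From $\nu^{\bar\omega}(\psi)=\eta(\psi)$ for every $\psi\in C_0^0(\widehat\De_\cQ,\RR)$ we conclude $\nu^{\bar\omega}=\eta$; its $\widehat F$-invariance is inherited from $\eta$ (or, alternatively, follows directly by comparing the limit for $\psi\circ\widehat F$ with the shifted sequence $(\cL^{n+1}\omega)(\psi)$). Finally, $\bar\omega$ is the fiber disintegration of $\nu^{\bar\omega}=\eta$ over $\nu$, and by uniqueness of disintegration along the measurable partition $\{\pi^{-1}(x)\}_{x\in I}$ this family is uniquely determined modulo $\nu$-null sets, so it agrees $\nu$-almost surely with $\eta^*$ and is in particular independent of the starting $\omega$. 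I expect the only subtle point to be the justification that $\psi\circ\widehat F^n$ becomes uniformly constant along fibers, which relies crucially on $\diam(\pi^{-1}(x))$ being uniformly bounded; this is not explicitly stated but is a consequence of $\widehat\De\subset M$ compact together with the continuity of $\pi$, so no serious obstacle arises.
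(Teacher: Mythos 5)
Your proof is correct but follows a genuinely different route from the paper's. The paper first proves $(\cL^n\omega)\psi$ is a Cauchy sequence by comparing $\psi\circ\widehat F^{n+k}$ along fibers of $x$ with $\psi\circ\widehat F^n$ along fibers of $F^k(x)$ (using $F$-invariance of $\nu$ to shift the base index), then goes to considerable lengths to establish $\widehat F$-invariance of the limit -- the obstruction being that $\psi\circ\widehat F$ is no longer compactly supported, which forces a truncation by a nested exhaustion $K_l$ and a monotone-convergence pass -- and only then invokes Proposition~\ref{pr:liftmesureP} to identify the limit with $\eta$. You instead take the Rokhlin disintegration $\{\eta_x\}_x$ of the already-constructed $\widehat F$-invariant measure $\eta$ over $\nu$, observe it is a fixed family for $\cL$ since $\widetilde{\nu^{\eta^*}}=\eta$, and compare $(\cL^n\omega)\psi$ against the constant sequence $\eta(\psi)$ fiber by fiber, where item~(4) of Definition~\ref{def:hyp-skew-product} plus uniform continuity of $\psi$ makes $\psi\circ\widehat F^n$ nearly constant on each $\pi^{-1}(x)$. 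This collapses existence, identification of the limit, $\widehat F$-invariance, and independence of $\omega$ into one estimate, sidestepping the delicate truncation argument entirely. The trade-off is that you need the a priori measurable disintegration of $\eta$; that is unproblematic here, since Rokhlin's theorem (on the measurable partition $\{\pi^{-1}(x)\}$ of a Lebesgue space) is logically independent of the \emph{smooth} disintegration the section is ultimately after, so there is no circularity. Two small nits: you write $\pi_*\eta=\nu_0|_\De$ rescaled, but $\pi_*\eta=\nu$ (the $F$-invariant measure obtained by inducing, not the $f$-invariant one); and your alternative route to invariance -- comparing the limit for $\psi\circ\widehat F$ against $(\cL^{n+1}\omega)\psi$ -- runs into exactly the non-compact-support issue the paper must handle, so the inheritance-from-$\eta$ argument is the one to keep.
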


\begin{proof}
  Let $\psi\in C^0_0(\widehat \Delta_\cQ,\mathbb R)$ and $\omega\in\Omega$ be fixed.  Given
  $\epsilon>0$, let $\delta>0$ be such that
  $|\psi(A)-\psi(B)|\le\epsilon$ for all $A,B\subset S$ with
  $\dist(A,B)\le \delta$, where $\dist$ denotes the
  euclidean distance.  Since the partition $\cF$ is assumed
  to be contracting, there exists $n_0\ge 0$ such that
  $\diam(\widehat F^n(\xi))\le\delta$ for every $\xi\in\cF$ and any
  $n\ge n_0$.  Let $n + k \ge n \ge n_0$, $A\subset\pi^{-1}\{x\}$
  and $B\subset \pi^{-1}\{f^k(x)\}$. Then
  \begin{align}\label{eq:unif-cont}
    |\psi\circ \widehat F^{n+k}(A) - \psi\circ \widehat F^n(B)|
    \le
    \sup (\psi \mid \widehat F^{n+k}(\pi^{-1}\{x\} ))
    - \inf (\psi \mid \widehat F^{n}(\pi^{-1}\{F^k(x)\}))
    \le \epsilon
  \end{align}
  since $\widehat F^{n+k}(\pi^{-1}\{x\}) \subset  \widehat F^n(\pi^{-1}\{F^k(x)\})$.
Thus, using the $F^k$-invariance of $\nu$ we get from the previous estimate that
\begin{align*}
  |(\cL^{n+k}\omega)\psi-(\cL^n\omega)\psi|
  &=
  \left|\int\int \psi \circ \widehat F^{n+k} \,d\omega_{x}\,d\nu(x) - 
    \int\int \psi \circ \widehat F^{n}\,d\omega_{x}\,d\nu(x)\right|
  \\
  &=
  \left|\int\int \psi \circ \widehat F^{n+k} \,d\omega_{x}\,d\nu(x) - 
    \int\int \psi \circ \widehat
    F^{n}\,d\omega_{F^k(x)}\,d\nu(x)\right|
  \\
  &\le
  \int\left|
    \int \psi \circ \widehat F^{n+k} \,d\omega_{x}
    -\int \psi \circ \widehat
    F^{n}\,d\omega_{F^k(x)}\right|d\nu(x)
  \le\epsilon.
\end{align*}
This shows that the sequence $(\cL^n\omega)\psi$
is a Cauchy sequence and so it converges in the Banach space
$C^0_0(\widehat \Delta_\cQ,\mathbb R)$. It is
straightforward to check that, since each element of the
sequence is a normalized positive linear functional, the
limit is a family $\bar\omega\in\Omega$ such that
$\nu^{\bar\omega}$ has the same functional properties, and
so represents a probability measure.
We remark that for any compact subset $K$ of
$\widehat\Delta$ we can rewrite the last inequality above as
follows
\begin{align}\label{eq:unifcomp}
  \int\left|
    \int_K \psi \circ \widehat F^{n+k} \,d\omega_{x}
    -\int_K \psi \circ \widehat
    F^{n}\,d\omega_{F^k(x)}\right|d\nu(x)
  \le\epsilon.
\end{align}
This shows that the convergence does not depend on the
support of $\psi\circ\widehat F^n$ for arbitrarily large
$n$.
Note also that given $\vfi\in C^0_0(\Delta_\cQ,\RR)$ the
function $\vfi\circ\pi$ belongs to
$C_0^0(\widehat\Delta_\cQ,\RR)$ and is constant on each leaf
of $\cF$ through $\widehat\Delta$, which yields
\begin{align*}
  \nu^{\bar\omega}(\vfi\circ\pi)
  =
  \lim_n\int\int\vfi\circ\pi\circ \widehat F^n \, d\omega_x\,
  d\nu(x)
  =
  \lim_n\int \vfi(F^n(x)) \, d\nu(x)
  =
  \int\vfi\,d\nu,
\end{align*}
that is $\pi_*(\nu^{\bar\omega})=\nu$. 
Hence, if we show that $\nu^{\bar\omega}$ is $\widehat
F$-invariant, we can use Proposition~\ref{pr:liftmesureP} to
conclude that $\eta=\nu^{\bar\omega}$ independently of the
starting family $\omega\in\Omega$.  To prove invariance, we
observe that
\begin{align*}
  (\cL^{n+1}\omega)\psi
  =
  \cL(\cL^n\omega)\psi
  =
  \int\int\psi\circ \widehat F\,d(\cL^n(\omega)_x)\,d\nu(x)
\end{align*}
but $\psi\circ\widehat F$ is \emph{not continuous} with
compact support for $\psi\in
C_0^0(\widehat\Delta_\cQ,\RR)$. From~\eqref{eq:unifcomp} we
have that, using the $\sigma$-compactness of
$\widehat\Delta_\cQ$ and choosing a nested increasing
sequence $K_l$ of compact sets growing to $\widehat\Delta$
and a \emph{non-negative} $\psi\in
C^0_0(\widehat\Delta_\cQ,\RR)$, we get for $n\ge n_0, m\ge1,
l\ge1$
\begin{align}\label{eq:unifineq}
  \left|
    \int\int_{K_l}\psi\circ\widehat F\,d(\cL^n\omega)_x\,d\nu(x)
    -
    \int\int_{K_l}\psi\circ\widehat F\,d(\cL^{n+m}\omega)_x\,d\nu(x)
    \right|\le\epsilon.
  \end{align}
To ensure that $\psi\circ\widehat F\mid K_l$ is continuous with
compact support, we observe that
\begin{align*}
  \supp(\psi\circ\widehat F)
  =
  \widehat F^{-1}(\supp\psi)
  =
  \bigcup_{\omega\in\cQ}\big(P^{R(\omega)}\big)^{-1}(\supp\psi)
\end{align*}
is a denumerable union of compacts in $\widehat\Delta$,
because $\supp\psi$ is compact and $P:S^*\to S$ is a
diffeomorphism onto its image. Thus we can choose an
enumeration $\{\omega_n\}_{n\ge1}$ of $\cQ$ and define
\begin{align*}
  K_l:=\bigcup_{i=1}^l \big(P^{R(\omega_i)}\big)^{-1}(\supp\psi)
\end{align*}
to obtain a sequence such that $\psi\circ\widehat F\mid
K_l\nearrow\psi\circ\widehat F$ is a monotonous sequence of
continuous functions of compact support.
Hence, letting $m$ grow without bound in~\eqref{eq:unifineq}
we arrive at
\begin{align*}
  \left|
    \int\int_{K_l} \psi\circ\widehat F
    \,d(\cL^n\omega)_x\,d\nu(x)
    -
    \int\int_{K_l} \psi\circ\widehat F
    \,d\bar\omega_x\,d\nu(x)
    \right|\le\epsilon.
\end{align*}
Making $l$ grow we finally obtain
$|\cL^{n+1}\psi-\cL(\bar\omega)\psi|\le\epsilon$ for $n\ge
n_0$. These arguments assumed that $\psi$ is non-negative;
but for a continuous function with compact support, we can
write $\psi=\psi^+-\psi^-$ with $\psi^\pm$ non-negative and
still continuous, and then apply the same argument to each
summand using linearity.

Finally, $\epsilon>0$ can be arbitrarily chosen at the
start, so we have proved that
\begin{align*}
  \bar\omega=\lim_n \cL^n\omega=\cL(\bar\omega)
  \quad\text{which implies}\quad
  \nu^{\bar\omega}=\widehat F_*(\nu^{\bar\omega})
\end{align*}
as needed to complete the uniqueness part of the statement.
\end{proof}

\subsubsection{Conditional measures as uniform limits}
\label{sec:condit-measur-as}

To prove that $\bar u(x):=\int u(x,y)\,d\bar\omega_x(y)$ is
a $C^1$ map with bounded derivative, for each $C^1$ function
$u:\widehat\Delta_\cQ\to\RR$ with compact support, we need
some preliminary results.
%
%
%
We can be more precise about the operator $\cL$ in the next proposition,
whose proof will be given later in this section.

\begin{proposition}
  \label{pr:cLdisint}
  For every bounded measurable function
  $\psi:\widehat\Delta_\cQ\to\RR$ and for the constant
  family $\upsilon=(\upsilon_x)\in\Omega$ with
  $\upsilon_x=\lambda\mid_{\pi^{-1}(\{x\}) }$ for each $x\in
  I$, we have for $\lambda$-almost every $x\in I$
  \begin{align*}
    \int\psi\,d(\cL^n\upsilon)_x
    =
    \sum_{h\in\cH_n}
    \int \frac{\phi\cdot\psi\circ \widehat F^n_t}{\phi\circ
      F^n\cdot DF^n} (h(x)) \, d\lambda(t), \quad 
    \text{for each} \quad n\ge1
  \end{align*}
  where $\phi:=d\nu/d\lambda$ is the H\"older-continuous
  density of $\nu$ with respect to $\lambda$; and we write,
  to simplify the notation, $\widehat F^n_t(z):=\widehat
  F^n(z,t)$ for $(z,t)\in \widehat\Delta_\cQ$.
\end{proposition}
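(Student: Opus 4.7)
The plan is to identify $\cL^n\upsilon$ with the disintegration, over the base measure $\nu$, of the pushforward $\widehat F^n_{*}\widetilde{\nu^{\upsilon}}$, and then to compute that pushforward explicitly via the change of variables formula along the inverse branches of $F^n$.

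First I would recast the operator $\cL$ in a more concrete form: the defining identity in fact characterizes $\cL(\omega)$ as the disintegration along $\pi$, with respect to $\nu$, of the Borel measure $\widehat F_{*}\widetilde{\nu^{\omega}}$. This is well-posed because each $\omega_x$ is a probability on the fiber over $x$ (so $\pi_{*}\widetilde{\nu^{\omega}}=\nu$) and because $\pi\circ\widehat F=F\circ\pi$ combined with $F_{*}\nu=\nu$ gives $\pi_{*}(\widehat F_{*}\widetilde{\nu^{\omega}})=\nu$. Iterating, $\cL^n\omega$ disintegrates $\widehat F^n_{*}\widetilde{\nu^{\omega}}$ over $\nu$. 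Applied to $\omega=\upsilon$ and a bounded measurable $\psi$ on $\widehat\Delta_\cQ$, Fubini together with $d\nu=\phi\,d\lambda$ yields
\begin{align*}
  \int\psi\,d\bigl(\widehat F^n_{*}\widetilde{\nu^{\upsilon}}\bigr)
  =\int\!\!\int \psi\bigl(\widehat F^n_t(x)\bigr)\,\phi(x)\,d\lambda(t)\,d\lambda(x).
\end{align*}

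Next I would exploit the Markov structure: the refinement $\bigvee_{j=0}^{n-1}F^{-j}\cQ$ partitions $\Delta$ (Lebesgue mod zero) into pieces on each of which $F^n$ is a diffeomorphism onto $\Delta$, indexed by the inverse branches $h\in\cH_n$. Performing the change of variables $x=h(y)$ on each piece, and then inserting $\phi(y)/\phi(y)$ (licit since $\phi>0$ by Proposition~\ref{pr:existence-smooth-acim}) to rewrite the outer integration against $d\nu$, transforms the previous display into
\begin{align*}
  \int\psi\,d\bigl(\widehat F^n_{*}\widetilde{\nu^{\upsilon}}\bigr)
  =\int\Biggl[\sum_{h\in\cH_n}\int \frac{\phi(h(y))\,\psi\bigl(\widehat F^n_t(h(y))\bigr)}{\phi(y)\,DF^n(h(y))}\,d\lambda(t)\Biggr]d\nu(y).
\end{align*}
For each fixed $y$, the bracket is a non-negative linear functional in $\psi$ and defines a Borel measure on $\widehat\Delta_\cQ$ which is supported on $\pi^{-1}(\{y\})$, since $\pi\circ\widehat F^n_t(h(y))=F^n(h(y))=y$.

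Finally, by uniqueness of the disintegration of $\widehat F^n_{*}\widetilde{\nu^{\upsilon}}$ over $\nu$ --- applied to a countable subset of $C^0_0(\widehat\Delta_\cQ,\RR)$ that is dense in the sup-norm --- the bracketed family agrees with $(\cL^n\upsilon)_y$ for $\nu$-almost every $y$, hence also for $\lambda$-almost every $y$ since $\nu$ and $\lambda$ are equivalent on $\Delta$. This is the claimed identity. The main technical point I would be careful about is the last step: one must verify that the bracketed expression defines, for $\nu$-a.e.\ $y$, a Borel \emph{probability} measure depending measurably on $y$. The measurability is immediate from the countability of $\cH_n$ and the continuity of $\phi$ and $DF^n$ on each partition element; that the total mass equals $1$ follows by taking $\psi\equiv 1$ and invoking the $F^n$-invariance of $\nu$.
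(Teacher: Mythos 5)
Your proof is correct and takes essentially the same route as the paper's: both reduce the identity to a change of variables along the inverse branches of $F^n$ (the paper works out $n=1$ and invokes the analogous computation for general $n$), insert the density $\phi$, and conclude by uniqueness of disintegration. The only difference is presentational: you first recast $\cL^n\upsilon$ abstractly as the disintegration of $\widehat F^n_{*}\widetilde{\nu^\upsilon}$ over $\nu$ and also record the measurability/probability check, whereas the paper manipulates the double integral $\int\int\psi\,d(\cL^n\upsilon)_x\,d\nu(x)$ directly and leaves those routine verifications implicit.
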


\begin{remark}
  \label{rmk:RPF}
  Note that $DF>0$ and so it is useful to write
  the transfer or Ruelle-Perron-Frobenius operator
  associated to $F$ and the potential $-\log|DF|$ as
  $$
  \cP(\vfi)(x):=\sum_{h\in\cH_1} \frac1{DF(h(x))}  \vfi(h(x)).
  $$ 
  Hence,
  $\cP \phi=\phi$ and 
  \begin{align*}
    \sum_{h\in\cH_n}
    \frac{\phi\cdot\psi\circ \widehat F^n_t}
    {\phi\circ  F^n\cdot DF^n} \circ h
    =
    \frac1{\phi}
        \sum_{h\in\cH_n}
    \frac{\phi\cdot\psi\circ \widehat F^n_t}
    {DF^n} \circ h
    =
      \frac1{\phi}\cP^n(\phi\cdot\psi\circ \widehat F^n_t).
  \end{align*}
\end{remark}

From Lemma~\ref{le.fconvergence} we can obtain the invariant
family $\bar\omega$ as the limit of
$(\cL^n\upsilon)_{n\ge1}$, so Proposition~\ref{pr:cLdisint}
provides an explicit expression to approximate the
elements of $\bar\omega$. The proof of this proposition
becomes simpler if we use the following lemmas.

\begin{lemma}
  \label{le:abs-conv}
  For every fixed $n\in\ZZ^+$, every $t\in [-\frac12,\frac12]$ and each bounded measurable
  function $\psi:\widehat\Delta_\cQ\to\RR$, the series given
  by $\frac1{\phi}\cP^n(\phi\cdot\psi\circ \widehat F^n_t)$
  is absolutely convergent Lebesgue almost everywhere.
\end{lemma}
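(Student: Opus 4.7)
The plan is to reduce the absolute convergence to invariance of the density $\phi$ under the transfer operator $\cP$ and boundedness of $\psi$. Writing out the sum explicitly using Remark~\ref{rmk:RPF}, one has
\begin{align*}
  \frac1{\phi(x)}\cP^n(\phi\cdot\psi\circ\widehat F^n_t)(x)
  = \frac1{\phi(x)}\sum_{h\in\cH_n}
  \frac{\phi(h(x))\,\psi(\widehat F^n_t(h(x)))}{DF^n(h(x))},
\end{align*}
and the crucial observation is that every term in the series is taken modulus-wise against the positive series $\sum_{h\in\cH_n}\phi(h(x))/DF^n(h(x))$, which is exactly $\cP^n\phi(x)$.

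Next I would invoke Proposition~\ref{pr:existence-smooth-acim} together with Lemma~\ref{le:smooth-dens}, which guarantee that $\phi=d\nu/d\lambda$ is a strictly positive $C^1$ density satisfying $\cP\phi=\phi$ $\lambda$-a.e., and consequently $\cP^n\phi=\phi$ $\lambda$-a.e. for every $n\ge1$. In particular, for Lebesgue-a.e.\ $x\in\Delta$ the series of non-negative terms $\sum_{h\in\cH_n}\phi(h(x))/DF^n(h(x))$ converges to the finite value $\phi(x)$. This is where the almost-everywhere qualification in the statement enters.

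With this in hand, absolute convergence is immediate: for a bounded measurable $\psi$ with $\|\psi\|_0<\infty$, and for Lebesgue-a.e.\ $x\in\Delta$, any choice of $t\in[-1/2,1/2]$ gives
\begin{align*}
  \sum_{h\in\cH_n}\left|\frac{\phi(h(x))\,\psi(\widehat F^n_t(h(x)))}{DF^n(h(x))}\right|
  \le \|\psi\|_0\sum_{h\in\cH_n}\frac{\phi(h(x))}{DF^n(h(x))}
  = \|\psi\|_0\,\phi(x)<\infty.
\end{align*}
Dividing by $\phi(x)>0$ (which also holds a.e.\ by strict positivity of $\phi$) yields the uniform bound $\|\psi\|_0$ on the absolute sum, concluding the proof.

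There is essentially no real obstacle here: the argument is a direct consequence of the $\cP$-invariance of $\phi$ and the fact that $\phi$ is bounded away from zero. The only minor point to keep track of is that the identities $\cP^n\phi=\phi$ and $\phi>0$ need only hold $\lambda$-almost everywhere, which matches the quantifier in the statement; no distortion estimate, hyperbolic-time bookkeeping, or summability tail condition on $R$ is needed at this stage.
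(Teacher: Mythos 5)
Your argument is correct, but it goes by a different route than the paper's. The paper bounds each summand by first using that $\phi$ is bounded above and away from zero (so $\phi(h(x))/\phi(x)\le C$), and then controls $1/DF^n(h(x))$ via the uniform bounded-distortion estimate of Proposition~\ref{pr:bdd-dist-power} combined with the mean value theorem: $1/DF^n(h(x))\lesssim \lambda(h(\Delta))/\lambda(\Delta)$, after which summing over the partition $\cQ=\{h(\Delta)\}_{h\in\cH_n}$ gives a finite bound. You instead observe that $\sum_{h\in\cH_n}\phi(h(x))/DF^n(h(x))$ is literally $\cP^n\phi(x)=\phi(x)$, which converges pointwise a.e.\ with no distortion input at all, and even yields the sharper uniform bound $\|\psi\|_0$. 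Both are valid; the trade-off is that your proof leans on the RPF fixed-point identity $\cP\phi=\phi$ (whose pointwise iteration $\cP^n\phi=\phi$ a.e.\ requires the tacit but standard observation that $\cP$ maps $\lambda$-null sets to $\lambda$-null sets), whereas the paper leans on the bounded-distortion machinery it has already built and will reuse elsewhere. Your version is more economical here, and it has the incidental benefit of showing the operator $\frac1\phi\cP^n(\phi\cdot(\cdot))$ is an $L^\infty$-contraction, a fact the paper re-derives separately in Lemma~\ref{le:spectral1}.
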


\begin{proof}
  Indeed, since $\phi$ is a $C^1$
  function bounded from above and below (see
  Subsection~\ref{sec:renyi-condit-c1} and
  Lemma~\ref{le:smooth-dens}) there exists $C>0$, depending only on
  $\phi$, such that for $\lambda$-almost
  every $x\in I$
  \begin{align*}
    \left| \sum_{h\in\cH_n}
    \frac{\phi\cdot\psi\circ \widehat F^n_t}{\phi\circ
      F^n\cdot DF^n} (h(x))
      \right|
      \le
      \sum_{h\in\cH_n}\frac{ C \cdot\|\psi\|_\infty}{DF^n(h(x))}
      \le
      \sum_{h\in\cH_n}\frac{ C \cdot
        \lambda(h(\Delta))}{\lambda(\Delta)}
      <\infty
  \end{align*}
  since $\psi$ is essentially bounded,
  $\{h(\Delta)\}_{h\in\cH_n}=\cQ$ is a partition of $\Delta$
  Lebesgue modulo zero, and by the bounded distortion
  property combined with the mean value theorem. Indeed, from
  Proposition~\ref{pr:bdd-dist-power} we have for every
  $h\in\cH_n, x\in\Delta$ and some $z=z(h)\in h(\Delta)$
  \begin{align}\label{eq:bdd-unif-Delta}
    \lambda(\Delta)=\lambda(h(\Delta))DF^n(h(z))
    \le
    B_0\cdot \lambda(h(\Delta))DF^n(h(x))
  \end{align}
  and $B_0$ does not depend on $n\ge1$.
\end{proof}

From the previous argument we obtain a useful property for the transfer operator
$\cP$ for the expanding map $F$, to be used in what follows.

\begin{lemma}
  \label{le:spectral1}
  The spectral radius of the operator $\cP:L^\infty(I,\lambda)\to
  L^\infty(I,\lambda)$ is equal to $1$.
\end{lemma}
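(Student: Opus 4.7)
The plan is to show the two inequalities $\rho(\cP) \le 1$ and $\rho(\cP) \ge 1$ separately, both of which follow quickly from facts already in hand.

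For the upper bound I would observe that the previous lemma's estimate is really an estimate on $\|\cP^n \mathbf{1}\|_\infty$ in disguise. For any $\varphi \in L^\infty(I,\lambda)$ and any $n \ge 1$, the pointwise estimate
\begin{align*}
  |\cP^n\varphi(x)|
  \le \|\varphi\|_\infty \sum_{h\in\cH_n} \frac{1}{DF^n(h(x))}
\end{align*}
holds for $\lambda$-almost every $x\in\Delta$. Applying the uniform bounded distortion property of Proposition~\ref{pr:bdd-dist-power} in the form~\eqref{eq:bdd-unif-Delta}, for each $h\in\cH_n$ we have $(DF^n(h(x)))^{-1} \le B_0\,\lambda(h(\Delta))/\lambda(\Delta)$. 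Summing over the partition $\cH_n$, whose images $\{h(\Delta)\}_{h\in\cH_n}$ tile $\Delta$ Lebesgue modulo zero, gives
\begin{align*}
  \sum_{h\in\cH_n} \frac{1}{DF^n(h(x))}
  \le \frac{B_0}{\lambda(\Delta)} \sum_{h\in\cH_n} \lambda(h(\Delta))
  = B_0,
\end{align*}
with $B_0$ independent of $n$. Hence $\|\cP^n\|_{L^\infty\to L^\infty} \le B_0$ for every $n$, and the spectral radius formula yields $\rho(\cP) = \lim_{n} \|\cP^n\|^{1/n} \le 1$.

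For the lower bound I would recall from Subsection~\ref{sec:renyi-condit-c1} and Lemma~\ref{le:smooth-dens} that the invariant density $\phi = d\nu/d\lambda$ is a strictly positive $C^1$ function on $\Delta$ (hence bounded above and away from zero), so $\phi \in L^\infty(I,\lambda)\setminus\{0\}$. By Remark~\ref{rmk:RPF} we have the fixed point identity $\cP\phi = \phi$, so $1$ is an eigenvalue of $\cP$ acting on $L^\infty$ with eigenvector $\phi$, giving $\rho(\cP)\ge 1$.

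Combining the two bounds gives $\rho(\cP) = 1$. There is no substantial obstacle: the whole argument is a one-line consequence of the uniform distortion estimate already proved (which yields the uniform bound $\|\cP^n\|\le B_0$) together with the existence of the bounded invariant density $\phi$ (which prevents the spectral radius from collapsing below $1$). The only point worth being careful about is that the distortion constant $B_0$ is genuinely independent of the iterate $n$, but this is exactly what Proposition~\ref{pr:bdd-dist-power} provides.
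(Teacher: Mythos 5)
Your proof is correct and follows essentially the same route as the paper: the lower bound $\rho(\cP)\ge 1$ from the fixed point $\cP\phi=\phi$ with $\phi$ bounded and strictly positive, and the upper bound $\rho(\cP)\le 1$ from the uniform-in-$n$ bound $\|\cP^n\|_{L^\infty\to L^\infty}\le B_0$ supplied by Proposition~\ref{pr:bdd-dist-power} via~\eqref{eq:bdd-unif-Delta}. The only difference is that you spell out explicitly how the sum $\sum_{h\in\cH_n}1/DF^n(h(x))$ collapses to $B_0$, which the paper leaves implicit in the step $\cP^n(1)\le B_0$.
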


\begin{proof}
  We note that the operator is well defined on essentially
  bounded functions by the previous lemma. Since
  $\cP(\phi)=\phi$ the spectral radius is at least $1$.
  Taking a bounded measurable function
  $\psi:\widehat\Delta_\cQ\to\RR$ we can write
  \begin{align*}
   |\cP^n(\psi)(x)|
   &= 
   \left|
     \sum_{h\in\cH_n}
     \frac{1}{DF^n(h(x))} \; \psi (h(x))
   \right|
   \le
   \|\psi\|_\infty\cP^n(1)
   \le
   B_0\cdot\|\psi\|_\infty 
  \end{align*}
  for Lebesgue almost every $x\in I$ and every $n\ge1$, using
  the relation~\eqref{eq:bdd-unif-Delta}. Hence the spectral radius verifies
  $\text{sp}(\cP)=\limsup\sqrt[n]{ \|\cP^n\| }\le\lim\sqrt[n]{B_0}=1$.
\end{proof}

\begin{lemma}
  \label{le:RPF-conv}
  The sequence $\big(\frac1{\phi}\cP^n(\phi\cdot\psi\circ
  \widehat F^n_t)\big)_{n\ge1}$ is uniformly convergent in
  $(x,t)\in \Delta\times I$ for each continuous function
  $\psi:\widehat\Delta_\cQ\to\RR$ with compact support.
\end{lemma}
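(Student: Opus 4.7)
The plan is to show uniform Cauchy convergence on $\Delta\times I$ by directly estimating, for any $m\ge 0$ and $n$ large,
\begin{align*}
A_n(x,t):=\frac{1}{\phi(x)}\cP^n(\phi\cdot\psi\circ\widehat F^n_t)(x)
\end{align*}
against $A_{n+m}(x,t)$. The bound will depend only on $n$ and on the modulus of continuity $\omega_\psi$ of $\psi$, which exists and tends to $0$ at $0$ because $\psi$ is continuous with compact support, hence uniformly continuous.

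The first step is a decomposition $\cP^{n+m}=\cP^n\circ\cP^m$ together with the key geometric identity: for each $h\in\cH_m$ we have $F^m(h(z))=z$, so the skew-product structure yields $\widehat F^m(h(z),t)=(z,\tau_h(z,t))$ for some fiber coordinate $\tau_h(z,t)\in\pi^{-1}(z)$, and therefore
\begin{align*}
\widehat F^{n+m}(h(z),t)=\widehat F^n(z,\tau_h(z,t)).
\end{align*}
The fibers $\pi^{-1}(z)$ have uniformly bounded diameter $D$, so $|\tau_h(z,t)-t|\le D$.

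The second step uses the fiber contraction of $\widehat F^n$: since $\pi(z,\tau_h(z,t))=\pi(z,t)=z$, Definition~\ref{def:hyp-skew-product}(4) gives $d(\widehat F^n(z,\tau_h(z,t)),\widehat F^n(z,t))\le\kappa^{-n}D$, so uniform continuity yields
\begin{align*}
|\psi(\widehat F^{n+m}(h(z),t))-\psi(\widehat F^n(z,t))|\le\omega_\psi(\kappa^{-n}D),
\end{align*}
uniformly in $h\in\cH_m$, $z\in\Delta$ and $t\in I$. Since $\cP\phi=\phi$ (as $\nu=\phi\lambda$ is $F$-invariant), summing the identity $\cP^m(\phi\cdot\psi\circ\widehat F^{n+m}_t)(z)=\sum_{h\in\cH_m}\phi(h(z))\psi(\widehat F^{n+m}(h(z),t))/DF^m(h(z))$ and factoring $\psi(\widehat F^n(z,t))$ out gives
\begin{align*}
\cP^m(\phi\cdot\psi\circ\widehat F^{n+m}_t)(z)
=\phi(z)\,\psi(\widehat F^n(z,t))+\phi(z)\,\epsilon_n(z,t),
\qquad |\epsilon_n|\le\omega_\psi(\kappa^{-n}D).
\end{align*}

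Applying $\cP^n$ to both sides, the main term reproduces $\cP^n(\phi\cdot\psi\circ\widehat F^n_t)(x)$ and the error is controlled by $\cP^n(\phi\cdot|\epsilon_n|)(x)\le\omega_\psi(\kappa^{-n}D)\cdot\|\phi\|_\infty\cP^n(1)(x)$. By Lemma~\ref{le:spectral1} (itself a consequence of the uniform bounded distortion in Proposition~\ref{pr:bdd-dist-power}), $\cP^n(1)\le B_0$ uniformly. Dividing by $\phi(x)$ and using that $\phi$ is bounded away from zero on $\Delta$ (Lemma~\ref{le:smooth-dens} and Proposition~\ref{pr:existence-smooth-acim}), we obtain
\begin{align*}
|A_{n+m}(x,t)-A_n(x,t)|\le\frac{B_0\|\phi\|_\infty}{\inf\phi}\,\omega_\psi(\kappa^{-n}D),
\end{align*}
uniformly in $m\ge 0$ and $(x,t)\in\Delta\times I$. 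Since the right-hand side tends to zero with $n$, the sequence is uniformly Cauchy, and therefore uniformly convergent. The main obstacle, already embedded above, is to line up the fiber contraction rate $\kappa^{-n}$ of $\widehat F^n$ with the correct power of $\cP$ via the decomposition $\cP^{n+m}=\cP^n\circ\cP^m$ and the invariance $\cP^m\phi=\phi$; once this pairing is made, uniform continuity of $\psi$ and the spectral bound of $\cP$ on $L^\infty$ do the rest, and no rate (Hölder, etc.) on $\psi$ is required.
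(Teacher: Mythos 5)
Your proof is correct and follows essentially the same strategy as the paper: both decompose $\cP^{n+m}=\cP^n\circ\cP^m$, use $\cP\phi=\phi$ to reproduce the main term, and control the error through fiber contraction of $\widehat F^n$ together with uniform continuity of $\psi$ (the paper phrases this via the oscillation bound \eqref{eq:unif-cont} with a fixed $\epsilon$ and $n_0$, you via the modulus of continuity $\omega_\psi(\kappa^{-n}D)$). The only cosmetic difference is the final constant: bounding $\cP^n(\phi\,|\epsilon_n|)\le\omega_\psi(\kappa^{-n}D)\cP^n(\phi)=\omega_\psi(\kappa^{-n}D)\phi$ would give the Cauchy bound $\omega_\psi(\kappa^{-n}D)$ directly, avoiding the detour through $\cP^n(1)\le B_0$ and the ratio $\|\phi\|_\infty/\inf\phi$.
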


\begin{proof}
  We show that the sequence is a uniform Cauchy
  sequence. Let us fix $\epsilon>0$, then $\psi$ as in the
  statement and take $n_0$ as in \eqref{eq:unif-cont} and
  $n+k>n\ge n_0$. If we fix $(x,t)\in\Delta\times I$, then
  \begin{align*}
    \sum_{h\in\cH_n} 
    \frac{\phi\cdot\psi\circ \widehat F^n_t}{DF^n} \circ h
    &-\hspace{-.3cm}
    \sum_{h\in\cH_{n+k}} \frac{\phi\cdot\psi\circ \widehat
      F^{n+k}_t}{DF^{n+k}} \circ h
    =\hspace{-.3cm}
    \sum_{h\in\cH_n}
    \left(
      \frac{\phi\cdot\psi\circ \widehat F^n_t}{DF^n}
      -\hspace{-.2cm}
      \sum_{\ell\in\cH_{k}}
      \frac{\phi\cdot\psi\circ \widehat
        F^{n+k}_t}{DF^{n+k}} \circ \ell
    \right)\circ h
    \\
    &=
    \sum_{h\in\cH_n}
    \left(
      \frac{\phi\cdot\psi\circ \widehat F^n_t}{DF^n}
      -\hspace{-.2cm}
      \sum_{\ell\in\cH_{k}}
      \frac{\phi\cdot\psi\circ \widehat
        F^{n+k}_t}{DF^{n}\circ F^k \cdot DF^k} \circ \ell
    \right)\circ h
    \\
    &=
    \sum_{h\in\cH_n}
    \frac1{DF^n}\left(
      \phi\cdot\psi\circ \widehat F^n_t
      -\hspace{-.2cm}
      \sum_{\ell\in\cH_{k}}
      \frac{\phi\cdot\psi\circ \widehat
        F^{n+k}_t}{DF^{k}} \circ \ell
    \right)\circ h.
  \end{align*}
  Defining $\Delta_\ell^n:=\psi\circ\widehat
  F^n_t-\psi\circ\widehat F^{n+k}_t\circ \ell$ we can rewrite the
  above as
  \begin{align*}
    \sum_{h\in\cH_n}
    &\frac1{DF^n}\left(
      \phi\cdot\psi\circ \widehat F^n_t
      -\hspace{-.2cm}
      \sum_{\ell\in\cH_{k}}
      \frac{\phi\cdot\psi\circ \widehat
        F^{n}_t}{DF^{k}} \circ \ell
      +\hspace{-.2cm}
      \sum_{\ell\in\cH_{k}}
      \frac{\phi}{DF^{k}} \circ \ell \cdot\Delta_\ell^n
    \right)\circ h
    \\
    &=
    \sum_{h\in\cH_n}
    \frac1{DF^n}\left[
      \psi\circ \widehat F^n_t
      \underbrace{
      \left(\phi-
      \sum_{\ell\in\cH_{k}}
      \frac{\phi}{DF^{k}} \circ \ell
      \right)}_{\phi-\cP(\phi)=0}
      +
      \sum_{\ell\in\cH_{k}}
            \frac{\phi}{DF^{k}} \circ \ell \cdot\Delta_\ell^n
    \right]\circ h
    \\
    &=
    \sum_{h\in\cH_n}
    \frac1{DF^n}\circ h\cdot\left(
      \sum_{\ell\in\cH_{k}}
      \frac{\phi}{DF^{k}} \circ \ell \cdot\Delta_\ell^n\right)
    \circ h
    =
    \sum_{h\in\cH_{n+k}}
      \left(\frac{\phi}{DF^{n+k}}\circ h \right)
      \cdot\left(\Delta_\ell^n\circ F^k\circ h\right)
  \end{align*}
  Using $\phi >0$ and \eqref{eq:unif-cont} it follows that
  absolute value of the last expression is bounded by
  \begin{align*}
    \sum_{h\in\cH_{n+k}}
      \left|\Delta_\ell^n\circ F^k\circ h\right|
      \cdot\left(\frac{\phi}{DF^{n+k}}\circ h \right)
      \le
      \epsilon
      \sum_{h\in\cH_{n+k}}
      \frac{\phi}{DF^{n+k}}\circ h
      =
      \epsilon\phi.
  \end{align*}
  Finally, since $\epsilon>0$ and $(x,t)\in \Delta\times I$
  were arbitrarily chosen the proof is
  complete.
\end{proof}

\begin{proof}[Proof of Proposition~\ref{pr:cLdisint}] We fix
  $n=1$ for definiteness since the general case of $n>1$ is
  completely analogous.  From Lemma~\ref{le:abs-conv} the
  series $\frac1{\phi}\cP^n(\phi\cdot\psi\circ \widehat F^n_t)$ is 
  absolutely convergent.  Hence we can
  exchange the integral and the summation and apply a change
  of variables
    \begin{align*}
    \int  \int\psi\,d(\cL\upsilon)_x (t)\,d\nu(x) 
    & =
    \int_\Delta \int\psi\circ \widehat F \, d\upsilon_x \,d\nu
     =
        \sum_{h\in\cH_1}\int_{h(\Delta)}\int 
    \psi\circ \widehat F_t(x)
    \, d(\upsilon_x)(t) \, d\nu(x)
        \\
    &=
    \sum_{h\in\cH_1}\int_{h(\Delta)}
    \int 
    \phi(x)\cdot\psi\circ \widehat F_t(x)
    \, d\lambda(t) \, d\lambda(x)
    \\
        &=
    \sum_{h\in\cH_1}\int_{h(\Delta)}
    \left(\int 
    \frac{\phi\cdot\psi\circ \widehat F_t}{\phi\circ
      F\cdot DF} (h(F(x))) \, d\lambda(t)
    \right) DF(x)
    \phi(F(x))\, d\lambda(x)
    \\
        &=
    \sum_{h\in\cH_1}\int_{F(h(\Delta))}
    \left(\int 
    \frac{\phi\cdot\psi\circ \widehat F_t}{\phi\circ
      F\cdot DF} (h(x)) \, d\lambda(t)
    \right)
    \phi(x)\, d\lambda(x)
\\
    & =
    \int \sum_{h\in\cH_1}
    \int \frac{\phi\cdot\psi\circ \widehat F_t}{\phi\circ
      F\cdot DF} \circ h \, d\lambda(t)\,d\nu(x)
 \end{align*}
%
 and the statement of the lemma follows from the uniqueness
 of the disintegration.
\end{proof}

At this point we note that
$L(\psi)_x:=\lim_{n\to+\infty}\int\psi\,d(\cL^n\upsilon)_x$
is clearly a normalized, positive and bounded linear functional on
$C^0_0(\Delta_\cQ,\RR)$, thus there exists a probability
measure $\widetilde\omega_x$ such that
$L(\psi)_x=\int\psi\,d\widetilde\omega_x$. Hence, since we
have uniform convergence in Lemma~\ref{le:RPF-conv}
\begin{align*}
  \lim_{n\to+\infty}\int\int\psi\,d(\cL^n\upsilon)_x\,d\nu(x)
  =
  \int\lim_{n\to+\infty}\int\psi\,d(\cL^n\upsilon)_x\,d\nu(x)
  =
  \int\int \psi\,d\widetilde\omega_x\,d\nu(x)
\end{align*}
and because from Lemma~\ref{le.fconvergence} we also have
\begin{align*}
  \lim_{n\to+\infty}\int\int\psi\,d(\cL^n\upsilon)_x\,d\nu(x)
  =
  \int\int\psi\,d\bar\omega_x\,d\nu(x)
\end{align*}
for each continuous function with compact support, the
uniqueness of disintegration ensures that
$\bar\omega_x=\widetilde\omega_x=\lim_{n\to+\infty}(\cL^n\upsilon)_x$
for $\nu$-almost every point $x$.
Since we need to establish the smoothness of the disintegration we
\emph{define} 
$$
\bar\omega_x:=\lim_{n\to+\infty}(\cL^n\upsilon)_x
	\quad \text{ \emph{for all $x\in \Delta$}}
$$
and study in more detail this limit process.

\subsubsection{Disintegration is smooth}
\label{sec:disint-smooth}




We fix $u\in C^1_0(\Delta_\cQ,\RR)$. From
Proposition~\ref{pr:cLdisint} we have
\begin{align*}
  \bar u(x)=
  \int u(x,t)\,d\bar\omega_{x}(t) & = \int \lim_{n\to+\infty}
  \frac1{\phi} \cP^n( \phi \cdot u \circ \widehat F_t^n)(x) \;
  d\lambda(t).
\end{align*}
We shall prove that there is a well defined limit for the
expression of the derivative $ D\left[ \frac1{\phi} \cP^n(
  \phi \cdot u \circ \widehat F_t^n)\right]$ as $n\to\infty$
with uniform bounds independently of $t$.
In fact, by a straightforward computation using the chain rule and 
$|h'(x)|=|DF^n(h(x))|^{-1}$ for every $h\in\cH_n$ we get, for every $x\in \Delta$
\begin{align}
  D\left[ \frac1{\phi(x)} \cP^n( \phi \cdot u \circ \widehat
    F_t^n)(x) \right] 
  & = 
  -\frac{D\phi(x)}{\phi(x)^2} \cP^n( \phi \cdot u \circ \widehat F_t^n)(x)
     \label{eq.uc1}
     \\
     & - \frac1{\phi(x)} \sum_{h\in\cH_n}
     \left(\frac{D^2F^n}{(DF^n)^2}\cdot \frac{\phi\cdot
       u\circ\widehat F^n_t}{ DF^n}\right)\circ h(x)
     \label{eq.uc2}
     \\
     & + \frac1{\phi(x)} \sum_{h\in\cH_n} \left(\frac1{DF^n} \cdot
     \frac{D(\phi\cdot u\circ\widehat F^n_t) }{ DF^n}\right)\circ
     h(x).
     \label{eq.uc3}
\end{align}
Since $|DF^n|\geq \sigma^{-n}=(\sqrt 2)^n$ and $\cP$ is a
positive operator with spectral radius equal to one from
Lemma~\ref{le:spectral1}, we see that the absolute value of
\eqref{eq.uc3} is bounded from above by $ \frac1{\inf \phi}
\sigma^n \|\cP^n(1)\|_\infty \; \|\phi\cdot u\|_{C^1}$, and so
converges to zero.

Moreover, using the consequence \eqref{eq:conseq-renyi} of
the Renyi condition from
Subsection~\ref{sec:renyi-condit-c1}, we deduce that the
absolute value of \eqref{eq.uc2} is bounded from above by $B
n \sigma^{n-1} \frac1{\phi(x)} \cP^n( \phi \cdot u \circ
\widehat F_t^n)(x)$ and so, using Lemma~\ref{le:RPF-conv},
it converges uniformly to zero as $n\to\infty$. Hence, we get that
\begin{align*}
   D\bar u(x) 
     =
    -\frac{D\phi(x)}{\phi(x) }\int \lim_{n\to+\infty}
    \frac1{\phi} \cP^n( \phi \cdot u \circ \widehat F_t^n)
    \; d\lambda(t)
    = -\frac{D\phi(x)}{\phi(x) } \bar u(x)
\end{align*}
and so $D\bar u$ exists, hence $\bar u$ is continuous, thus
by the last identity $D\bar u$ is also continuous.  This
proves that the disintegration $(\bar\omega_x)_x$ is smooth.

\begin{remark}
  \label{rmk:smoothdens}
  The differential equation above has a solution $\bar
  u(x)=\frac1{\phi(x)}+c(t)$. But since
  \begin{align*}
    \int u \,d\eta&= 
    \int \bar u \, d\nu =
    \int (1+c(t)\phi(x))\,d\lambda(x)=1+c(t)
  \end{align*}
 we see that $c(t)\equiv c(u)=\int u\,d\eta-1$.
\end{remark}



\def\cprime{$'$}
\bibliographystyle{abbrv}

\begin{thebibliography}{10}

\bibitem{ABS77}
V.~S. Afraimovich, V.~V. Bykov, and L.~P. Shil{'}nikov.
\newblock {On the appearence and structure of the Lorenz attractor}.
\newblock {\em {Dokl. Acad. Sci. USSR}}, {234}:{336--339}, {1977}.

\bibitem{ABV00}
J.~F. Alves, C.~Bonatti, and M.~Viana.
\newblock {SRB measures for partially hyperbolic systems whose central
  direction is mostly expanding}.
\newblock {\em {Invent. Math.}}, {140}({2}):{351--398}, {2000}.

\bibitem{alves-luzzatto-pinheiro2004}
J.~F. Alves, S.~Luzzatto, and V.~Pinheiro.
\newblock {Lyapunov exponents and rates of mixing for one-dimensional maps}.
\newblock {\em {Ergodic Theory Dynam. Systems}}, {24}({3}):{637--657}, {2004}.

\bibitem{araujo2006a}
V.~Ara{\'u}jo.
\newblock Large deviations bound for semiflows over a non-uniformly expanding
  base.
\newblock {\em Bull. Braz. Math. Soc. (N.S.)}, 38(3):335--376, 2007.

\bibitem{ArGalPac}
V.~Araujo, S.~Galatolo, and M.~J. Pacifico.
\newblock Decay of correlations of maps with uniformly contracting fibers and
  logarithm law for singular hyperbolic attractors.
\newblock {\em In preparation}, 2011.

\bibitem{araujo-pacifico2006}
V.~Ara{\'u}jo and M.~J. Pacifico.
\newblock {Large deviations for non-uniformly expanding maps}.
\newblock {\em {J. Stat. Phys.}}, {125}({2}):{415--457}, {2006}.

\bibitem{AraPac2010}
V.~Ara{\'u}jo and M.~J. Pacifico.
\newblock {\em Three-dimensional flows}, volume~53 of {\em Ergebnisse der
  Mathematik und ihrer Grenzgebiete. 3. Folge. A Series of Modern Surveys in
  Mathematics [Results in Mathematics and Related Areas. 3rd Series. A Series
  of Modern Surveys in Mathematics]}.
\newblock Springer, Heidelberg, 2010.
\newblock With a foreword by Marcelo Viana.

\bibitem{APPV}
V.~Ara{\'u}jo, E.~R. Pujals, M.~J. Pacifico, and M.~Viana.
\newblock Singular-hyperbolic attractors are chaotic.
\newblock {\em Transactions of the A.M.S.}, 361:2431--2485, 2009.

\bibitem{AvGoYoc}
A.~Avila, S.~Gou{\"e}zel, and J.-C. Yoccoz.
\newblock {Exponential mixing for the Teichm{\"u}ller flow}.
\newblock {\em {Publ. Math. Inst. Hautes {\'E}tudes Sci.}}, {104}:{143--211},
  {2006}.

\bibitem{BaVal2005}
V.~Baladi and B.~Vall{\'e}e.
\newblock {Exponential decay of correlations for surface semi-flows without
  finite Markov partitions}.
\newblock {\em {Proc. Amer. Math. Soc.}}, {133}({3}):{865--874}, {2005}.

\bibitem{BPV97}
C.~Bonatti, A.~Pumari{\~n}o, and M.~Viana.
\newblock {Lorenz attractors with arbitrary expanding dimension}.
\newblock {\em {C. R. Acad. Sci. Paris S{\'e}r. I Math.}},
  {325}({8}):{883--888}, {1997}.

\bibitem{Bo75}
R.~Bowen.
\newblock {\em {Equilibrium states and the ergodic theory of Anosov
  diffeomorphisms}}, volume {470} of {\em {Lect. Notes in Math.}}
\newblock {Springer Verlag}, {1975}.

\bibitem{BR75}
R.~Bowen and D.~Ruelle.
\newblock {The ergodic theory of Axiom A flows}.
\newblock {\em {Invent. Math.}}, {29}:{181--202}, {1975}.

\bibitem{bufetov06}
A.~I. Bufetov.
\newblock {Decay of correlations for the Rauzy-Veech-Zorich induction map on
  the space of interval exchange transformations and the central limit theorem
  for the Teichm{\"u}ller flow on the moduli space of abelian differentials}.
\newblock {\em {J. Amer. Math. Soc.}}, {19}({3}):{579--623}, {2006}.

\bibitem{chernov98}
N.~I. Chernov.
\newblock {Markov approximations and decay of correlations for Anosov flows}.
\newblock {\em {Ann. of Math. (2)}}, {147}({2}):{269--324}, {1998}.

\bibitem{CEG84}
P.~Collet, H.~Epstein, and G.~Gallavotti.
\newblock {Perturbations of geodesic flows on surfaces of constant negative
  curvature and their mixing properties}.
\newblock {\em {Comm. Math. Phys.}}, {95}({1}):{61--112}, {1984}.

\bibitem{KDO06}
K.~D{\'i}az-Ordaz.
\newblock {Decay of correlations for non-H{\"o}lder observables for
  one-dimensional expanding Lorenz-like maps}.
\newblock {\em {Discrete Contin. Dyn. Syst.}}, {15}({1}):{159--176}, {2006}.

\bibitem{Do98}
D.~Dolgopyat.
\newblock {On decay of correlations in Anosov flows}.
\newblock {\em {Ann. of Math. (2)}}, {147}({2}):{357--390}, {1998}.

\bibitem{dolgopyat98}
D.~Dolgopyat.
\newblock {Prevalence of rapid mixing in hyperbolic flows}.
\newblock {\em {Ergodic Theory Dynam. Systems}}, {18}({5}):{1097--1114},
  {1998}.

\bibitem{EG92}
L.~C. Evans and R.~F. Gariepy.
\newblock {\em {Measure theory and fine properties of functions}}.
\newblock {Studies in Advanced Mathematics}. {CRC Press}, {1992}.

\bibitem{FMT}
M.~Field, I.~Melbourne, and A.~T{\"o}rok.
\newblock {Stability of mixing and rapid mixing for hyperbolic flows}.
\newblock {\em {Annals of Mathematics}}, {166}:{269{--}291}, {2007}.

\bibitem{galapacif09}
S.~Galatolo and M.~J. Pacifico.
\newblock Lorenz like flows: exponential decay of correlations for the
  poincar{\'e} map, logarithm law, quantitative recurrence.
\newblock {\em Ergodic Theory and Dynamical Systems}, 30:703--1737, Jan. 2010.

\bibitem{gouezel}
S.~Gou{\"e}zel.
\newblock {Decay of correlations for nonuniformly expanding systems}.
\newblock {\em {Bull. Soc. Math. France}}, {134}({1}):{1--31}, {2006}.

\bibitem{GW79}
J.~Guckenheimer and R.~F. Williams.
\newblock {Structural stability of Lorenz attractors}.
\newblock {\em {Publ. Math. IHES}}, {50}:{59--72}, {1979}.

\bibitem{Hartman02}
P.~Hartman.
\newblock {\em Ordinary differential equations}, volume~38 of {\em Classics in
  Applied Mathematics}.
\newblock Society for Industrial and Applied Mathematics (SIAM), Philadelphia,
  PA, 2002.
\newblock Corrected reprint of the second (1982) edition [Birkh{\"a}user,
  Boston, MA; MR0658490 (83e:34002)], With a foreword by Peter Bates.

\bibitem{HPS77}
M.~Hirsch, C.~Pugh, and M.~Shub.
\newblock {\em {Invariant manifolds}}, volume {583} of {\em {Lect. Notes in
  Math.}}
\newblock {Springer Verlag}, {New York}, {1977}.

\bibitem{HoMel}
M.~Holland and I.~Melbourne.
\newblock Central limit theorems and invariance principles for {L}orenz
  attractors.
\newblock {\em J. Lond. Math. Soc. (2)}, 76(2):345--364, 2007.

\bibitem{liverani2004}
C.~Liverani.
\newblock {On contact Anosov flows}.
\newblock {\em {Ann. of Math. (2)}}, {159}({3}):{1275--1312}, {2004}.

\bibitem{Lo63}
E.~N. Lorenz.
\newblock {Deterministic nonperiodic flow}.
\newblock {\em {J. Atmosph. Sci.}}, {20}:{130--141}, {1963}.

\bibitem{LMP05}
S.~Luzzatto, I.~Melbourne, and F.~Paccaut.
\newblock {The Lorenz attractor is mixing}.
\newblock {\em {Comm. Math. Phys.}}, {260}({2}):{393--401}, {2005}.

\bibitem{MelTor02}
I.~Melbourne and A.~T{\"o}r{\"o}k.
\newblock Central limit theorems and invariance principles for time-one maps of
  hyperbolic flows.
\newblock {\em Comm. Math. Phys.}, 229(1):57--71, 2002.

\bibitem{MeTo04}
I.~Melbourne and A.~T{\"o}r{\"o}k.
\newblock Statistical limit theorems for suspension flows.
\newblock {\em Israel J. Math.}, 144:191--209, 2004.

\bibitem{MeMor06}
R.~Metzger and C.~Morales.
\newblock Sectional-hyperbolic systems.
\newblock {\em Ergodic Theory and Dynamical System}, 28:1587--1597, 2008.

\bibitem{MPP04}
C.~A. Morales, M.~J. Pacifico, and E.~R. Pujals.
\newblock {Robust transitive singular sets for 3-flows are partially hyperbolic
  attractors or repellers}.
\newblock {\em {Ann. of Math. (2)}}, {160}({2}):{375--432}, {2004}.

\bibitem{PT09}
M.~J. Pacifico and M.~Todd.
\newblock Thermodynamic formalism for contracting {L}orenz flows.
\newblock {\em {Journal of Statistical Physics}}, {139}(1):{159--176}, 2010.

\bibitem{Pol85}
M.~Pollicott.
\newblock {On the rate of mixing of Axiom A flows}.
\newblock {\em {Invent. Math.}}, {81}({3}):{413--426}, {1985}.

\bibitem{pollicott92}
M.~Pollicott.
\newblock {Exponential mixing for the geodesic flow on hyperbolic
  three-manifolds}.
\newblock {\em {J. Statist. Phys.}}, {67}({3-4}):{667--673}, {1992}.

\bibitem{pollicott99}
M.~Pollicott.
\newblock {On the mixing of Axiom A attracting flows and a conjecture of
  Ruelle}.
\newblock {\em {Ergodic Theory Dynam. Systems}}, {19}({2}):{535--548}, {1999}.

\bibitem{PuSh71}
C.~Pugh and M.~Shub.
\newblock Ergodic elements of ergodic actions.
\newblock {\em Compositio Math.}, 23:115--122, 1971.

\bibitem{Re57}
A.~Renyi.
\newblock {Representations for real numbers and their ergodic properties}.
\newblock {\em {Acta Math. Acad. Sci. Hungar.}}, {8}:{477--493}, {1957}.

\bibitem{Ro62}
V.~A. Rokhlin.
\newblock {On the fundamental ideas of measure theory}.
\newblock {\em {A. M. S. Transl.}}, {10}:{1--52}, {1962}.
\newblock {Transl. from Mat. Sbornik 25 (1949), 107{--}150.}

\bibitem{Ro93}
A.~Rovella.
\newblock {The dynamics of perturbations of the contracting Lorenz attractor}.
\newblock {\em {Bull. Braz. Math. Soc.}}, {24}({2}):{233--259}, {1993}.

\bibitem{Ru76}
D.~Ruelle.
\newblock {A measure associated with Axiom A attractors}.
\newblock {\em {Amer. J. Math.}}, {98}:{619--654}, {1976}.

\bibitem{ruelle1983}
D.~Ruelle.
\newblock {Flots qui ne m{\'e}langent pas exponentiellement}.
\newblock {\em {C. R. Acad. Sci. Paris S{\'e}r. I Math.}},
  {296}({4}):{191--193}, {1983}.

\bibitem{Si72}
Y.~Sinai.
\newblock {Gibbs measures in ergodic theory}.
\newblock {\em {Russian Math. Surveys}}, {27}:{21--69}, {1972}.

\bibitem{Tu99}
W.~Tucker.
\newblock {The Lorenz attractor exists}.
\newblock {\em {C. R. Acad. Sci. Paris}}, {328, S{\'e}rie I}:{1197--1202},
  {1999}.

\bibitem{Va09}
P.~Varandas.
\newblock {Non-uniform specification and large deviations for weak Gibbs
  measures}.
\newblock {\em ArXiv e-prints}, June 2009.

\end{thebibliography}

\end{document}